\let\mathcaltmp\mathcal
\let\mathcal\mathscr
\let\mathscr\mathcaltmp
\def\thm@space@setup{\thm@preskip=7pt
\thm@postskip=7pt}
\newtheoremstyle{plain}
  {}
  {}
  {\slshape}
  {}
  {\bfseries}
  {.}
  { }
  {}
\newtheoremstyle{definition}
  {}
  {}
  {}
  {}
  {\bfseries}
  {.}
  { }
  {}
\renewenvironment{proof}[1][\proofname]{\par
  \pushQED{\qed}%
  \normalfont \topsep0\p@\relax
  \trivlist
  \item[\hskip\labelsep\itshape
  #1\@addpunct{.}]\ignorespaces
}{%
  \popQED\endtrivlist\@endpefalse
}
\newcommand{\eqnum}{\refstepcounter{equation}\textup{\tagform@{\theequation}}}
\makeatletter \@addtoreset{equation}{section} \makeatother
\renewcommand{\theequation}{\thesection.\arabic{equation}}
\newtheorem{thm}[equation]{Theorem}
\newtheorem{thmX}{Theorem}
\newtheorem{lem}[equation]{Lemma}
\newtheorem{cor}[equation]{Corollary}
\newtheorem{prop}[equation]{Proposition}
\newtheorem*{defthm*}{Definition/Theorem}
\theoremstyle{definition}
\newtheorem{defn}[equation]{Definition}
\newtheorem{rem}[equation]{Remark}
\newtheorem{exam}[equation]{Example}
\newtheorem{notat}[equation]{Notation}
\newtheorem*{exam*}{Example}
\newcommand\arXiv[1]{\href{http://arxiv.org/abs/#1}{arXiv:#1}}
\newcommand{\changelocaltocdepth}[1]{%
  \addtocontents{toc}{\protect\setcounter{tocdepth}{#1}}%
  \setcounter{tocdepth}{#1}}
\newcommand{\nc}{\newcommand}
\nc{\renc}{\renewcommand}
\nc{\ssec}{\subsection}
\nc{\sssec}{\subsubsection}
\nc{\on}{\operatorname}
\nc{\term}[1]{#1\xspace}
\tikzset{
  commutative diagrams/.cd,
  arrow style=tikz,
  diagrams={>=latex}}
\tikzset{
  column sep/.code=\def\pgfmatrixcolumnsep{\pgf@matrix@xscale*(#1)},
  row sep/.code   =\def\pgfmatrixrowsep{\pgf@matrix@yscale*(#1)},
  matrix xscale/.code=%
    \pgfmathsetmacro\pgf@matrix@xscale{\pgf@matrix@xscale*(#1)},
  matrix yscale/.code=%
    \pgfmathsetmacro\pgf@matrix@yscale{\pgf@matrix@yscale*(#1)},
  matrix scale/.style={/tikz/matrix xscale={#1},/tikz/matrix yscale={#1}}}
\def\pgf@matrix@xscale{1}
\def\pgf@matrix@yscale{1}
\setlist[enumerate,1]{label={(\alph*)},itemsep=\parskip}
\newlist{thmlist}{enumerate}{1}
\setlist[thmlist,1]{
  label={\em(\roman*)}, ref={(\roman*)},
  itemsep=0.5em,
  topsep=0em,
  leftmargin=*,
  align=left,widest=vi)}
\newlist{thmlistbis}{enumerate}{1}
\setlist[thmlistbis,1]{
  label={\em(\roman*~\textit{bis})},
  ref={(\roman*}~\textit{bis}\upshape{)},
  itemsep=0.5em,
  topsep=-0.7em,
  leftmargin=0pt, align=right, widest=vi)}
\newlist{defnlist}{enumerate}{2}
\setlist[defnlist,1]{
  label={(\roman*)}, ref={(\roman*)},
  itemsep=0.5em,
  topsep=0em,
  leftmargin=*,
  align=left, widest=vi)}
\setlist[defnlist,2]{
  label={(\alph*)}, ref={(\alph*)},
  itemsep=0.75em,
  labelsep=0em,labelindent=0em,leftmargin=*,align=left,widest=vi),
  topsep=0.75em}
\newlist{defnlistbis}{enumerate}{1}
\setlist[defnlistbis,1]{
  label={(\roman*~\textit{bis})},
  ref={(\roman*}~\textit{bis}\upshape{)},
  itemsep=0.5em,
  topsep=0em,
  leftmargin=*,
  align=left, widest=vi)}
\newlist{inlinelist}{enumerate*}{1}
\setlist[inlinelist,1]{label={(\alph*)}}
\newlist{inlinedefnlist}{enumerate*}{1}
\definecolor{green}{HTML}{38550C}
\setlist[inlinedefnlist,1]{label={\color{green}(\roman*)}}
\newlist{inlinethmlist}{enumerate*}{1}
\definecolor{green}{HTML}{38550C}
\setlist[inlinethmlist,1]{label={\color{green}(\roman*)}}
\nc{\cA}{\ensuremath{\mathcal{A}}\xspace}
\nc{\cB}{\ensuremath{\mathcal{B}}\xspace}
\nc{\cC}{\ensuremath{\mathcal{C}}\xspace}
\nc{\cD}{\ensuremath{\mathcal{D}}\xspace}
\nc{\cE}{\ensuremath{\mathcal{E}}\xspace}
\nc{\cF}{\ensuremath{\mathcal{F}}\xspace}
\nc{\cG}{\ensuremath{\mathcal{G}}\xspace}
\nc{\cH}{\ensuremath{\mathcal{H}}\xspace}
\nc{\cI}{\ensuremath{\mathcal{I}}\xspace}
\nc{\cJ}{\ensuremath{\mathcal{J}}\xspace}
\nc{\cK}{\ensuremath{\mathcal{K}}\xspace}
\nc{\cL}{\ensuremath{\mathcal{L}}\xspace}
\nc{\cM}{\ensuremath{\mathcal{M}}\xspace}
\nc{\cN}{\ensuremath{\mathcal{N}}\xspace}
\nc{\cO}{\ensuremath{\mathcal{O}}\xspace}
\nc{\cP}{\ensuremath{\mathcal{P}}\xspace}
\nc{\cQ}{\ensuremath{\mathcal{Q}}\xspace}
\nc{\cR}{\ensuremath{\mathcal{R}}\xspace}
\nc{\cS}{\ensuremath{\mathcal{S}}\xspace}
\nc{\cT}{\ensuremath{\mathcal{T}}\xspace}
\nc{\cU}{\ensuremath{\mathcal{U}}\xspace}
\nc{\cV}{\ensuremath{\mathcal{V}}\xspace}
\nc{\cW}{\ensuremath{\mathcal{W}}\xspace}
\nc{\cX}{\ensuremath{\mathcal{X}}\xspace}
\nc{\cY}{\ensuremath{\mathcal{Y}}\xspace}
\nc{\cZ}{\ensuremath{\mathcal{Z}}\xspace}
\nc{\sA}{\ensuremath{\mathscr{A}}\xspace}
\nc{\sB}{\ensuremath{\mathscr{B}}\xspace}
\nc{\sC}{\ensuremath{\mathscr{C}}\xspace}
\nc{\sD}{\ensuremath{\mathscr{D}}\xspace}
\nc{\sE}{\ensuremath{\mathscr{E}}\xspace}
\nc{\sF}{\ensuremath{\mathscr{F}}\xspace}
\nc{\sG}{\ensuremath{\mathscr{G}}\xspace}
\nc{\sH}{\ensuremath{\mathscr{H}}\xspace}
\nc{\sI}{\ensuremath{\mathscr{I}}\xspace}
\nc{\sJ}{\ensuremath{\mathscr{J}}\xspace}
\nc{\sK}{\ensuremath{\mathscr{K}}\xspace}
\nc{\sL}{\ensuremath{\mathscr{L}}\xspace}
\nc{\sM}{\ensuremath{\mathscr{M}}\xspace}
\nc{\sN}{\ensuremath{\mathscr{N}}\xspace}
\nc{\sO}{\ensuremath{\mathscr{O}}\xspace}
\nc{\sP}{\ensuremath{\mathscr{P}}\xspace}
\nc{\sQ}{\ensuremath{\mathscr{Q}}\xspace}
\nc{\sR}{\ensuremath{\mathscr{R}}\xspace}
\nc{\sS}{\ensuremath{\mathscr{S}}\xspace}
\nc{\sT}{\ensuremath{\mathscr{T}}\xspace}
\nc{\sU}{\ensuremath{\mathscr{U}}\xspace}
\nc{\sV}{\ensuremath{\mathscr{V}}\xspace}
\nc{\sW}{\ensuremath{\mathscr{W}}\xspace}
\nc{\sX}{\ensuremath{\mathscr{X}}\xspace}
\nc{\sY}{\ensuremath{\mathscr{Y}}\xspace}
\nc{\sZ}{\ensuremath{\mathscr{Z}}\xspace}
\nc{\bA}{\ensuremath{\mathbf{A}}\xspace}
\nc{\bB}{\ensuremath{\mathbf{B}}\xspace}
\nc{\bC}{\ensuremath{\mathbf{C}}\xspace}
\nc{\bD}{\ensuremath{\mathbf{D}}\xspace}
\nc{\bE}{\ensuremath{\mathbf{E}}\xspace}
\nc{\bF}{\ensuremath{\mathbf{F}}\xspace}
\nc{\bG}{\ensuremath{\mathbf{G}}\xspace}
\nc{\bH}{\ensuremath{\mathbf{H}}\xspace}
\nc{\bI}{\ensuremath{\mathbf{I}}\xspace}
\nc{\bJ}{\ensuremath{\mathbf{J}}\xspace}
\nc{\bK}{\ensuremath{\mathbf{K}}\xspace}
\nc{\bL}{\ensuremath{\mathbf{L}}\xspace}
\nc{\bM}{\ensuremath{\mathbf{M}}\xspace}
\nc{\bN}{\ensuremath{\mathbf{N}}\xspace}
\nc{\bO}{\ensuremath{\mathbf{O}}\xspace}
\nc{\bP}{\ensuremath{\mathbf{P}}\xspace}
\nc{\bQ}{\ensuremath{\mathbf{Q}}\xspace}
\nc{\bR}{\ensuremath{\mathbf{R}}\xspace}
\nc{\bS}{\ensuremath{\mathbf{S}}\xspace}
\nc{\bT}{\ensuremath{\mathbf{T}}\xspace}
\nc{\bU}{\ensuremath{\mathbf{U}}\xspace}
\nc{\bV}{\ensuremath{\mathbf{V}}\xspace}
\nc{\bW}{\ensuremath{\mathbf{W}}\xspace}
\nc{\bX}{\ensuremath{\mathbf{X}}\xspace}
\nc{\bY}{\ensuremath{\mathbf{Y}}\xspace}
\nc{\bZ}{\ensuremath{\mathbf{Z}}\xspace}
\nc{\bbA}{\ensuremath{\mathbb{A}}\xspace}
\nc{\bbB}{\ensuremath{\mathbb{B}}\xspace}
\nc{\bbC}{\ensuremath{\mathbb{C}}\xspace}
\nc{\bbD}{\ensuremath{\mathbb{D}}\xspace}
\nc{\bbE}{\ensuremath{\mathbb{E}}\xspace}
\nc{\bbF}{\ensuremath{\mathbb{F}}\xspace}
\nc{\bbG}{\ensuremath{\mathbb{G}}\xspace}
\nc{\bbH}{\ensuremath{\mathbb{H}}\xspace}
\nc{\bbI}{\ensuremath{\mathbb{I}}\xspace}
\nc{\bbJ}{\ensuremath{\mathbb{J}}\xspace}
\nc{\bbK}{\ensuremath{\mathbb{K}}\xspace}
\nc{\bbL}{\ensuremath{\mathbb{L}}\xspace}
\nc{\bbM}{\ensuremath{\mathbb{M}}\xspace}
\nc{\bbN}{\ensuremath{\mathbb{N}}\xspace}
\nc{\bbO}{\ensuremath{\mathbb{O}}\xspace}
\nc{\bbP}{\ensuremath{\mathbb{P}}\xspace}
\nc{\bbQ}{\ensuremath{\mathbb{Q}}\xspace}
\nc{\bbR}{\ensuremath{\mathbb{R}}\xspace}
\nc{\bbS}{\ensuremath{\mathbb{S}}\xspace}
\nc{\bbT}{\ensuremath{\mathbb{T}}\xspace}
\nc{\bbU}{\ensuremath{\mathbb{U}}\xspace}
\nc{\bbV}{\ensuremath{\mathbb{V}}\xspace}
\nc{\bbW}{\ensuremath{\mathbb{W}}\xspace}
\nc{\bbX}{\ensuremath{\mathbb{X}}\xspace}
\nc{\bbY}{\ensuremath{\mathbb{Y}}\xspace}
\nc{\bbZ}{\ensuremath{\mathbb{Z}}\xspace}
\nc{\mrm}[1]{\ensuremath{\mathrm{#1}}\xspace}
\nc{\mit}[1]{\ensuremath{\mathit{#1}}\xspace}
\nc{\mbf}[1]{\ensuremath{\mathbf{#1}}\xspace}
\nc{\mcal}[1]{\ensuremath{\mathcal{#1}}\xspace}
\nc{\msc}[1]{\ensuremath{\mathscr{#1}}\xspace}
\nc{\mfr}[1]{\ensuremath{\mathfrak{#1}}\xspace}
\nc{\sub}{\subseteq}
\nc{\too}{\longrightarrow}
\nc{\hook}{\hookrightarrow}
\nc{\hooklongrightarrow}{\lhook\joinrel\longrightarrow}
\nc{\hooklong}{\hooklongrightarrow}
\nc{\hooklongleftarrow}{\longleftarrow\joinrel\rhook}
\nc{\twoheadlongrightarrow}{\relbar\joinrel\twoheadrightarrow}
\nc{\longrightleftarrows}{\ \raisebox{0.3ex}{\(\mathrel{\substack{\xrightarrow{\rule{1em}{0em}} \\[-1ex] \xleftarrow{\rule{1em}{0em}}}}\)}\ }
\renc{\ge}{\geqslant}
\renc{\le}{\leqslant}
\nc{\id}{\mathrm{id}}
\DeclareMathOperator{\Hom}{\on{Hom}}
\nc{\uHom}{\underline{\smash{\Hom}}}
\DeclareMathOperator{\Maps}{\on{Maps}}
\DeclareMathOperator{\End}{\on{End}}
\nc{\uEnd}{\underline{\smash{\End}}}
\DeclareMathOperator{\codim}{\on{codim}}
\nc{\colim}{\varinjlim}
\renc{\lim}{\varprojlim}
\nc{\Cofib}{\on{Cofib}}
\nc{\Fib}{\on{Fib}}
\nc{\initial}{\varnothing}
\nc{\op}{\mathrm{op}}
\DeclareMathOperator*{\fibprod}{\times}
\renc{\setminus}{\smallsetminus}
\DeclarePairedDelimiter\abs{\lvert}{\rvert}%
\newcommand{\thmref}[1]{Theorem~\ref{#1}}
\newcommand{\secref}[1]{Sect.~\ref{#1}}
\newcommand{\ssecref}[1]{Subsect. ~\ref{#1}}
\newcommand{\sssecref}[1]{(\ref{#1})}
\newcommand{\lemref}[1]{Lemma~\ref{#1}}
\newcommand{\propref}[1]{Proposition~\ref{#1}}
\newcommand{\corref}[1]{Corollary~\ref{#1}}
\newcommand{\remref}[1]{Remark~\ref{#1}}
\newcommand{\defnref}[1]{Definition~\ref{#1}}
\renewcommand{\eqref}[1]{(\ref{#1})}
\newcommand{\notatref}[1]{Notation~\ref{#1}}
\newcommand{\itemref}[1]{\ref{#1}}
\nc{\A}{\bA}
\renc{\P}{\bP}
\nc{\V}{\bV}
\nc{\Spec}{\on{Spec}}
\nc{\D}{\on{\mbf{D}}}
\nc{\rD}{\on{\mrm{D}}}
\nc{\Dqc}{\on{\mbf{D}}_{\mrm{qc}}}
\nc{\bDelta}{\mathbf{\Delta}}
\nc{\Cech}{\textnormal{\v{C}}}
\nc{\Dperf}{\on{\mbf{D}}_{\mrm{perf}}}
\nc{\Coh}{\on{Coh}}
\nc{\Qcoh}{\on{Qcoh}}
\nc{\Dcoh}{\on{\mbf{D}}_{\mrm{coh}}}
\nc{\cl}{{\mrm{cl}}}
\nc{\Bl}{\on{Bl}}
\nc{\vir}{\mrm{vir}}
\nc{\Zar}{\mrm{Zar}}
\nc{\et}{\mrm{\acute{e}t}}
\nc{\Nis}{\mrm{Nis}}
\renc{\H}{\on{H}}
\nc{\BM}{\mrm{BM}}
\nc{\Z}{\bZ}
\nc{\Q}{\bQ}
\nc{\K}{{\on{K}}}
\nc{\G}{{\on{G}}}
\nc{\KH}{\mrm{KH}}
\nc{\KGL}{\mrm{KGL}}
\nc{\Einfty}{{\sE_\infty}}
\renc{\sp}{\mrm{sp}}
\nc{\Td}{\on{Td}}
\nc{\ch}{\on{ch}}
\nc{\RGamma}{R\Gamma}
\nc{\red}{\mrm{red}}
\nc{\der}{{\mrm{der}}}
\nc{\Mod}{{\mrm{Mod}}}
\nc{\Gr}{{\on{Gr}}}
\nc{\Ind}{\on{Ind}}
\nc{\Pro}{\on{Pro}}
\nc{\dash}{{\textnormal{-}}}
\nc{\InftyCat}{\infty\dash\mrm{Cat}}
\nc{\Pres}{\mrm{Pres}}
\nc{\form}{\widehat}
\nc{\R}{\bR}
\nc{\otimesL}{\mathchoice{\overset{\bL}{\otimes}}{\otimes^\bL}{\otimes^\bL}{\otimes^\bL}}
\nc{\fibprodR}{\fibprod^\bR}
\nc{\uRHom}{\bR\uHom}
\nc{\GL}{\mrm{GL}}
\nc{\SW}{\on{SW}}
\nc{\Vect}{\on{Vect}}
\nc{\Fun}{\on{Fun}}
\nc{\Nat}{\on{Nat}}
\nc{\un}{\mbf{1}}
\nc{\pr}{\mrm{pr}}
\nc{\pt}{\mrm{pt}}
\nc{\vb}[1]{\langle{#1}\rangle}
\nc{\Pt}{\on{Pt}}
\nc{\lisse}{{\triangleleft}}
\nc{\Lis}{\mrm{Lis}}
\nc{\LisStk}{\mrm{LisStk}}
\nc{\Et}{{\mrm{Et}}}
\nc{\aff}{\mrm{aff}}
\nc{\qproj}{\mrm{qproj}}
\nc{\fp}{\mrm{fp}}
\nc{\ft}{\mrm{ft}}
\nc{\affft}{\mrm{affft}}
\nc{\sm}{\mrm{sm}}
\nc{\lci}{\mrm{lci}}
\nc{\Lisftsm}{\Lis^{\ft:\sm}}
\nc{\Lisaffftsm}{\Lis^{\affft:\sm}}
\nc{\Lisaspftsm}{\Lis^{\mrm{aspft}:\sm}}
\renc{\top}{\mrm{top}}
\nc{\C}{\on{C}}
\nc{\Chom}{\mrm{C}_\bullet}
\nc{\Ccoh}{\mrm{C}^\bullet}
\nc{\Ccohc}{\mrm{C}_{\mrm{c}}^\bullet}
\nc{\CBM}{\mrm{C}^{\BM}_\bullet}
\nc{\mot}{\mrm{mot}}
\nc{\Chommot}{\mrm{C}^{\mot}_\bullet}
\nc{\Top}{\mrm{Top}}
\renc{\top}{\mrm{top}}
\nc{\Spc}{\mrm{Spc}}
\nc{\Stk}{\mrm{Stk}}
\nc{\Art}{\mrm{Art}}
\nc{\Shv}{\on{Shv}}
\nc{\Spt}{\mrm{Spt}}
\nc{\heart}{\heartsuit}
\nc{\an}{\mrm{an}}
\nc{\Anima}{\mbf{H}}
\nc{\Aff}{\mrm{Aff}}
\nc{\MotSpc}{{\mrm{MAni}}}
\nc{\SH}{\on{\mathbf{SH}}}
\renc{\L}{\mrm{\bL}}
\nc{\h}{\mrm{h}}
\nc{\Sm}{\mrm{Sm}}
\nc{\Sch}{\mrm{Sch}}
\nc{\Asp}{\mrm{Asp}}
\nc{\Betti}{\mrm{Bet}}
\nc{\cdh}{\mrm{cdh}}
\renc{\Re}{\mrm{Re}}
\nc{\bz}{\mathbf{z}}
\nc{\Tot}{\on{Tot}}
\nc{\MGL}{\mrm{MGL}}
\nc{\inv}[1]{[\tfrac{1}{#1}]}
\nc{\scr}{\term{derived commutative ring}}
\nc{\scrs}{\term{derived commutative rings}}
\nc{\inftyCat}{\term{$\infty$-category}}
\nc{\inftyCats}{\term{$\infty$-categories}}
\nc{\inftyGrpd}{\term{$\infty$-groupoid}}
\nc{\inftyGrpds}{\term{$\infty$-groupoids}}
\nc{\dA}{\term{derived Artin}}
\title{Equivariant generalized cohomology via~stacks\vspace{-2mm}}
\author[A.\,A. Khan]{Adeel A. Khan}
\author[C. Ravi]{Charanya Ravi}
\date{2025-09-25}
\def\l@subsection{\@tocline{2}{0pt}{4pc}{6pc}{}}
\begin{document}

\begin{abstract}
  We prove a general form of the statement that the cohomology of a quotient stack can be computed by the Borel construction.
  It also applies to the lisse extensions of generalized cohomology theories like motivic cohomology and algebraic cobordism.
  We use this to prove a (higher) equivariant Grothendieck--Riemann--Roch theorem, comparing Borel-equivariant G-theory and equivariant Chow groups.
  Finally, we give a Bernstein--Lunts-type gluing description of the \inftyCat of equivariant sheaves on a scheme $X$, in terms of nonequivariant sheaves on $X$ and sheaves on its Borel construction.
  \vspace{-5mm}
\end{abstract}

\maketitle

\renewcommand\contentsname{\vspace{-1cm}}
\tableofcontents

\setlength{\parindent}{0em}
\parskip 0.6em

\thispagestyle{empty}


\changelocaltocdepth{1}


\section*{Introduction}

  The formalism of Grothendieck's six operations on (derived categories of) étale sheaves can be extended to algebraic stacks (see \cite{LiuZheng,LaszloOlssonI}).
  Specialized to quotient stacks, this affords simple definitions of equivariant (co)homology.
  For example, if $X$ is a variety with an action of an algebraic group $G$, we may define the $G$-equivariant Borel--Moore homology of $X$ (with coefficients in a commutative ring $\Lambda$) as the hypercohomology of the complex $f^!(\Lambda_{BG})$, where $\Lambda_{BG}$ is the constant sheaf on the classifying stack $BG$ and $f : [X/G] \to BG$ is the projection from the quotient stack.

  Classically, equivariant cohomology and (Borel--Moore) homology are defined via algebraic analogues of the Borel construction \cite{BorelSeminar}.
  It is well-known that the two approaches should agree.
  In this paper, we prove a very general form of such a comparison that also applies to generalized motivic cohomology theories, and study the consequences for equivariant intersection theory.

  On first pass, let us formulate the result for Betti or $\ell$-adic sheaves.
  Given a locally of finite type Artin stack $X$ over a field $k$, let $\D(X)$ denote the \inftyCat of Betti or $\ell$-adic sheaves on $X$.
  If $G$ is an affine algebraic group over $k$ acting on $X$, let $\H^*_G(X)$ and $\H^{\BM,G}_{-*}(X)$ denote the hypercohomology groups of $f_*f^*(\Lambda_{BG})$ and $f_*f^!(\Lambda_{BG})$, respectively (where $\Lambda$ denotes the constant sheaf $\bQ$ or $\bQ_\ell$, in the Betti or $\ell$-adic cases respectively).
  We regard the Borel construction as a $G$-equivariant ind-scheme $U_\infty = \{U_\nu\}_\nu$, e.g. for $G=\GL_n$ this is the infinite Grassmannian $\mrm{Gr}_{n,\infty}$ of rank $n$ subspaces (see \secref{sec:borel} for details).
  Then we claim (see Corollaries~\ref{cor:lumberjack} and \ref{cor:tuberculation}):

  \begin{thmX}\label{thmX:homology}
    Suppose $G$ acts on a finite-dimensional Artin stack $X$ of finite type over $k$.
    Then for every integer $n\in\Z$ there are canonical isomorphisms
    \begin{align*}
      \H^n_G(X)
      &\simeq \H^n(X \fibprod^G U_\infty),\\
      \H_n^{\BM,G}(X)
      &\simeq \H^\BM_{n}(X \fibprod^G U_\infty).
    \end{align*}
  \end{thmX}

  Here we have written $X \fibprod^G U_\infty$ for the quotient ind-stack $[(X\fibprod U_\infty)/G]$, and we have (essentially by definition)
  \begin{align*}
    \H^n(X \fibprod^G U_\infty)
    &\simeq \lim_\nu \H^n(X \fibprod^G U_\nu),\\
    \H^\BM_{n}(X \fibprod^G U_\infty)
    &\simeq \lim_\nu \H^\BM_{n+2d_\nu}(X \fibprod^G U_\nu)(-d_\nu),
  \end{align*}
  where $d_\nu=\dim(U_\nu/G)$.

  For $X$ a quasi-projective $k$-scheme on which $G$ acts linearly, each $X \fibprod^G U_\nu$ is a scheme because $G$ acts freely on $U_\nu$.
  The right-hand sides of \thmref{thmX:homology} are often taken as \emph{definitions} of equivariant cohomology and Borel--Moore homology (see e.g. \cite[\S 1]{Lusztig}).
  They have also been considered in the case where $X$ itself is a stack, e.g. the moduli stack of objects in an abelian or dg-category (see \cite[\S 2.3]{Joyce}).

  Our second main result provides a complete description of the \inftyCat of sheaves on a quotient stack.
  It relates $\D([X/G])$ to the equivariant derived category of Bernstein and Lunts (compare \cite[Def.~2.1.3]{BernsteinLunts}).

  \begin{thmX}\label{thmX:cat}
    For every Artin stack $X$ locally of finite type over $k$ with $G$-action, there is a cartesian square of \inftyCats
    \[\begin{tikzcd}
      \D([X/G]) \ar{r}\ar{d}
      & \D(X \fibprod^G U_\infty) \ar{d}
      \\
      \D(X) \ar{r}
      & \D(X \times U_\infty).
    \end{tikzcd}\]
  \end{thmX}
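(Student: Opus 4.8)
The plan is to exhibit the square as the image under $\D(-)$ of a cartesian square of (ind-)stacks and then verify cartesianness using the acyclicity of the Borel construction. Write $a\colon X\to[X/G]$ and $a'\colon X \fibprod U_\infty\to X \fibprod^G U_\infty$ for the canonical $G$-torsors, and $p\colon X \fibprod^G U_\infty\to[X/G]$, $p'\colon X \fibprod U_\infty\to X$ for the two projections forgetting the Borel coordinate. Pulling back the quotient map $X \fibprod^G U_\infty\to[X/G]$ along the $G$-torsor $a$ recovers $p'$, so the square
\[\begin{tikzcd}
X \fibprod U_\infty \ar{r}\ar{d} & X \ar{d}\\
X \fibprod^G U_\infty \ar{r} & {[X/G]}
\end{tikzcd}\]
(top $p'$, bottom $p$, left $a'$, right $a$) is cartesian, and applying $\D(-)$ with $*$-pullbacks gives back the square of the statement; it therefore commutes by functoriality of pullback and yields a comparison functor $\Phi\colon\D([X/G])\to\cC$, where $\cC:=\D(X)\fibprod_{\D(X \fibprod U_\infty)}\D(X \fibprod^G U_\infty)$. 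It remains to prove $\Phi$ is an equivalence.

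The only geometric input is that $p^*$ and $p'^*$ are fully faithful. For $p'$ this is the assertion that the unit $\id\to p'_*p'^*$ is an equivalence; by the projection formula and base change it reduces to the relative acyclicity of the Borel construction, which amounts to $\RGamma(U_\infty,\Lambda)\simeq\Lambda$ and is exactly what \secref{sec:borel} supplies (the finite approximations $U_\nu$ are $n_\nu$-acyclic with $n_\nu\to\infty$). For $p$ one applies $a^*$, which is conservative because $a$ is faithfully flat, and invokes base change $a^*p_*\simeq p'_*a'^*$ across the cartesian square above to reduce the statement for $p$ to that for $p'$.

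Granting this, the remainder is formal. For essential surjectivity, let $(\cF,\cG,\alpha)\in\cC$ with $\alpha\colon p'^*\cF\xrightarrow{\sim}a'^*\cG$, and set $\cH:=p_*\cG$. Applying the conservative functor $a'^*$ and base change identifies the counit $p^*p_*\cG\to\cG$ with the counit $p'^*p'_*(a'^*\cG)\to a'^*\cG$, which is an equivalence because $a'^*\cG\simeq p'^*\cF$ lies in the essential image of the fully faithful $p'^*$; hence $p^*\cH\simeq\cG$. Likewise $a^*\cH\simeq p'_*a'^*\cG\simeq p'_*p'^*\cF\simeq\cF$, compatibly with $\alpha$, so $\Phi(\cH)\simeq(\cF,\cG,\alpha)$. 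For full faithfulness of $\Phi$ one computes mapping spaces in the pullback $\cC$: the $\D(X \fibprod^G U_\infty)$-factor $\on{Map}(p^*\cH_1,p^*\cH_2)$ is $\on{Map}_{\D([X/G])}(\cH_1,\cH_2)$ since $p^*$ is fully faithful, while the structure map from the $\D(X)$-factor into the $\D(X \fibprod U_\infty)$-factor is an equivalence since $p'^*$ is fully faithful, so the fiber product of mapping spaces collapses onto $\on{Map}_{\D([X/G])}(\cH_1,\cH_2)$, and this collapse is induced by $\Phi$.

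The step I expect to be the main obstacle is the full faithfulness of $p^*$ and $p'^*$: this is where the acyclicity of $U_\infty$ is used, and it is an essentially infinite-dimensional input, as the analogous squares with a single $U_\nu$ in place of $U_\infty$ are not cartesian in general (precisely because $\RGamma(U_\nu,\Lambda)\not\simeq\Lambda$). The remaining difficulties are infrastructural: one needs the six-functor formalism, smooth descent along $a$, and the presentation of sheaves on the quotient ind-stacks as $\lim_\nu\D(X \fibprod^G U_\nu)$ to be compatible enough that the base change and projection-formula identities invoked above hold for arbitrary locally of finite type Artin stacks $X$ and for ind-stacks; the resulting $\cC$ is then the categorical incarnation of the Bernstein--Lunts description of an equivariant sheaf as a triple consisting of a sheaf on $X$, a sheaf on the Borel construction, and a gluing datum.
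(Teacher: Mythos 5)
Your overall architecture matches the paper's proof of Corollary~\ref{cor:soteriologic}: exhibit the square as coming from a cartesian square of (ind-)stacks, establish full faithfulness of the restriction functors to the Borel side, and then reduce the gluing to a base-change identity and conservativity along the torsor $a$. Your ``remainder is formal'' paragraph is essentially Lemma~\ref{lem:superremuneration}, and the base-change identity you need across the cartesian ind-stack square is Lemma~\ref{lem:dissyllabize}.

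Where you diverge is the crux, and I think there is a real gap there. You deduce full faithfulness of $p'^* : \D(X)\to\D(X\times U_\infty)$ from the projection formula together with the assertion that the $U_\nu$ are $n_\nu$-acyclic with $n_\nu\to\infty$, and then transfer this to $p^*$ (the paper's $q^*$) via conservativity of $a^*$. The paper instead proves full faithfulness of $q^*$ directly, for arbitrary Artin stacks $X$ and without any projection formula, from the pro-approximation lemma (Proposition~\ref{prop:moneyed}, packaged as Theorem~\ref{thm:recool} and Corollary~\ref{cor:estovers}): the pro-system $\{q_{\nu,*}q_\nu^*\sF\}_\nu$ is \emph{pro-constant} with value $\sF$.

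The distinction matters because the paper works with unbounded derived categories. Writing $\pi_\nu : X\times U_\nu\to X$, the projection formula gives $\pi_{\nu,*}\pi_\nu^*\cF\simeq\cF\otimes\pi_{\nu,*}\Lambda$, and $n_\nu$-acyclicity says that the cofibre $C_\nu$ of $\Lambda\to\pi_{\nu,*}\Lambda$ is concentrated in cohomological degrees $\geq n_\nu$. To conclude that the unit $\cF\to\lim_\nu\pi_{\nu,*}\pi_\nu^*\cF$ is invertible you need $\lim_\nu(\cF\otimes C_\nu)\simeq 0$, and for this you must control where $\cF\otimes C_\nu$ lives in terms of $n_\nu$. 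That works if $\cF$ is bounded above cohomologically, but for genuinely unbounded $\cF$ the tensor with $\cF$ can spread $C_\nu$ back into all degrees, and degree-by-degree $n_\nu$-acyclicity does not force the limit to vanish. (Pro-constancy of the cohomology groups $\{\H^i(\pi_{\nu,*}\Lambda)\}_\nu$ is strictly weaker than pro-constancy of $\{\pi_{\nu,*}\Lambda\}_\nu$ as a pro-object of $\D(X)$, and only the latter passes through $-\otimes\cF$ and limits.) This is precisely the obstruction mentioned in the acknowledgments — the authors' first version applied only to eventually coconnective coefficients — and the pro-approximation lemma exists to bypass it. Furthermore, the general form of the result (Corollary~\ref{cor:soteriologic}) is proved for coefficient systems $\D$ assumed only to satisfy $\A^1$-invariance and smooth base change, with no projection formula available. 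To repair your argument at the unbounded level, you would need to upgrade $n_\nu$-acyclicity to a genuine pro-constancy statement for the tower $\{\pi_{\nu,*}\Lambda\}_\nu$, which is in effect re-deriving Proposition~\ref{prop:moneyed} in a special case.
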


  The \inftyCat $\D(X \times^G U_\infty)$ is the limit over $\nu$ of $\D(X \times^G U_\nu)$.
  Thus \thmref{thmX:cat} asserts in particular that a sheaf on the quotient stack $[X/G]$ amounts to the data of:
  \begin{defnlist}
    \item a sheaf $\sG \in \D(X)$;
    \item\label{item:halyard} a collection of sheaves $\sF_\nu \in \D(X \fibprod^G U_\nu)$ for every $\nu$, with compatibility isomorphisms $\sF_\nu|_{X \fibprod^G U_{\nu+1}} \simeq \sF_{\nu+1}$;
    \item\label{item:forestick} for every $\nu$, an isomorphism $\sG|_{X \times U_\nu} \simeq \sF_\nu|_{X \times U_\nu}$;
  \end{defnlist}
  and compatibilities between the isomorphisms of \itemref{item:halyard} and \itemref{item:forestick}.
  See \thmref{thm:soteriologic}.
  Note that, in contrast with \cite{BernsteinLunts}, we work with unbounded derived categories.

  To prove \thmref{thmX:homology}, we will first consider the analogous statement at the level of derived global sections.
  We will see (\thmref{thm:gawkiness} and \corref{cor:limoid}) that for any sheaf $\sF \in \D([X/G])$, the canonical map
  \begin{equation*}
    \RGamma([X/G], \sF) \to \lim_\nu \RGamma(X \fibprod^G U_\nu, \sF)
  \end{equation*}
  is invertible, where the limit is a \emph{homotopy} limit in the \inftyCat of spectra.
  The obstruction to passing from this statement to the analogous statement on hypercohomologies,
  \begin{equation*}
    \H^i([X/G], \sF) \to \lim_\nu \H^i(X \fibprod_S^G U_\nu, \sF),
  \end{equation*}
  is the vanishing of Milnor's $\lim^1$ term.
  We will show (\corref{cor:outcourt}) that this obstruction vanishes when $\sF$ is eventually coconnective with respect to the cohomological t-structure (i.e., cohomologically bounded below).

  As alluded to earlier, the main result of this paper is in fact an extension of \thmref{thmX:homology} to generalized cohomology theories such as motivic cohomology ($\approx$ higher Chow groups), algebraic cobordism, and (variants of) algebraic K-theory.
  Similarly, we will prove a version of \thmref{thmX:cat} for the stable motivic homotopy category.
  In particular, we will see:
  \begin{thmX}\label{thmX:mot}
    Let $E \in \SH(k)_{<\infty}$ be a motivic spectrum which is eventually coconnective for the homotopy t-structure.
    If $G$ acts on an Artin stack $X$ of finite type over $k$ with separated diagonal, there are canonical isomorphisms
    \[ \H^i([X/G], E)\inv{e} \simeq \lim_\nu \H^i(X \fibprod^G_S U_\nu, E)\inv{e} \]
    for every $i\in\Z$, where $e$ is the characteristic exponent of the field $k$.
  \end{thmX}

  The work of Voevodsky, Ayoub, and Cisinski--D\'eglise provides the formalism of six operations on the \inftyCats $\SH(S)$ of motivic spectra over schemes $S$ (see \cite{Ayoub,CisinskiDegliseBook}).
  Recently, this has been extended\footnote{%
    We are referring here to the \emph{lisse-extended} variant, as opposed to the genuine theory constructed in \cite{sixstack}.
    For quotient stacks this corresponds to the difference between Borel-equivariant and genuine-equivariant cohomology theories.
    An example of the latter is algebraic K-theory, as discussed in \secref{sec:k}.
  } to algebraic stacks by various authors (see \cite[\S 2]{RicharzScholbach}, \cite[App.~A]{KhanVirtual}, \cite{ChowdhuryDAngelo}, and \cite[\S 7]{Weavelisse}).
  We may thus use the above approach with quotient stacks to define equivariant Borel--Moore homology with coefficients in any motivic spectrum $E \in \SH(k)$:
  \[
    \H^{\BM,G}_s(X; E)(-r) := \pi_s \RGamma([X/G], f^!(E|_{BG})(-r)),
  \]
  where $X$ is an algebraic space\footnote{%
    Note that unlike in the Betti/étale cases, we avoid speaking of equivariant Borel--Moore homology of general Artin stacks.
    That is because in the context of $\SH$, no full account of the construction of the $!$-functors for non-representable morphisms exists in the literature at the time of writing (although see \cite[\S 4]{Weaves} for a sketch).
  } with $G$-action and $r,s\in\Z$.
  On the other hand, previous definitions via the Borel construction have long been considered in the literature already, notably by B.~Totaro \cite{TotaroChow}, Edidin--Graham \cite{EdidinGraham}, Deshpande \cite{Deshpande}, Heller--Malag\'on-L\'opez \cite{HellerMalagonLopez}, and A.~Krishna \cite{KrishnaEquivariantCobordism}.
  We will show that these approaches coincide at least \emph{at the spectrum level} (see \corref{cor:Orchestia}).
  As stated above in \thmref{thmX:mot}, we also have a comparison at the level of homotopy groups when $E$ is eventually coconnective.

  For example, in the case of the motivic cohomology spectrum, this yields a comparison of equivariant motivic Borel--Moore homology with the equivariant higher Chow groups of Edidin--Graham (\corref{cor:lactoscope}):
  \begin{equation}
    \H^{\BM,G}_{s+2n}(X; \Lambda^\mot)(-n)
    \simeq \on{A}^G_{n}(X, s) \otimes \Lambda
  \end{equation}
  for all $n,s\in\bZ$, where on the right-hand side are the $G$-equivariant higher Chow groups of $X$ \cite[\S 2.7]{EdidinGraham}.

  For $E = \KGL$ the algebraic K-theory spectrum in $\SH(k)$, we get the spectrum-level computation of lisse-extended G-theory of a quotient stack (see \corref{cor:kerflummox}):
  \begin{equation}\label{eq:retia}
    \G^\lisse([X/G]) \simeq \lim_\nu \G(X \fibprod^G U_\nu).
  \end{equation}
  Of course, $\G^\lisse(-)$ does \emph{not} agree with the genuine extension of G-theory to stacks.
  In fact, the right-hand side can often be identified with the derived completion of the G-theory spectrum $\G([X/G])$ with respect to the augmention ideal in the representation ring of $G$, see \cite{TabuadaVdB,CarlssonJoshua}.
  The formula \eqref{eq:retia} means that
  \begin{equation}
    \G^{G,\lisse}(-) := \G^\lisse([-/G]),
    \qquad
    \G_s^{G,\lisse}(-) := \pi_s \G^\lisse([-/G]), ~s\in\Z,
  \end{equation}
  is Borel-type $G$-equivariant G-theory, and on algebraic stacks $\G^\lisse(-)$ may be regarded as a ``globalization'' thereof.\footnote{%
    Since $\KGL$ is not eventually coconnective (so that \thmref{thmX:mot} does not apply), there is a potential $\lim{}\!^1$ obstruction to computing $\G^{G,\lisse}_s(X)$ as $\lim_\nu G_s(X \fibprod^G U_\nu)$.
  }

  Upon rationalization we deduce the following (higher) equivariant Grothendieck--Riemann--Roch theorem (where $(-)_\Q := (-) \otimes \Q$).

  \begin{thmX}[Equivariant GRR]\label{thmX:grr}
    Let $k$ be a field, $G$ an affine algebraic group over $k$, and $X$ a quasi-separated algebraic space of finite type over $k$ with $G$-action.
    Then for every integer $s\in\Z$ there is a canonical isomorphism
    \[
      \G^{G,\lisse}_s(X)_\Q
      \simeq \prod_{n\in\Z} \on{A}^G_{n}(X, s)_\Q,
    \]
    where on the right-hand side are the $G$-equivariant higher Chow groups of $X$ \cite[\S 2.7]{EdidinGraham}.
    Moreover, this is compatible with equivariant proper push-forwards and equivariant quasi-smooth Gysin pull-backs.
  \end{thmX}

  For $s=0$ the left-hand side becomes the completion of the (genuine) $G$-equivariant $\G_0$ of $X$ at the augmentation ideal $I_G$, so we get
  \begin{equation}
    \G_0^G(X)^\wedge_{\Q,I_G}
    \simeq \prod_{n\in\Z} \on{A}^G_{n}(X)_\Q.
  \end{equation}
  This recovers the equivariant Grothendieck--Riemann--Roch theorem of Edidin--Graham \cite[Thm.~4.1]{EdidinGrahamGRR}.
  A higher generalization of the latter was previously obtained by A.~Krishna for $X$ \emph{smooth} and quasi-projective \cite[Thm.~1.2]{KrishnaRR}.

  Consider now the case of Voevodsky's algebraic cobordism spectrum $E = \MGL \in \SH(k)$.
  For simplicity, we write for all $n,s\in\Z$
  \begin{equation*}
    \MGL^G_n(X) := \H^{\BM,G}_{2n}(X; \MGL)(-n)
  \end{equation*}
  and similarly for $\MGL_n(X)$.
  When $k$ is of characteristic zero, a geometric model $\Omega_*(-)$ for ``lower'' algebraic bordism of quasi-projective $k$-schemes was given by Levine and Morel \cite{LevineMorel}; that it agrees with $\MGL_n(X)$ was proven by Levine \cite{LevineMGL}.
  For the (lower) $G$-equivariant algebraic bordism theory
  \begin{equation*}
    \Omega^{G,\mrm{HML}}_n(X) := \lim_\nu \Omega_{n+d_\nu-g}(X\fibprod^G U_\nu),
  \end{equation*}
  considered by J.~Heller and J.~Malag\'on-L\'opez \cite{HellerMalagonLopez} (where $d_\nu=\dim(U_\nu)$ and $g=\dim(G)$), we get surjections
  \begin{equation*}
    \MGL^G_n(X) \twoheadrightarrow \Omega^{G,\mrm{HML}}_n(X)
  \end{equation*}
  from $G$-equivariant motivic Borel--Moore homology with coefficients in $\MGL$.
  But as \thmref{thmX:mot} does not apply, because $\MGL$ is not eventually coconnective, we do not know whether this map is bijective in general.
  We will see that it is so with rational coefficients (see \thmref{thm:mglq}).

  The issue seems related to the question, still open, of right-exactness of the localization sequence
  \begin{equation}\label{eq:omegloc}
    \Omega^{G,\mrm{HML}}_n(Z)
    \to \Omega^{G,\mrm{HML}}_n(X)
    \to \Omega^{G,\mrm{HML}}_n(U)
    \to 0
  \end{equation}
  for $G$-invariant closed subschemes $Z \sub X$ with open complement $U = X\setminus Z$.
  In equivariant Chow homology the analogous property is obvious (see \cite[Prop.~5]{EdidinGraham}) since any homogeneous component of $\on{A}^G_*(X)$ can be computed using a single approximation $U_\nu/G$ for large enough $\nu$ (as opposed to the entire tower $\{U_\nu/G\}_\nu$).
  In the case of $\Omega^G_*(-)$, A.~Krishna\footnote{%
    Krishna \cite{KrishnaEquivariantCobordism} used a slightly different definition of $\Omega^G_*(-)$, based on \cite{Deshpande}.
    It is shown however in \cite[Rem.~14]{HellerMalagonLopez} that the two are isomorphic.
  } showed exactness at the end (i.e., surjectivity of restriction to an open) and explained why he did not believe exactness in the middle should hold (see Prop.~5.3 in \cite{KrishnaEquivariantCobordism} and the discussion just before).
  While Heller--Malag\'on-L\'opez claimed that right-exact localization holds (see \cite[Thm.~20]{HellerMalagonLopez}), their argument does not in fact prove exactness in the middle (this gap is well-known in the area).
  We will show that $\MGL^G_*(-)$ does satisfy right-exact localization (see \thmref{thm:unchanted}), and so therefore does $\Omega^{G,\mrm{HML}}_*(-)_\Q$:

  \begin{thmX}\label{thmX:bordism}
    Let $k$ be a field of characteristic zero, $G$ an affine algebraic group over $k$, and $X$ a quasi-projective $k$-scheme with linearized $G$-action.
    Then for every $G$-invariant closed subscheme $Z \sub X$ with open complement $U = X\setminus Z$ and every $n\in\Z$, there are exact sequences
    \begin{equation*}
      \MGL^G_n(Z)
      \to \MGL^G_n(X)
      \to \MGL^G_n(U)
      \to 0
    \end{equation*}
    and
    \begin{equation*}
      \Omega^{G,\mrm{HML}}_{n}(Z)_\Q
      \to \Omega^{G,\mrm{HML}}_{n}(X)_\Q
      \to \Omega^{G,\mrm{HML}}_{n}(U)_\Q
      \to 0.
    \end{equation*}
  \end{thmX}

  \ssec{Contents}

    We begin in \secref{sec:lisse} by recalling the lisse extension construction from \cite[\S 12]{sixstack}.
    Our first key result (\propref{prop:moneyed}) is proven in \secref{sec:approx}: given a lisse-extended $\A^1$-invariant Nisnevich sheaf $F$, a stack $\sX$, and an ind-stack $\sU_\infty$ over $\sX$ satisfying appropriate hypotheses, it states that $F(\sX)$ can be computed as the limit of the pro-system $F(\sU_\infty)$.
    This is inspired by arguments extracted from \cite{MorelVoevodsky}.

    In \secref{sec:borel} we specialize to $\sX = [X/G]$ and $\sU_\infty = \{X \fibprod^G U_\nu\}_\nu$.
    \propref{prop:moneyed} implies in this case that for a lisse-extended $\A^1$-invariant sheaf of spectra $F$, its value on a quotient stack $[X/G]$ can be computed as the homotopy limit
    \begin{equation*}
      F([X/G]) \simeq \lim_\nu F(X \fibprod^G U_\nu).
    \end{equation*}
    
    Our next main technical results, contained in \secref{sec:acyclic}, deal with the question of when we can pass to homotopy groups to get isomorphisms
    \[ \pi_i F([X/G]) \simeq \lim_\nu \pi_i F(X \fibprod^G U_\nu). \]
    We will show that this is possible when $F$ is the cohomology theory represented by a sheaf in an appropriate six functor formalism, using connectivity estimates to establish the Mittag--Leffler condition; see \corref{cor:outcourt}.\footnote{%
      In a previous version, we had claimed the same for arbitrary $F$.
      However, this was based on a mistake in the formulation of \propref{prop:moneyed}.
    }

    Finally, we will apply this to Betti/étale cohomology \secref{sec:betet} (proving \thmref{thmX:homology}) and to generalized cohomology theories, i.e., cohomology theories representable in the stable motivic homotopy category, in \secref{sec:sh} (proving \thmref{thmX:mot}).
    This is applied in \secref{sec:k} to deduce equivariant Grothendieck--Riemann--Roch (\thmref{thmX:grr}).
    The implications for equivariant algebraic (co)bordism are described in \secref{sec:bordism}, which in particular contains the proof of \thmref{thmX:bordism}.

    Finally, \secref{sec:cat} is dedicated to the proof of \thmref{thmX:cat}.

  \ssec{Conventions on stacks}

    \sssec{Stacks}

      We work with \emph{higher} stacks throughout the paper.
      Thus a \emph{stack} is a presheaf of \inftyGrpds on the site of $k$-schemes that satisfies hyperdescent with respect to the étale topology.

      A morphism $f : X \to Y$ is \emph{schematic} if for every scheme $V$ and every morphism $V \to Y$, the fibred product $X \fibprod_Y V$ is a scheme.
      A stack $X$ is \emph{$0$-Artin} if it has schematic $(-1)$-truncated diagonal and there exists a scheme $U$ and an étale surjection $U \to X$.

      For $n>0$, a morphism $f : X \to Y$ is \emph{$(n-1)$-representable} if for every scheme $V$ and every morphism $V \to Y$, the fibred product $X \fibprod_Y V$ is $(n-1)$-Artin.
      A stack $X$ is \emph{$n$-Artin} if it has $(n-1)$-representable diagonal and there exists a scheme $U$ and a smooth surjection $U \to X$.

      A stack is \emph{Artin} if it is $n$-Artin for some $n$.
      
      The $1$-Artin stacks are Artin stacks (or algebraic stacks) as defined e.g. in \cite[Tag~026O]{Stacks}.
      More generally, the $n$-Artin stacks, resp. Artin stacks, are the $n$-Artin stacks, resp. higher Artin stacks, as defined in \cite[\S 4.2]{GaitsgoryStacks}.

    \sssec{The Nisnevich topology}

      Recall that the Nisnevich topology on schemes is generated by families of étale morphisms $(Y_\alpha \to X)_\alpha$ such that the morphism $\coprod_\alpha Y_\alpha \to X$ is surjective on field-valued points (see e.g. \cite[App.~A]{BachmannHoyois}).

      A smooth morphism of schemes admits étale-local sections if and only if it is surjective (see \cite[IV\textsubscript{4}, Cor.~17.16.3(ii)]{EGA}).
      Here is the Nisnevich analogue:

      \begin{lem}\label{lem:acrimonious}
        Let $f : X \to Y$ be a smooth morphism of schemes.
        Then the following conditions are equivalent:
        \begin{thmlist}
          \item\label{item:tusker}
            The morphism $f$ is surjective on field-valued points.
            
            \item\label{item:calathiscus}
            There exists a Nisnevich cover $Y' \twoheadrightarrow Y$ such that the base change $X \fibprod_Y Y' \to Y'$ admits a section.
          \end{thmlist}
          Moreover, if $Y$ is quasi-compact and quasi-separated, then $Y'$ in \itemref{item:calathiscus} can also be taken to be affine.
      \end{lem}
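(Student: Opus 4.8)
The plan is to prove the two implications separately, the content being in \itemref{item:tusker}$\,\Rightarrow\,$\itemref{item:calathiscus}.

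For \itemref{item:calathiscus}$\,\Rightarrow\,$\itemref{item:tusker}, note first that any Nisnevich cover is surjective on field-valued points: the families of étale morphisms that are jointly surjective on field-valued points are already stable under base change and composition and contain the isomorphisms, so they form a pretopology and hence generate the Nisnevich topology. Thus a field-valued point $\Spec K \to Y$ lifts along $Y' \to Y$, and composing with the section $Y' \to X \fibprod_Y Y'$ and the projection to $X$ realizes it as the image of a field-valued point of $X$.

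For \itemref{item:tusker}$\,\Rightarrow\,$\itemref{item:calathiscus}, it suffices to produce, for each $y \in Y$, an affine Nisnevich neighbourhood $(V_y, v_y) \to (Y, y)$ over which $f$ admits a section: then $\coprod_{y \in Y} V_y \to Y$ is étale and, because each point $y$ has the trivial-residue lift $v_y$, surjective on field-valued points, hence a Nisnevich cover, and $f$ acquires a section over it. To build $V_y$, I would pass to the Henselization $Y^{\mrm h} = \Spec \mathcal{O}_{Y,y}^{\mrm h}$ with closed point $\bar y = \Spec \kappa(y)$. By \itemref{item:tusker} the point $\bar y \to Y$ lifts to $X$, so the smooth morphism $f^{\mrm h}\colon X \fibprod_Y Y^{\mrm h} \to Y^{\mrm h}$ has a $\kappa(y)$-rational point $x^{\mrm h}$ in its fibre over $\bar y$. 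On an affine open $U \ni x^{\mrm h}$ one may factor $f^{\mrm h}|_U$ as an étale morphism $g\colon U \to \A^d_{Y^{\mrm h}}$ followed by the projection $\pi$ (local structure of smooth morphisms, \cite[IV, 17.11.4]{EGA}). The image $g(x^{\mrm h})$ is a $\kappa(y)$-point of $\A^d_{\bar y}$; lifting its coordinates along $\mathcal{O}_{Y,y}^{\mrm h} \twoheadrightarrow \kappa(y)$ gives a section $\sigma$ of $\pi$ through $g(x^{\mrm h})$. The base change $U \fibprod_{\A^d_{Y^{\mrm h}},\,\sigma} Y^{\mrm h}$ is étale over $Y^{\mrm h}$ and has, over $\bar y$, the point determined by $x^{\mrm h}$, which has residue field $\kappa(y)$; since $Y^{\mrm h}$ is Henselian local, this étale scheme admits a section, and composing it with the projection to $X$ yields a section of $f^{\mrm h}$. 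Assuming $f$ finitely presented (shrink $X$ to a quasi-compact open containing the image of that section and $Y$ to an affine open around $y$), $\mathcal{O}_{Y,y}^{\mrm h}$ is the filtered colimit of the coordinate rings of the affine Nisnevich neighbourhoods of $y$, so the standard limit formalism \cite[IV, \S8]{EGA} descends the section over $Y^{\mrm h}$ to a section of $f$ over some $(V_y, v_y)$.

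For the last clause, when $Y$ is quasi-compact and quasi-separated one must refine $\coprod_y V_y \to Y$ to a cover with affine source. One cannot merely pass to a finite subfamily: a finite family of étale morphisms with covering images need not be a Nisnevich cover, and the set of points of $Y$ over which a fixed étale $V_y$ carries a trivial-residue lift is in general not open (already for the squaring map $\bbG_m \to \bbG_m$ over $\bQ$, which is étale and surjective but completely decomposed only over a non-open set of points). Instead one invokes the standard fact that over a quasi-compact quasi-separated base every Nisnevich cover is refined by a \emph{finite} one by affine schemes (see e.g.\ \cite{BachmannHoyois}); a finite coproduct of affine schemes is affine, providing $Y'$, and pulling back the section along $Y' \to \coprod_y V_y$ finishes. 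I expect this step to be the main technical obstacle: over a Noetherian finite-dimensional base it is an easy induction on dimension, but over a merely qcqs base it requires either a Noetherian-approximation argument or a direct construction of a splitting sequence from the local structure of étale morphisms. The conceptual core, however, is the assertion that for smooth morphisms surjectivity on field-valued points forces \emph{Nisnevich}-local — not just étale-local — sections, and this is precisely where hypothesis \itemref{item:tusker} enters, through the $\kappa(y)$-rational point in the fibre.
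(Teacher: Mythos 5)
Your argument is correct and follows the paper's route: for \itemref{item:tusker}~$\Rightarrow$~\itemref{item:calathiscus} you lift the rational point over the Henselization, descend by finite presentation to an affine Nisnevich neighbourhood, take the disjoint union over all points, and in the qcqs case refine to a finite affine cover. (You rederive the Henselization step from the local structure of smooth morphisms plus the Henselian lifting property for étale maps; the paper simply cites \cite[IV\textsubscript{4}, 18.6.6(i), 18.5.17]{EGA}.) One remark on your final paragraph: the claim that ``one cannot merely pass to a finite subfamily'' of $\{V_y\}_y$ is not correct---in fact one can, and this is exactly what the paper's citation of \cite[Lem.~2.1.2]{HoyoisMilnor} provides. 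Your $\mathbb{G}_m$-example rightly shows that the completely decomposed locus of a fixed étale $V_y$ need not be Zariski-open, but that does not obstruct the subfamily argument: grant, as you do via \cite{BachmannHoyois}, that some finite Nisnevich cover $\{W_i\}_{i=1}^n$ refines $\{V_y\}_y$, with each $W_i \to Y$ factoring through some $V_{y(i)} \to Y$. Then $\{V_{y(i)}\}_{i=1}^n$ is already a Nisnevich cover, because any field-valued point of $Y$ lifts to some $W_i$ and hence, by composing with $W_i \to V_{y(i)}$, to $V_{y(i)}$. So the hedge about a ``main technical obstacle'' is unnecessary; either route---finite subfamily or finite refinement---closes the argument, and your proof stands.
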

      \begin{proof}
        \emph{\itemref{item:calathiscus} $\implies$ \itemref{item:tusker}:}
        We will show that for every field-valued point $y : \Spec(\kappa) \to Y$, the base change $X_{\{y\}} = X \fibprod_Y \Spec(\kappa) \to \Spec(\kappa)$ admits a section.
        By base change, the condition implies that there is a Nisnevich cover $S \twoheadrightarrow \Spec(\kappa)$ such that $X_S \simeq X_{\{y\}} \fibprod_{\Spec(\kappa)} S \to S$ admits a section.
        Since $S \twoheadrightarrow \Spec(\kappa)$ is surjective on field-valued points by definition, it admits a section.
        The composition of the section $\Spec(\kappa) \to S$, the section $S \to X_S$, and the morphism $X_S \to X_\kappa$ is then a section of $X_{\{y\}} \to \Spec(\kappa)$ as desired.

        \emph{\itemref{item:tusker} $\implies$ \itemref{item:calathiscus}:}
        Let $y : \Spec(\kappa) \to Y$ be a point and $x : \Spec(\kappa) \to X$ a lift.
        Since $f : X \to Y$ is smooth, there exists by \cite[IV\textsubscript{4}, 18.6.6(i), 18.5.17]{EGA} a morphism $\tilde{x} : S \to X$ extending $x$, where $S$ is the henselization of $Y$ at $y$.
        Recall that $S$ can be identified with the cofiltered limit of elementary étale neighbourhoods\footnote{%
          Here an elementary étale neighbourhood is an étale morphism $Y' \to Y$ along which $y$ lifts to $y' : \Spec(\kappa) \to Y'$.
        } of $(Y,y)$.
        Since $X$ is locally of finite presentation over $Y$, it follows that there exists an étale neighbourhood $Y'_y \to Y$ over $y$ such that the $Y$-morphism $\tilde{x} : S \to X$ factors through $Y'_y \to X$.
        Then the disjoint union $Y' = \coprod_y Y'_y \twoheadrightarrow Y$ over all field-valued point $y$ is an étale morphism which is surjective on field-valued points, i.e., a Nisnevich cover, with the desired property.
        If $Y$ is quasi-compact, then there is a finite subcover refining $Y'$ (see e.g. \cite[Lem.~2.1.2]{HoyoisMilnor}), so in particular we may take $Y'$ quasi-compact.
        We may then further replace $Y'$ by a Zariski cover by an affine scheme.
      \end{proof}

    \sssec{Fibre dimension}
    \label{sssec:fibdim}

      We recall the notion of fibre dimension for a morphism of stacks, as studied in \cite[IV\textsubscript{3}, \S 13]{EGA} and \cite[Tag~0DRQ]{Stacks}.

      Let $f : \sX \to \sY$ be a locally of finite type $1$-representable morphism of locally noetherian Artin stacks.
      Given a field $\kappa$ and a $\kappa$-point $x$ of $\sX$, the \emph{fibre dimension} of $f$ at $x$ is the dimension at $x$ of the topological space underlying the fibre $f^{-1}(f(x)) = \sX_{f(x)} = \sX \fibprod_\sY \Spec(\kappa)$.
      See \cite[Tag~0DRQ]{Stacks} for details, and \cite[Tag~04XG]{Stacks} for the definition of the underlying topological space of a $1$-Artin stack.

      The fibre dimension of $f : \sX \to \sY$ determines a function on the underlying topological space of $\sX$.
      This function is upper semi-continuous; see \cite[IV\textsubscript{3}, Thm.~13.1.3]{EGA} and \cite[Tag~0DRE]{Stacks}.

      \begin{lem}\label{lem:dimcodim}
        Let $f : Y \to X$ be a smooth morphism of locally noetherian schemes and $i : Z \hook Y$ a closed immersion.
        Let $d_{Y/X}$ and $d_{Z/X}$ denote the fibre dimensions of $f$ and $g = f\circ i$.
        For every point $z$ of $Z$, we have $d_{Y/X}(z) - d_{Z/X}(z) = \codim(Z\fibprod_X \{g(z)\}, Y\fibprod_X \{g(z)\})$.
      \end{lem}
      \begin{proof}
        Let $z : \Spec(k(z)) \to Z$ be a point of $Z$, with image $x = g(z)$ in $X$.
        By definition, the fibre dimensions of $f$ and $g$ at $z$ are
        \[
          d_{Y/X}(z) = \dim_z(Y \fibprod_X \{x\}),
          \quad d_{Z/X}(z) = \dim_z(Z \fibprod_X \{x\}),
        \]
        respectively.
        Since $Y$ is smooth over $X$, $Y \fibprod_X \{x\}$ is in particular biequidimensional (combine \cite[0\textsubscript{IV}, Cor.~16.5.12]{EGA}, \cite[IV\textsubscript{2}, Cor.~5.10.9]{EGA}, and \cite[Lem.~2.6]{Heinrich}).
        It follows then from \cite[Prop.~4.1]{Heinrich} that
        \[ d_{Y/X}(z) - d_{Z/X}(z) = \codim(Z \fibprod_X \{x\}, Y \fibprod_X \{x\}) \]
        as claimed.
      \end{proof}

  \ssec{Notation}

    We work over a fixed commutative ring $k$, which we leave implicit in the notation.
    We denote by $\Sch$ (resp. $\Asp$) the category of schemes (resp. quasi-separated algebraic spaces) of finite type over $k$.

    We denote by $\Stk$ the \inftyCat of Artin stacks locally of finite type over $k$.
    Given $S\in\Stk$ we denote by $\Stk_S$ the \inftyCat of Artin stacks locally of finite type over $S$.
    Note that any morphism in $\Stk$ (resp. $\Stk_S$) is automatically locally of finite type.

  \ssec{Acknowledgments}

    We would like to thank Tom Bachmann, Jeremiah Heller, Jens Hornbostel, Henry July, Marc Levine, David Rydh, and Pavel Safronov for comments, questions, and discussions.
    We are especially grateful to Tom Bachmann and Marc Levine for pointing out several errors in previous revisions.
  
    The first-named author spoke about a preliminary version of these results at the INI workshop ``Algebraic K-theory, motivic cohomology and motivic homotopy theory'' in June 2022.

    We acknowledge support from MOST grant 110-2115-M-001-016-MY3 (A.K.) and EPSRC grant no EP/R014604/1 (A.K. and C.R.).
    We would like to thank the Isaac Newton Institute for Mathematical Sciences, Cambridge, for support and hospitality during the programme KAH2 where work on this paper was undertaken.


\section{Lisse extension}
\label{sec:lisse}

  Given $\sX\in \Stk$, we denote by $\Lis_\sX$ (resp. $\LisStk_\sX$) the \inftyCat of pairs $(T, t)$ where $T \in \Sch$ (resp. $T \in \Stk$) and $t : T \to \sX$ is a smooth morphism\footnote{%
    Since $t : T \to \sX$ is schematic for $T \in \Sch$, it is smooth if and only if for any morphism $T' \to \sX$ where $T'$ is a scheme, the morphism of schemes $T \fibprod_\sX T' \to T'$ is smooth.
    See e.g. \cite[Chap.~2, \S 4.1]{GaitsgoryRozenblyum}.
  }.
  Let $F : \Lis_\sX^\op \to \sV$ be a presheaf, where $\sV$ is an \inftyCat with limits.

  \begin{defn}\label{defn:intraoctave}
    The \emph{lisse extension} of $F$ is the presheaf $$F^\lisse : \LisStk_\sX^{\op} \to \sV$$ defined as the right Kan extension of $F$ along the fully faithful functor $\Lis_\sX \hook \LisStk_\sX$.
    In particular, we have
    \[
      F^\lisse(\sX) \simeq \lim_{(T,t)} F(T)
    \]
    where the limit is taken over $(T,t)\in\Lis_\sX$.
  \end{defn}

  Given a presheaf $F : \Sch^\op \to \sV$, we may restrict along the forgetful functor $\Lis_\sX \to \Sch$ and form the lisse extension of the resulting presheaf $F_\sX$ on $\Lis_\sX$.
  On the other hand, we may also describe $F_\sX^\lisse$ more directly in terms of $F$.
  Moreover, this will work for presheaves with restricted functoriality (e.g. only for smooth or lci morphisms).

  \begin{defn}\label{defn:uncircumspectly}
    Let $\Stk^? \sub \Stk$ be a wide subcategory containing all smooth morphisms, and denote by $\Sch^? \sub \Sch$ the intersection $\Sch \cap \Stk^?$.
    Given a presheaf $F : \Sch^{?,\op} \to \sV$, its \emph{lisse extension} $F^\lisse$ is its right Kan extension
    $$F^\lisse : \Stk^{?,\op} \to \sV$$
    along $\Sch^{?} \hook \Stk^{?}$.
  \end{defn}

  \begin{rem}
    Let $\Sch^?_{/\sX}$ denote the full subcategory of the slice $\Stk^?_{/\sX}$ spanned by pairs $(T, t : T \to \sX)$ where $T$ is a scheme (and $t$ is a morphism in $\Stk^?$).
    Note that morphisms $(T,t) \to (T',t')$ are morphisms $T \to T'$ in $\Sch^?$ which are compatible with $t$ and $t'$.
    Then we have
    \[
      F^\lisse(\sX) \simeq \lim_{(T,t) \in \Sch^?_{/\sX}} F(T).
    \]
  \end{rem}

  \begin{prop}\label{prop:entropionize}
    Let $F : \Sch^{?,\op} \to \sV$ be a presheaf.
    Assume that $\Sch^?$ contains all lci morphisms and that its morphisms are stable under smooth base change\footnote{%
      i.e., if $X \to Y$ is a morphism in $\Sch^?$ and $Y' \to Y$ is a smooth morphism in $\Sch$, then the base change $X \fibprod_Y Y' \to Y'$ belongs to $\Sch^?$
    }.
    If $F$ satisfies Nisnevich descent, then for every $\sX \in \Stk$ there is a canonical isomorphism
    \[
      F^\lisse|_{\LisStk_\sX} \to (F|_{\Lis_\sX})^\lisse
    \]
    of presheaves on $\LisStk_\sX$.
  \end{prop}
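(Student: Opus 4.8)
The plan is to show that, for every $T\in{}^\tau\LisStk_S$, both $F^\lisse(T)$ and $(F|_{\Lis_S})^\lisse(T)$ compute the limit of $F$ over one and the same ``smooth site'': writing $\Lis_T$ for the category of schemes $W$ equipped with a \emph{smooth} morphism $W\to T$ (and $T$-morphisms between them), I aim to produce equivalences $F^\lisse(T)\simeq\lim_{\Lis_T}F$ and $(F|_{\Lis_S})^\lisse(T)\simeq\lim_{\Lis_T}F$, natural in $T$ and compatible with the canonical comparison map.

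I would first record that both presheaves restrict on $\Lis_S$ to $F|_{\Lis_S}$: for $(F|_{\Lis_S})^\lisse$ this is \defnref{defn:intraoctave}, and for $F^\lisse$ it holds because a right Kan extension along the fully faithful inclusion $\Sch^?\hook{}^\tau\Stk^?$ restricts back to $F$, while $\Lis_S$ lands in $\Sch^?$ (a morphism over $S$ between two schemes smooth over $S$ is a section of a smooth morphism followed by a smooth morphism, hence is lci, hence belongs to $\Sch^?$). Consequently it is enough to check that $F^\lisse|_{{}^\tau\LisStk_S}$ is itself right Kan extended from $\Lis_S$, i.e. that for each $T$ the canonical map $F^\lisse(T)\to\lim_{(V,v)}F(V)$ is an equivalence, the limit being over the category $\Lis_{S,/T}$ of schemes $V$ smooth over $S$ together with an $S$-morphism $v:V\to T$.

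The main step is a cofinality argument. By \defnref{defn:uncircumspectly} and the Remark following it, $F^\lisse(T)=\lim_{\Sch^?_{/T}}F$, and I claim the full subcategory $\Lis_T\hook\Sch^?_{/T}$ spanned by the smooth morphisms from schemes is coinitial. It is a full subcategory precisely because a $T$-morphism between two schemes smooth over $T$ is lci, hence lies in $\Sch^?$. For coinitiality one checks that, for each $(W,w)\in\Sch^?_{/T}$, the comma category — whose objects are the morphisms $\phi:W'\to W$ in $\Sch^?$ such that $w\circ\phi$ is smooth — is weakly contractible; and indeed the empty scheme $\initial$, with its unique morphism to $W$ (which is smooth, hence in $\Sch^?$), is an initial object of it. Hence $F^\lisse(T)\simeq\lim_{\Lis_T}F$. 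The identical argument applied to $(F|_{\Lis_S})^\lisse(T)=\lim_{\Lis_{S,/T}}F$ — now using that $\initial$ is smooth over $S$, so lies in $\Lis_S$ — shows that the subcategory of those $(V,v)\in\Lis_{S,/T}$ with $v$ smooth is coinitial; but a scheme carrying a smooth morphism to $T$ is automatically smooth over $S$ and that morphism is automatically an $S$-morphism, so this subcategory is literally $\Lis_T$, with $F$ evaluated on the same schemes. Thus $(F|_{\Lis_S})^\lisse(T)\simeq\lim_{\Lis_T}F$ as well. Finally, since under either identification the component of the limit cone at an object $(W\to T)\in\Lis_T$ is the restriction of $F^\lisse$ along $W\to T$, the two equivalences are compatible with the canonical map, which is therefore an equivalence; naturality in $T$ is clear.

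The delicate point is the cofinality verification — that $\Lis_T$ genuinely sits in $\Sch^?_{/T}$ as a full subcategory, which is where the hypothesis that $\Sch^?$ contains all lci morphisms enters, and that $\initial$ supplies an initial object of the relevant comma $\infty$-categories. The two remaining hypotheses (stability of $\Sch^?$ under smooth base change and $\tau$-descent of $F$) power the alternative, more robust argument: under them $F^\lisse$ and $(F|_{\Lis_S})^\lisse$ are $\tau$-sheaves on stacks which agree on $\Lis_S$, and every object of ${}^\tau\LisStk_S$ admits a $\tau$-hypercover by objects of $\Lis_S$ (iterate smooth atlases), so the two agree.
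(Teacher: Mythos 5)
Your main argument — the cofinality of the inclusion $\Lis_T \hookrightarrow \Sch^?_{/T}$ — computes the wrong comma category, and this is a fatal gap, not a fixable detail. Recall that the limit $F^\lisse(T) = \lim_{(V,v)\in\Sch^?_{/T}} F(V)$ is indexed by $(\Sch^?_{/T})^\op$, since $F$ is contravariant. For the inclusion of a full subcategory to induce an equivalence on this limit, the relevant criterion (dual of Quillen's Theorem A / \cite[Thm.~4.1.3.1]{LurieHTT}) requires, for each $(W,w)\in\Sch^?_{/T}$, weak contractibility of the category of pairs $(V,v)\in\Lis_T$ equipped with a morphism $W\to V$ \emph{over} $T$ — i.e.\ smooth $T$-schemes that $W$ maps \emph{into}. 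You instead consider morphisms $V\to W$ (morphisms \emph{out of} $V$), and note that $\initial$ is initial there. That proves the wrong thing: a category with an initial object given by $\initial$ occurs in essentially \emph{any} comma category over $\Sch^?_{/T}$, which would make every full subcategory containing $\initial$ coinitial and hence would collapse the limit to $F(\initial)$ — a reductio that shows the direction must be wrong. For the correct comma category, $\initial$ is not even an object unless $W=\initial$. And for a general morphism $w:W\to T$ to a stack there is no reason to factor $w$ through a smooth $T$-scheme, so the correct comma category is not contractible. The same criticism applies verbatim to the claimed coinitiality of $\Lis_T\hookrightarrow(\Lis_S)_{/T}$.

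A quick sanity check should have flagged the problem: your cofinality argument nowhere uses the $\tau$-descent hypothesis on $F$, which the proposition requires (and which is genuinely necessary). Your closing sentence — that both $F^\lisse$ and $(F|_{\Lis_S})^\lisse$ are $\tau$-sheaves on ${}^\tau\LisStk_S$ agreeing on $\Lis_S$, so they coincide by hypercovers built from smooth atlases — is essentially the correct strategy, and is what the paper does. But it is not an ``alternative, more robust'' afterthought: it is the whole content. Showing that $F^\lisse$ satisfies \v{C}ech descent along a smooth atlas $U\twoheadrightarrow X$ when $X$ is a stack is where the real work lies. The paper's Lemma~\ref{lem:coccus} handles this by first reducing to the case that $X$ is a scheme with $p$ split (using $\tau$-descent of $F$ and the fact that a section of a smooth morphism is lci, hence in $\Sch^?$), and then, for general $X$, by a \emph{correct} cofinality argument: for each smooth $S$-scheme $V$, the base change $\Sch^{?,\op}_{/S}\to\Sch^{?,\op}_{/V}$ is cofinal, using the diagonal section $T\to T\times_S V$ (which is lci, hence a morphism in $\Sch^?$). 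If you want to salvage your write-up, you should drop the $\Lis_T$-coinitiality claim and flesh out the sheaf-theoretic argument along these lines.
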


  For example, we will apply \propref{prop:entropionize} to presheaves on $\Sch$ and the subcategories
  \[
    \Sch^{\sm},
    ~\Sch^{\lci}
    \quad \sub \quad \Sch
  \]
  containing only the smooth and lci morphisms, respectively.

  \begin{lem}\label{lem:coccus}
    Let $F : \Sch^{?,\op} \to \sV$ be a presheaf where $\Sch^?$ satisfies the conditions of \propref{prop:entropionize}.
    If $F$ satisfies $\tau$-descent, where $\tau$ stands for the Nisnevich or étale topology, then for every $\sX \in \Stk$, every scheme $U$ and every smooth morphism $p : U \to \sX$ admitting $\tau$-local sections, the canonical map
    \begin{equation}\label{eq:rhyacolite}
      F^\lisse(\sX) \to \Tot(F(U_\bullet))
    \end{equation}
    is invertible, where $U_\bullet$ is the \v{C}ech nerve of $p$, ``Tot'' denotes the totalization of a cosimplicial object.
  \end{lem}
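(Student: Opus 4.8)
The plan is to compute $F^\lisse(S)$ as an iterated limit over $\Sch^?_{/S}$ and over the cosimplicial index $\bDelta$, and to commute the two. Recall that $F^\lisse(S)\simeq\lim_{(T,t)\in\Sch^?_{/S}}F(T)$, and that \eqref{eq:rhyacolite} is the map induced by the \v{C}ech-nerve functor $\bDelta^\op\to\Sch^?_{/S}$, $[n]\mapsto(U_n, U_n\to S)$; this functor is well defined since each $U_n\to S$ is smooth (a composite of base changes of $p$) and every structure map of $U_\bullet$ is smooth (the faces) or lci (the degeneracies, being base changes of the diagonal of the smooth morphism $p$), so all lie in $\Sch^?$. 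When an $U_n$ fails to be a scheme we read $F(U_n)$ as $F^\lisse(U_n)$; this agrees with $F(U_n)$ on schemes and, by $\tau$-descent of $F$, with the canonical $\tau$-sheaf extension of $F$ to algebraic spaces.

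First I would establish, for each fixed $n$, the identification $\lim_{(T,t)\in\Sch^?_{/S}}F(T\fibprod_S U_n)\simeq F^\lisse(U_n)$. Consider the base-change functor $\Phi_n\colon\Sch^?_{/S}\to\Sch^?_{/U_n}$, $(T,t)\mapsto(T\fibprod_S U_n,\pr)$, which is well defined by the smooth base-change stability in the hypotheses (and the fact that $\Sch^?$ contains lci morphisms, such as the graph section $V\to V\fibprod_S U_n$, a base change of the diagonal of $U_n\to S$). A direct computation of hom-sets shows that for every $V\in\Sch^?_{/U_n}$ the comma category $V\downarrow\Phi_n$ is equivalent to the coslice $(\Sch^?_{/S})_{V/}$: unwinding, a $\Sch^?$-morphism $V\to T\fibprod_S U_n$ over $U_n$ is the same datum as a $\Sch^?$-morphism $V\to T$ over $S$, using the previous remark on the graph section to see that both descriptions stay inside $\Sch^?$. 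Hence $V\downarrow\Phi_n$ has an initial object and is weakly contractible, so $\Phi_n$ is cofinal; the limit computing $F^\lisse(U_n)=\lim_V F(V)$ therefore restricts along $\Phi_n$ to $\lim_{(T,t)}F(T\fibprod_S U_n)$, which is the claimed identification (and $F^\lisse(U_n)=F(U_n)$ when $U_n$ is a scheme, by the right Kan extension property along $\Sch^?\hookrightarrow{}^\tau\Stk^?$).

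Next I would use $\tau$-descent of $F$: for each $(T,t)$, the morphism $T\fibprod_S U\to T$ is smooth and admits $\tau$-local sections (it is a base change of $p$), hence is an effective epimorphism of $\tau$-sheaves, so $T$ is the colimit of the \v{C}ech nerve of $T\fibprod_S U\to T$, whose $n$-th term is $T\fibprod_S U_n$. As $F$ satisfies $\tau$-descent this yields $F(T)\simeq\Tot\big(F(T\fibprod_S U_\bullet)\big)$, naturally in $(T,t)$. Combining the two steps and interchanging the limits,
\[
  F^\lisse(S)\;\simeq\;\lim_{(T,t)}\Tot\big(F(T\fibprod_S U_\bullet)\big)\;\simeq\;\Tot\Big(\lim_{(T,t)}F(T\fibprod_S U_\bullet)\Big)\;\simeq\;\Tot\big(F(U_\bullet)\big),
\]
the last step by the identification of the previous paragraph; a check on each cosimplicial level shows this composite is the canonical map \eqref{eq:rhyacolite}.

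The formal interchange is harmless, so I expect the real obstacle to be the treatment of non-schematic fibre products. When $S$ — and hence $T\fibprod_S U_n$ — is a higher Artin stack one cannot literally apply $F$, and must either reduce to the case where $S$ has schematic diagonal (so that the $U_n$ are algebraic spaces, to which $\tau$-descent canonically extends $F$) or run an induction on the Artin degree of $S$; bundled with this is the need to verify that the hypotheses of \propref{prop:entropionize} are exactly what makes $\Phi_n$ well defined and the graph section lci. Once proved, \lemref{lem:coccus} shows in particular that $F^\lisse$ satisfies $\tau$-descent along smooth morphisms admitting $\tau$-local sections.
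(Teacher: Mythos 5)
Your proposal follows essentially the same route as the paper: it decomposes the claim into the identification $\lim_{(T,t)\in\Sch^?_{/S}}F(T\fibprod_S U_n)\simeq F^\lisse(U_n)$ (a cofinality step), the statement that $F(T)\simeq\Tot\big(F(T\fibprod_S U_\bullet)\big)$ for each scheme $T$ mapping to $S$, and a commutation of limits. These are exactly the three ingredients in the paper's proof. Your cofinality argument — observing that the comma category is equivalent to $(\Sch^?_{/S})_{V/}$, which has $(V,\id)$ as an initial object — is the same argument the paper gives (there, presented by exhibiting the graph section $s\colon T\to T\fibprod_S V$ and noting it is lci).

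The one place where you are less explicit than the paper is the key descent step. You invoke ``$T\fibprod_S U\to T$ is an effective epimorphism of $\tau$-sheaves, hence $F(T)\simeq\Tot\big(F(T\fibprod_S U_\bullet)\big)$''. This is correct in spirit, but some care is needed: for a presheaf $F$ with restricted functoriality (on $\Sch^?$) and satisfying only $\tau$-Čech descent, passing from the effective-epi property in the $\tau$-topos to the desired totalization formula is not a one-liner. The paper instead does this concretely — it picks a $\tau$-cover $T'\twoheadrightarrow T$ over which the smooth morphism acquires a section, reduces to the split case using $\tau$-descent along the Čech nerve of $T'\to T$, and then concludes by Lurie's lemma on split (co)simplicial objects \cite[Lem.~6.1.3.16]{LurieHTT}. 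It is worth noting that the section is lci, so it does lie in $\Sch^?$, which is precisely why the splitting argument stays within the admissible morphisms. If you unwind your effective-epi argument carefully, it amounts to the same double-Čech/split-simplicial computation; but as written the step is asserted rather than proved.

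You correctly identify, at the end, the remaining subtlety: $T\fibprod_S U_n$ (and the $U_n$ themselves) need not be schemes when $S$ is a higher Artin stack, so one must be careful about where $F$ versus $F^\lisse$ is being applied and whether $\Phi_n$ lands in $\Sch^?_{/U_n}$. The paper's two-stage structure — first proving the statement for $S$ a scheme (so the terms of $U_\bullet$ are algebraic spaces), and then passing to the limit over $(T,t)\in\Sch^?_{/S}$ — is one concrete implementation of the ``induction on Artin degree'' reduction you sketch but do not carry out. So your plan is sound and essentially coincides with the published proof; the actual work remaining is exactly what you flagged.
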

  \begin{proof}
    Assume first that $X=\sX$ is a scheme.
    By assumption, there exists a scheme $V$ and a $\tau$-cover $V \twoheadrightarrow X$ over which $p$ admits a section.
    Since $F$ satisfies descent for the \v{C}ech nerve of $V\twoheadrightarrow X$, we may replace $X$ by $V$ and thereby assume that $p$ admits a section.
    This section (which is lci, hence determines a morphism in $\Sch^?$) gives rise to a splitting of the augmented simplicial object $U_\bullet \to X$, so that the map \eqref{eq:rhyacolite} is invertible by \cite[Lem.~6.1.3.16]{LurieHTT}.

    Now we consider the general case.
    For every pair $(T,t)$ where $T \in \Sch$ and $t : T \to \sX$ is a morphism in $\Stk^?$, denote by $U_T \twoheadrightarrow T$ the base change of $p$ and by $U_{T,\bullet}$ its \v{C}ech nerve.
    The canonical map
    \begin{equation}\label{eq:metaloscopy}
      F(T) \to \Tot(F(U_{T,\bullet}))
    \end{equation}
    is invertible by above.

    Note that for every smooth morphism $Y \to \sX$ from a scheme, the base change functor $\Sch^{?,\op}_{/\sX} \to \Sch^{?,\op}_{/Y}$ is cofinal.
    Indeed, given $(T,t) \in \Sch^?_{/Y}$ we have a section $s : T \to T \fibprod_\sX Y$ over $Y$ of the projection $T \fibprod_\sX Y \to T$.
    Since the latter is smooth, $s$ is lci and determines a morphism in $\Sch^?_{/Y}$ whose target lies in the essential image of the functor in question.
    Thus, the morphism \eqref{eq:rhyacolite} is the limit over $(T,t)\in\Sch^?_{/\sX}$ of the isomorphisms \eqref{eq:metaloscopy}.
  \end{proof}

  \begin{proof}[Proof of \propref{prop:entropionize}]
    It will suffice to show that for every $\sY \in \LisStk_\sX$, the projection map
    \begin{equation}\label{eq:subhealth}
      F^\lisse(\sY) \simeq \lim_{(T,t)\in\Sch^?_{/\sY}} F(T)
      \to \lim_{(T,t)\in(\Lis_\sX)_{/\sY}} F(T)
    \end{equation}
    is invertible.
    Here $(\Lis_\sX)_{/\sY}$ is the \inftyCat of pairs $(T, t)$ where $T \in \Lis_\sX$ and $t : T \to \sY$ is a morphism in $\LisStk_\sX$.

    By \cite[Thm.~5.1]{ChowdhuryDAngelo}, there exists a scheme $Y$ and a smooth morphism $p : Y \twoheadrightarrow \sY$ admitting \emph{Nisnevich}-local sections.\footnote{%
      A priori, an Artin stack $\sY$ admits by definition such a morphism admitting \emph{étale}-local sections.
    }
    Denote by $Y_\bullet$ the \v{C}ech nerve of $p$, so that there is an equivalence $\Tot(F(Y_\bullet)) \simeq F^\lisse(\sY)$ by \lemref{lem:coccus}.
    This defines a diagram $\bDelta^\op \to (\Lis_\sX)_{/\sY}$, so by projection there is a canonical map
    \[
      \lim_{(T,t)\in (\Lis_\sX)_{/\sY}} F(T)
      \to \lim_{[n]\in\bDelta} F(Y_n)
      \simeq F^\lisse(\sY).
    \]
    One verifies that this is inverse to \eqref{eq:subhealth}.
  \end{proof}


\section{A pro-approximation lemma}
\label{sec:approx}

This section contains the first key technical result of the paper (\propref{prop:moneyed}).
We begin by recalling some preliminaries about pro-objects.

\begin{rem}\label{rem:proscolex}
  Let $\cV$ be an \inftyCat with limits.
  We denote by $\Pro(\cV)$ the \inftyCat of pro-objects in $\cV$ (see e.g. \cite{BarneaHarpazHorel}).
  There is a canonical fully faithful functor $\cV \hook \Pro(\cV)$ sending an object $V \in \cV$ to the constant pro-object $\{V\}$, whose essential image generates $\Pro(\cV)$ under cofiltered limits (see \cite[Cor.~3.2.14]{BarneaHarpazHorel}).
  The functor $\cV \hook \Pro(\cV)$ preserves finite limits, which are computed levelwise in $\Pro(\cV)$; indeed, this is easily verified by universal properties using the formula for mapping \inftyGrpd
  \begin{equation}\label{eq:muconic}
    \Maps_{\Pro(\cV)}(\{X_\alpha\}_\alpha, \{Y_\beta\}_\beta) \simeq \lim_\beta \colim_\alpha \Maps_{\cV}(X_\alpha, Y_\beta).
  \end{equation}
  In particular, it follows that $\Pro(\cV)$ admits arbitrary limits by \cite[Prop.~4.4.2.6]{LurieHTT}.
  Moreover, formation of limits defines a functor
  \begin{equation}\label{eq:inexpugnable}
    \Pro(\cV) \to \cV,
    \quad \{X_\alpha\}_\alpha \mapsto \lim_\alpha X_\alpha,
  \end{equation}
  which is right adjoint to $\cV \hook \Pro(\cV)$ (again using \eqref{eq:muconic}), and in particular limit-preserving.
\end{rem}

The following definition is inspired by \cite[\S 4, Def.~2.1]{MorelVoevodsky} (see also \remref{rem:latticed} below).

\begin{notat}\label{notat:duces}
  Let $\sX \in \Stk$ and $\{\sU_\nu\}_\nu$ a filtered diagram in $\Stk_\sX$ with monomorphisms as transition maps.
  Suppose that for every index $\nu$ there is a vector bundle $\sV_\nu$ over $\sX$, an open immersion $\sU_\nu \hook \sV_\nu$ over $\sX$, and a closed substack $\sW_\nu \sub \sV_\nu$ complementary to $\sU_\nu$ containing the zero section, such that the following conditions hold:
  \begin{defnlist}
    \item\label{item:endellionite}
    For every affine scheme $T$ and every morphism $T \to \sX$, there exists an index $\nu_0$ such that the morphism $\sU_{\nu_0} \fibprod_\sX T \to T$ admits Nisnevich-local sections.

    \item\label{item:neuropathology}
    For every index $\nu$, there exists $\mu>\nu$ such that the transition map $\sU_\nu \to \sU_\mu$ factors as follows:
    \[\begin{tikzcd}
      \sV_\nu\setminus \sW_\nu \ar[hookrightarrow]{r}{(0,\id)}\ar[equals]{d}
      & \sV_\nu\fibprod_\sX \sV_\nu \setminus \sW_\nu\fibprod_\sX \sW_\nu \ar[dashed]{d}
      \\
      \sU_\nu \ar[hookrightarrow]{r}
      & \sU_\mu.
    \end{tikzcd}\]
  \end{defnlist}
\end{notat}

A presheaf $F : \Lis_\sX^\op \to \cV$ is \emph{$\A^1$-invariant} if for every $T \in \Lis_\sX$, the canonical map $F(T) \to F(T \times \A^1)$ is invertible.

\begin{prop}\label{prop:moneyed}
  Let $\sX\in\Stk$ and $\{\sU_\nu\}_\nu$ as in \notatref{notat:duces}.
  Let $F : \Lis_\sX^\op \to \cV$ be an $\A^1$-invariant Nisnevich sheaf and write $\overline{F} : \Lis_\sX^\op \to \cV \hook \Pro(\cV)$ for the composite with the canonical embedding (\remref{rem:proscolex}).
  Then there is a canonical isomorphism
  \begin{equation}\label{eq:Undine}
    \overline{F}^\lisse(\sX) \simeq \{F^\lisse(\sU_\nu)\}_\nu
  \end{equation}
  in $\Pro(\cV)$.
  In particular, the canonical morphisms $F^\lisse(\sX) \to F^\lisse(\sU_\nu)$ determine an isomorphism
  \begin{equation}\label{eq:seronegative}
    F^\lisse(\sX) \simeq \lim_\nu F^\lisse(\sU_\nu)
  \end{equation}
  in $\cV$.
\end{prop}

\begin{rem}
  Note that $\overline{F}^\lisse(\sX)$ need not be isomorphic to the constant pro-system $\{ F^\lisse(\sX) \}$.
  In particular, \propref{prop:moneyed} does not imply the existence of an isomorphism
  \begin{equation*}
    \{ F^\lisse(\sX) \}
    \simeq \{ F^\lisse(U_\nu) \}_\nu.
  \end{equation*}
  In fact, $\overline{F}^\lisse(\sX)$ may not be essentially constant at all, see \remref{rem:spearwood} for an example.
\end{rem}

The proof of \propref{prop:moneyed} will require the following lemmas.
We denote by $\Anima$ the \inftyCat of \inftyGrpds (or homotopy types).

\begin{lem}\label{lem:tawdrily}
  Let $\sX\in\Stk$ and $F : \Lis_\sX^\op \to \cV$ be a presheaf.
  Denote by
  \[
    F_! : \Fun(\Lis_\sX^\op, \Anima)
    \to \Pro(\cV)^\op
  \]
  the unique colimit-preserving functor which restricts to $\overline{F} : \Lis_\sX^\op \to \cV \hook \Pro(\cV)$ (notation as in \propref{prop:moneyed}).
  Let $A \to B$ be an \emph{$(\A^1,\Nis)$-local equivalence} in $\Fun(\Lis_\sX^\op, \Anima)$, i.e. a morphism such that the induced map
  \[ \Maps(B, G) \to \Maps(A, G) \]
  is invertible for every $\A^1$-invariant Nisnevich sheaf $G \in \Fun(\Lis_\sX^\op, \Anima)$.
  If $F$ satisfies $\A^1$-invariance and Nisnevich descent, then $F_!(A) \to F_!(B)$ is invertible.
\end{lem}
\begin{proof}
  By \cite[Prop.~5.5.4.15(4)]{LurieHTT}, a morphism $A \to B$ in $\Fun(\Lis_\sX^\op, \Anima)$ is an $(\A^1,\Nis)$-local equivalence if and only if it belongs to the strongly saturated class generated by the morphisms
  \begin{defnlist}
    \item\label{item:bitterwood}
    for every $T \in \Lis_\sX$, the projection $T \times \A^1 \to T$;
    \item\label{item:sclerotica}
    \newcounter{counter:sclerotica}
    \setcounter{counter:sclerotica}{\value{defnlisti}}
    for every Nisnevich covering family $(T'_\alpha \to T)_\alpha$ in $\Lis_\sX$, the morphism $\colim T'_\bullet \to T$ where $T'_\bullet : \bDelta^\op \to \Fun(\Lis_\sX^\op, \Anima)$ is the \v{C}ech nerve of $\coprod_\alpha T'_\alpha \to T$.
  \end{defnlist}
  By (the proof of) \cite[Thm.~2.2.7]{KhanLocalization}, \itemref{item:sclerotica} may be replaced by the following class:
  \begin{defnlistbis}[start=\value{counter:sclerotica}]
    \item\label{item:scleroticabis} the morphism from the initial presheaf to the presheaf represented by the empty scheme; and for every étale morphism $V \to T$ in $\Lis_\sX$ which is an isomorphism away from a cocompact closed subset $K \sub \abs{T}$, the morphism $V \coprod_W U \to T$ in $\Fun(\Lis_\sX^\op, \Anima)$, where $U = T\setminus K$ and $W = V \fibprod_T U$.
  \end{defnlistbis}
  Since $F_!$ preserves colimits, it will thus suffice to show that it inverts morphisms of type \itemref{item:bitterwood} and \itemref{item:scleroticabis}.
  For $T \in \Lis_\sX$, $F_!$ sends $T \times \A^1 \to T$ to the morphism of constant pro-objects
  \[ \{F(T)\} \to \{F(T\times\A^1)\} \]
  which is invertible since $F$ is $\A^1$-invariant.
  Similarly, $F_!$ sends $V \coprod_W U \to T$ as in \itemref{item:scleroticabis} to the morphism of pro-objects
  \[ \{F(T)\} \to \{F(V)\} \fibprod_{\{F(W)\}} \{F(U)\}. \]
  Since finite limits in $\Pro(\cV)$ are computed levelwise (\remref{rem:proscolex}), this is identified with the morphism of constant pro-objects $$\{F(T)\} \to \{F(V) \fibprod_{F(W)} F(U)\},$$ which is invertible since $F$ satisfies Nisnevich descent (and by \cite[Thm.~2.2.7]{KhanLocalization}).
\end{proof}

\begin{lem}\label{lem:adstipulate}
  Let $X = \sX$ be an affine scheme and $\{U_\nu\}_\nu = \{\sU_\nu\}_\nu$ be as in \notatref{notat:duces}.
  Suppose there exists an index $\nu_0$ such that $U_{\nu_0} \to X$ admits a section.
  Then the presheaf $U_\infty := \colim_\nu U_\nu$ (where the colimit is taken in presheaves) is $\A^1$-contractible on smooth affine $X$-schemes.
\end{lem}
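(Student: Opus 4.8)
The plan is to show that the presheaf $U_\infty = \colim_\nu U_\nu$ on smooth affine $S$-schemes admits an $\A^1$-homotopy contracting it onto the image of the section $s : S \to U_{\nu_0}$. The main input is condition \itemref{item:neuropathology} of \notatref{notat:duces}, which produces, for every $\nu$, an index $\beta > \nu$ and a factorization of the transition map $U_\nu \to U_\beta$ through the ``doubled'' space $V_\nu \fibprod_S V_\nu \setminus W_\nu\fibprod_S W_\nu$ via the map $(0,\id)$. The point of that factorization is that inside $V_\nu \fibprod_S V_\nu \setminus W_\nu \fibprod_S W_\nu$ there is an obvious $\A^1$-homotopy: the straight-line path $t \mapsto (t\cdot x_1, x_2)$ (scaling the first coordinate) connects $(0,\id)$ to $(\id,\id)$, and it stays inside the complement of $W_\nu \fibprod_S W_\nu$ because the second coordinate never enters $W_\nu$. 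Composing with the projection $V_\nu\fibprod_S V_\nu \setminus W_\nu\fibprod_S W_\nu \to U_\beta$ (the dashed arrow), this exhibits the transition map $U_\nu \hook U_\beta$ as $\A^1$-homotopic to the constant map at the point $(0,\id)$ followed into $U_\beta$, i.e. to a map that factors through the zero section of $V_\nu$, hence through $U_\nu \cap (\text{zero section})$.

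First I would make this precise on $T$-points for a smooth affine $S$-scheme $T$: a $T$-point of $U_\infty$ is, by filteredness and since $T$ is quasi-compact (affine), a $T$-point of some $U_\nu$; applying the above to a large enough $\beta$ produces an explicit $\A^1$-homotopy in $U_\beta \subseteq U_\infty$ from that point to a point of the zero section of $V_\nu$ over $T$. Next I would iterate: a point of the zero section of $V_\nu$ maps, under a further transition map, into $U_{\nu_0}$-territory where the section $s : S \to U_{\nu_0}$ lives, and one more application of the scaling homotopy (or a direct comparison, since over the zero section everything is controlled by $S$-points) contracts it onto $s$. Concretely I would assemble, for each $\nu$, a homotopy $H_\nu : U_\nu \times \A^1 \to U_\infty$ from the inclusion to the constant map at $s$, compatibly in $\nu$ up to the reindexing $\nu \mapsto \beta$, and then pass to the colimit to get a homotopy $H : U_\infty \times \A^1 \to U_\infty$ witnessing $\A^1$-contractibility. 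The required naturality/compatibility of the $H_\nu$ is what makes the colimit homotopy well-defined; this should follow from writing the homotopies as restrictions of a single scaling homotopy on the ambient vector bundles $V_\nu$.

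The main obstacle I anticipate is bookkeeping with the indices and the compatibility of the homotopies across transition maps: the factorization in \itemref{item:neuropathology} only holds after passing from $\nu$ to some $\beta > \nu$, so the naive homotopies do not strictly commute with the transition maps but only after reindexing, and one must check this is enough to glue them in the (filtered) colimit. A secondary point is ensuring the straight-line homotopy $t\mapsto(t x_1, x_2)$ genuinely lands in $V_\nu\fibprod_S V_\nu \setminus W_\nu\fibprod_S W_\nu$ — this uses that $W_\nu$ contains the zero section (so the second coordinate $x_2$, which lies in $V_\nu \setminus W_\nu$, keeps the pair out of $W_\nu\fibprod_S W_\nu$ regardless of $t$), exactly the hypothesis built into \notatref{notat:duces}. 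Once the contraction is built, $\A^1$-contractibility of $U_\infty$ on smooth affine $S$-schemes is immediate, since $H$ restricts on $T$-points to a homotopy from $\id$ to the constant map at $s$, and this is natural in $T\in\Lis_S^{\aff}$ because $H$ is a morphism of presheaves.
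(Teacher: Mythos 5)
The proposal contains a concrete error and misses the key mechanism of the Morel--Voevodsky argument.

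First, the specific claim that the scaling homotopy contracts ``onto a map that factors through the zero section of $V_\nu$, hence through $U_\nu\cap(\text{zero section})$'' is wrong: by hypothesis the closed substack $W_\nu$ \emph{contains} the zero section, so the zero section is disjoint from $U_\nu = V_\nu\setminus W_\nu$. There is no ``$U_\nu\cap$ zero section'' to contract onto, and the constant map at $0\in V_\nu$ does not land in $U_\infty$ at all. Relatedly, with the embedding $U_\nu\hookrightarrow V_\nu\fibprod_S V_\nu\setminus W_\nu\fibprod_S W_\nu$ being $x\mapsto(0,x)$, the path $t\mapsto(tx,x)$ runs from $(0,x)$ to $(x,x)$ --- i.e. \emph{away} from the zero section --- and is not a contraction in any useful sense. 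Also, nothing forces the dashed arrows in condition~\itemref{item:neuropathology} to be chosen compatibly across $\nu$; since they are merely asserted to exist, the ``$H_\nu$'' homotopies have no reason to glue into a morphism of presheaves $H:U_\infty\times\A^1\to U_\infty$, and the compatibility issue you flag as ``bookkeeping'' is in fact an obstruction to this whole strategy.

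More fundamentally, the paper's proof does \emph{not} attempt to build a strict presheaf-level $\A^1$-homotopy contraction; there is no reason such a thing should exist. Instead it shows directly that the $\A^1$-singular simplicial set $\Maps(X\times\A^\bullet,U_\infty)\simeq\colim_\nu\Maps_{\Lis_S}(X\times\A^\bullet,U_\nu)$ is a contractible Kan complex for each affine $X\in\Lis_S$, which by \cite[Lem.~A.2.6]{EHKSY} reduces to an extension property: every $f:X\times\partial\A^n\to U_\nu$ extends to some $g:X\times\A^n\to U_\beta$. The extension is built using affineness to extend $f$ to $g':X\times\A^n\to V_\nu$ (a morphism into a vector bundle), and then --- this is the crucial trick your proposal misses --- constructing a \emph{second} map $g'':X\times\A^n\to V_\nu$ that vanishes on $X\times\partial\A^n$ but equals the given section $s$ on the ``bad locus'' $g'^{-1}(W_\nu)$. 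The pair $(g'',g')$ then avoids $W_\nu\fibprod_S W_\nu$ precisely because wherever one coordinate lands in $W_\nu$ the other does not, and composing with the dashed arrow gives the desired extension. No global or compatible choices are required, since for each fixed map only one choice of $\beta$ and one dashed arrow are needed. Your proposal never uses the section $s$ in this essential way, nor does it explain how to extend maps into $V_\nu$ (which is where affineness of $X$ and $S$ enters).
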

\begin{proof}
  The following argument is extracted from the proof of \cite[\S 4, Prop.~2.3]{MorelVoevodsky}.
  The claim is that the \inftyGrpd $\RGamma(T, \on{L}_{\A^1} U_\infty)$ is contractible for every affine $T \in \Lis_X$, where $\on{L}_{\A^1}$ denotes the $\A^1$-localization functor (see e.g. \cite[Proof of Prop.~C.6]{HoyoisLefschetz}), i.e. that the simplicial set
  \[
    \Maps_{\Fun(\Lis_X^\op,\Anima)}(T \times \A^\bullet, U_\infty)
    \simeq \colim_\nu \Maps_{\Lis_X}(T \times \A^\bullet, U_\nu)
  \]
  is a contractible Kan complex.
  By \cite[Lem.~A.2.6]{EHKSY} and closed gluing for the presheaf $U_\infty$, it is enough to show that for every $n\ge 0$ and every affine $T \in \Lis_X$, the restriction map
  \[ \Maps(T\times \A^n, U_\infty) \to \Maps(T\times\partial\A^n, U_\infty) \]
  is surjective on $\pi_0$, where we identify $\A^n$ with the closed subscheme of $\A^{n+1} = \Spec(\Z[T_0,\ldots,T_n])$ defined by $\sum_i T_i = 1$, and $\partial\A^n$ is the closed subscheme defined by the further equation $T_0\dots T_n = 0$.

  Let $\nu\ge\nu_0$ be an index.
  Denote by $s : X \to U_{\nu_0} \to U_\nu$ the induced section and by $t : T \to X \to U_\nu$ its composite with the structural morphism.
  The existence of $t$ shows the surjectivity for $n=0$.

  Let $n>0$ and $f : T \times \partial\A^n \to U_\nu$ a morphism over $X$.
  We claim that this extends to a morphism $g : T \times \A^n \to U_\mu$ for some index $\mu\ge\nu$.
  Since $T$ and $X$ are affine and $V_\nu$ is a vector bundle over $X$, there exists an $X$-morphism $g' : T\times \A^n \to V_\nu$ which restricts to $f$ on $T\times\partial\A^n$.
  Since $T\times\partial\A^n$ and $g'^{-1}(W_\nu)$ are disjoint as closed subschemes of $T\times\A^n$, there exists for the same reason an $X$-morphism $g'' : T\times\A^n \to V_\nu$ which restricts to $0$ on $T\times\partial\A^n$ and to
  $$g'^{-1}(W_\nu) \to X \xrightarrow{s} U_\nu \sub V_\nu$$
  on $g'^{-1}(W_\nu)$.
  By construction, the induced $X$-morphism
  \[
    (g'',g') :
    T\times\A^n \to V_\nu\fibprod_X V_\nu
  \]
  restricts to $(0,f) : T\times\partial\A^n \to V_\nu \fibprod_X V_\nu$, and factors through the complement of $W_\nu\fibprod_X W_\nu$.
  Let $\mu>\nu$ and the morphism $V_\nu\fibprod_X V_\nu \setminus W_\nu \fibprod_X W_\nu \to U_\mu$ be as in assumption~\ref{item:neuropathology}.
  Then the composite
  \[
    g : T\times\A^n \xrightarrow{(g'',g')} V_\nu\fibprod_T V_\nu \setminus W_\nu \fibprod_X W_\nu \to U_\mu
  \]
  fits into the commutative diagram
  \[\begin{tikzcd}
    T\times\partial\A^n \ar{r}{f}\ar[hookrightarrow]{d}
    & U_\mu \ar{d}
    \\
    T\times\A^n \ar{r}\ar[dashed]{ru}{g}
    & X
  \end{tikzcd}\]
  as desired.
\end{proof}

\begin{proof}[Proof of \propref{prop:moneyed}]
  The second isomorphism \eqref{eq:seronegative} follows from \eqref{eq:Undine} by applying the limit-preserving functor $\Pro(\cV) \to \cV$ \eqref{eq:inexpugnable}.
  By \lemref{lem:tawdrily} it will suffice to show that $\colim_\nu \sU_\nu \to \sX$ is an $(\A^1,\Nis)$-local equivalence in $\Fun(\Lis_\sX^\op, \Anima)$.
  By universality of colimits, this morphism is identified with the colimit over $T \in \Lis_\sX$ of the base changes
  \[ \colim_\nu \sU_\nu \fibprod_\sX T \to T. \]
  Since local equivalences are preserved by the colimit-preserving functor $\Fun(\Lis_T^\op, \Anima) \to \Fun(\Lis_\sX^\op, \Anima)$ sending $U \in \Lis_T$ to $U \in \Lis_\sX$, and because the data and assumptions in \notatref{notat:duces} are stable under base change, we may thus replace $\sX$ by $T$ to assume that it is a scheme.
  We can moreover assume that $X := \sX$ is affine, arguing similarly using the fact that it can be written as a colimit of affines up to local equivalence.
  Finally, we may also assume up to local equivalence that $U_{\nu_0} \to X$ admits a section for some index $\nu_0$ (by condition~\itemref{item:endellionite}).
  Now the claim follows from \lemref{lem:adstipulate}.
\end{proof}

\begin{rem}\label{rem:latticed}
  In \notatref{notat:duces}, a sufficient condition for \itemref{item:endellionite} is that for every field $\kappa$ and every $\kappa$-valued point $s : \Spec(\kappa) \to \sX$, there exists an index $\nu_s$ and a lift $\Spec(\kappa) \to \sU_{\nu_s}$.
  Indeed, let us show that if $\sX$ is affine then there exists an index $\nu_0$ such that $\sU_\nu \to \sX$ admits Nisnevich-local sections.
  The assumption implies that the disjoint union $\coprod_\nu \sU_\nu \to \sX$ is a smooth morphism (not necessarily of finite type) which is surjective on field-valued points.
  By \lemref{lem:acrimonious}, there exists an affine scheme $X$ and a Nisnevich cover $X \twoheadrightarrow \sX$ along which the base change $\coprod_\nu \sU_\nu \fibprod_\sX X \to X$ admits a section.
  Since $X$ is quasi-compact, there is a finite subset $I$ of indices through which the section factors.
  Any section of $\sU_\nu$ gives rise to a section of $\sU_\mu$ for any $\mu>\nu$ (by composition with the transition map), so we may assume that $I$ consists of a single index $\nu_0$.
\end{rem}


\section{The Borel construction}
\label{sec:borel}

We now specialize our general results from the previous sections to the case of the Borel construction.
There are various interpretations of the latter in the algebraic category (compare \cite[1.1]{Lusztig}, \cite[\S 1]{TotaroChow}, \cite[\S 4.2]{MorelVoevodsky}, \cite[Def.~10]{HellerMalagonLopez}).
We will use the following variant.

\begin{notat}\label{notat:islandlike}
  Let $S$ be an algebraic space, locally of finite type over $k$, and $G$ an fppf group scheme over $S$.
  We construct an $\bN_{>0}$-indexed tower $\{U_\nu\}_\nu$ where each $U_\nu$ is an algebraic space of finite type over $S$ with a free $G$-action, and the transition maps are $G$-equivariant closed immersions.
  The construction will make use of the following auxiliary choices:
  \begin{defnlist}
    \item\label{item:islandlike/G}
    An embedding of $G$ as a closed subgroup scheme of $\GL_S(\sE)$ for some finite locally free sheaf $\sE$ on $S$.
    Write $V = \V_S(\sE)$ for the associated vector bundle over $S$ with its induced $G$-action.

    \item\label{item:islandlike/U}
    A open $U \sub V$ on which $G$ acts freely, whose (reduced) complement $W \sub V$ is of fibre dimension over $S$ \sssecref{sssec:fibdim} strictly smaller than that of $V$ over $S$.
  \end{defnlist}
  For every integer $\nu>0$, set $V_\nu := V^{\times \nu}$ and $W_\nu := W^{\times \nu}$ (where both powers are fibred over $S$), and let $U_\nu \sub V_\nu$ be the complement of $W_\nu$.
  The diagonal action of $G$ on $V^{\times \nu}$ restricts to a free action on $U_\nu$.
  The transition map $U_\nu \hook U_{\nu+1}$ is defined by restricting the $G$-equivariant closed immersion $(\id, 0) : V^{\times \nu} \hook V^{\times \nu} \fibprod_S V = V^{\times \nu+1}$.
  
  Since the quotients $[U_\nu/G]$ are algebraic spaces (as the actions are free), we will also write $U_\nu/G := [U_\nu/G]$.
\end{notat}

\begin{exam}
  Suppose $G$ is embeddable, in the sense that some embedding $G \sub \GL_S(\sE)$ as in \itemref{item:islandlike/G} exists (e.g. $S$ is the spectrum of a field and $G$ is a linear algebraic group).
  Then the choice of some $U$ as in \itemref{item:islandlike/U} also exists, see e.g. \cite[Rem.~1.4]{TotaroChow}.
\end{exam}

\begin{rem}\label{rem:borelfeatures}
  The following two features of the construction in \notatref{notat:islandlike} will be useful later:
  \begin{defnlist}
    \item
    For each $\nu$ consider the function
    \[
      c_\nu := d_{V_\nu/S}|_{W_\nu} - d_{W_\nu/S}
    \]
    where $d_{V_\nu/S}$ and $d_{W_\nu/S}$ denote the fibre dimensions of $V_\nu$ and $W_\nu$ over $S$, respectively.\footnote{%
      See \sssecref{sssec:fibdim}.
      By \lemref{lem:dimcodim}, $c_\nu$ can be computed at any point of $W_\nu$ as the fibrewise codimension.
    }
    We have $c_\nu = \nu \cdot c_1$ (this follows from \cite[IV\textsubscript{2}, Cor.~4.1.5]{EGA}), so in particular $c_\nu$ tends to $\infty$ as $\nu$ increases.

    \item
    For all indices $\mu>\nu$, the open immersion $U_\nu \fibprod_S V_{\mu-\nu} \hook V_\nu \fibprod_S V_{\mu-\nu} = V_\mu$ factors through the open $U_\mu \sub V_\mu$.
    In other words, the transition map $U_\nu \hook U_\mu$ factors as follows:
    \[\begin{tikzcd}
      U_\nu \ar[hookrightarrow]{r}{(\id,0)}\ar[hookrightarrow]{d}
      & U_\nu\fibprod_S V_{\mu-\nu} \ar[hookrightarrow]{d}\ar[dashed]{ld}
      \\
      U_\mu \ar[hookrightarrow]{r}
      & V_\mu = V_\nu \fibprod_S V_{\mu-\nu}.
    \end{tikzcd}\]
  \end{defnlist}
\end{rem}

\begin{notat}
  Let $\Stk_S^G$ denote the \inftyCat of locally of finite type Artin stacks $X$ over $S$ with $G$-action.
  This is equivalent to the \inftyCat $\Stk_{BG}$ of locally of finite type Artin stacks $\sX$ over $BG = [S/G]$ via the assignment $X \mapsto \sX = [X/G]$.
  For $X \in \Stk_S^G$, we write $$X \fibprod^G_S U_\nu := [X/G] \fibprod_{BG} (U_\nu/G)$$ for each $\nu$.
  This is representable and of finite type over $[X/G]$.
\end{notat}

\begin{rem}\label{rem:unsainted}
  If $X$ is a quasi-projective scheme over $S$ with a linearized $G$-action, then each $X \fibprod^G_S U_\nu$ is a quasi-projective $S$-scheme (see \cite[Prop.~7.1]{MumfordFogartyKirwan}).
\end{rem}

Applying the results of \secref{sec:approx}, we obtain:

\begin{thm}\label{thm:gawkiness}
  Let $X \in \Stk^G_S$ and let $F : \Lis_{[X/G]}^\op \to \cV$ be an $\A^1$-invariant Nisnevich sheaf with values in an \inftyCat $\cV$ with limits.
  Then there is a canonical isomorphism
  \begin{equation}
    F^\lisse([X/G]) \simeq \lim_\nu F^\lisse(X\fibprod_S^G U_\nu)
  \end{equation}
  in $\cV$.
\end{thm}
\begin{proof}
  The filtered diagram $\{U_\nu\}_\nu$ over $S$ satisfies the assumptions of \notatref{notat:duces} (compare \cite[\S 4, Ex.~2.2]{MorelVoevodsky} and \remref{rem:latticed}).
  Moreover, the transition maps and the maps $U_\nu \hook V_\nu$ are all $G$-equivariant, so the same holds for the quotient $\{U_\nu/G\}_\nu$ over $BG = [S/G]$.
  By base change it also holds for $\{X\fibprod^G_S U_\nu\}_\nu$ over $[X/G]$.
  Thus the claim follows from \propref{prop:moneyed}.
\end{proof}

\begin{cor}\label{cor:hemiataxy}
  Let $F : \Sch^{\lci,\op} \to \Spt$ be an $\A^1$-invariant Nisnevich sheaf of spectra.
  Then for every $X\in \Stk_S^G$, there is a canonical isomorphism of spectra
  \begin{equation*}
    F^\lisse([X/G]) \to \lim_\nu F^\lisse(X\fibprod_S^G U_\nu),
  \end{equation*}
  where $F^\lisse : \Stk^{\lci,\op} \to \Spt$ denotes the lisse extension (\defnref{defn:uncircumspectly}).
\end{cor}
\begin{proof}
  Given $X\in\Stk_S^G$, we may replace $F$ by its restriction to $\Lis_{[X/G]}$ in view of \propref{prop:entropionize}.
  Then we conclude by applying \thmref{thm:gawkiness}.
\end{proof}


\section{Pro-approximation in weaves}
\label{sec:acyclic}

\ssec{Sheaf cohomology}

  We now turn our attention to cohomology theories represented by a sheaf in some category of coefficients.
  That is, let $\D : \Sch^\op \to \InftyCat$ be a Nisnevich sheaf of \inftyCats and consider its lisse extension $\D^\lisse : \Stk^\op \to \InftyCat$ as in \defnref{defn:uncircumspectly}, so that
  \[ \D^\lisse(\sX) = \lim_{(T,t) \in \Lis_\sX} \D(T) \]
  for any $\sX \in \Stk$.
  We assume that $\D$ is lax symmetric monoidal (hence so is $\D^\lisse$), so that $\D^\lisse(\sX)$ is symmetric monoidal for every $\sX \in \Stk$, with monoidal unit denoted $\un_\sX$.

  In this situation, \thmref{thm:gawkiness} yields:

  \begin{cor}\label{cor:limoid}
    Assume that the sheaf of \inftyCats $\D$ is \emph{locally $\A^1$-invariant}\footnote{%
      I.e., $\id \to \pi_*\pi^*$ is invertible for the projection $\pi : X \times \A^1 \to X$ and every $X \in \Sch$.
    }.
    Let $G$ and $\{U_\nu\}_\nu$ be as in \notatref{notat:islandlike}.
    For every $X \in \Stk^G_S$ and $\sF \in \D^\lisse([X/G])$, the canonical morphism of spectra
    \begin{equation*}
      \RGamma([X/G], \sF) \to \lim_\nu \RGamma(X \fibprod_S^G U_\nu, \sF)
    \end{equation*}
    is invertible.
  \end{cor}
  \begin{proof}
    Given $X \in \Stk_S^G$ and $\sF \in \D^\lisse([X/G])$, consider the presheaf $F : \LisStk_{[X/G]}^\op \to \Spt$ defined by the assignment
    \begin{equation*}
      (T, t : T \to [X/G]) \mapsto \RGamma(T, \sF) := \Maps_{\D^\lisse(T)}(\un_T, t^*(\sF)).
    \end{equation*}
    This is lisse-extended from its restriction $F|_{\Lis_{[X/G]}}$, which is an $\A^1$-invariant Nisnevich sheaf by assumption.
    The claim now follows from \thmref{thm:gawkiness}.
  \end{proof}

  In this section, our goal is to address the analogue of \corref{cor:limoid} at the level of hypercohomology, i.e., invertibility of the morphisms
  \begin{equation*}
    \H^i([X/G], \sF) \to \lim_\nu \H^i(X \fibprod_S^G U_\nu, \sF)
  \end{equation*}
  for $i \in \Z$.
  This will require us to pass to a slightly more involved setup.

\ssec{Weaves}

  Let $\D$ be a \emph{weave} on $\Sch$ in the sense of \cite{Weaves,Weavelisse}, and consider its lisse extension $\D^\lisse$.
  By \cite[\S 7]{Weavelisse}, this defines a weave on $\Stk$ in which all morphisms are shriekable.

  Informally speaking, the weave $\D^\lisse$ amounts to a collection of \inftyCats $\D^\lisse(\sX)$ for every $\sX \in \Stk$, adjoint pairs of functors $(f^*,f_*)$ and $(f_!,f^!)$ for every morphism $f$ in $\Stk$, and a homotopy coherent system of various compatibilities between these operations.
  For example, we have the base change and projection formulas, Poincaré duality isomorphisms $f^! \simeq f^*\vb{\Omega_f}$ when $f$ is smooth\footnote{%
    Here $\vb{\cE}$ denotes the Thom twist by a finite locally free sheaf (or perfect complex) $\cE$, see \cite[\S 2.6]{Weaves}.
    An orientation of $\D$ in the sense of \cite[\S 2.7]{Weaves} determines Thom isomorphisms $\vb{\cE} \simeq \vb{\cO^{\oplus r}} \simeq (r)[2r]$, where $r$ is the (virtual) rank of $\cE$ and $(r) := \vb{\cO^{\oplus r}}[-2r]$ is the Tate twist.
  }, and ``forget supports'' isomorphisms $f_! \simeq f_*$ for $f$ proper representable.

  We will assume that $\D$ is \emph{topological} as in \cite[\S 2]{Weaves}, meaning that it satisfies homotopy invariance for vector bundles and localization for closed-open decompositions.
  Sometimes, notably in \corref{cor:outcourt}, we will also assume that $\D$ satisfies \emph{topological invariance}, i.e., that for a finite radicial surjection $f$, the functor $f^*$ is an equivalence (cf. \cite{topinv}).
  When $\D$ also satisfies continuity, this implies that that $f^*$ is an equivalence for any universal homeomorphism.
  These conditions persist to the lisse extension $\D^\lisse$.

  The classical examples are the Betti and étale weaves, which we will specialize to in \secref{sec:betet}.
  There are also various flavours of motivic weaves, such as the weave of sheaves of motivic spectra which we will come to in \secref{sec:sh}.

  \ssec{The cohomological t-structure}
  \label{ssec:acyclic/t}

  We define the cohomological t-structure on a topological weave $\D$.

  \begin{defn}
    Let $X \in \Sch$.
    Denote by $\D(X)_{\ge n} \sub \D(X)$ the full subcategory generated under colimits and extensions by objects of the form $a_!a^!(\un)(q)[n]$, where $a : T \to X$ is a smooth morphism from a scheme and $q\in\Z$, and by $\D(X)_{\le n} \sub \D(X)$ the full subcategory spanned by $\sF \in \D(X)$ for which the spectrum of derived global sections $\RGamma(T, \sF(q))$ is $n$-coconnective for all $q\in\Z$ and all smooth $X$-schemes $T$.
    The pair $(\D(X)_{\ge 0}, \D(X)_{\le -1})$ of orthogonal subcategories defines a t-structure on $\D(X)$ by \cite[Prop.~1.4.4.11]{LurieHA}.
  \end{defn}

  We say that $\sF \in \D(X)$ is \emph{$n$-connective}, resp. \emph{$n$-coconnective}, if it belongs to $\D(X)_{\ge n}$, resp. $\D(X)_{\le n}$.
  We say that $\sF$ is \emph{eventually connective}, resp. \emph{eventually coconnective}, if it belongs to $\D(X)_{>-\infty} = \bigcup_n \D(X)_{\ge n}$, resp. $\D(X)_{<\infty} = \bigcup_n \D(X)_{\le n}$.

  Note that, for a morphism $f : X' \to X$ in $\Sch$, the functor $f^*$ is right t-exact, i.e., preserves connectivity.
  If $f$ is smooth, $f^* \simeq f^! \vb{-\Omega_f}$ has a left adjoint $f_! \vb{\Omega_f}$.
  By the defintions, the latter is right t-exact.
  Hence by adjunction $f^*$ is also \emph{left} t-exact in this case, i.e., also preserves coconnectivity.

  We extend the cohomological t-structure to stacks as follows:

  \begin{prop}
    Let $\sX \in \Stk$.
    There exists a unique t-structure on the stable \inftyCat $\D^\lisse(\sX)$ such that $\sF \in \D^\lisse(\sX)$ belongs to $\D^\lisse(\sX)_{\le n}$, resp. $\D^\lisse(\sX)_{\ge n}$, if and only if for every $(T,t : T \to \sX) \in \Lis_\sX$, the object $t^*(\sF)$ belongs to  $\D(T)_{\le n}$, resp. $\D(T)_{\ge n}$.
  \end{prop}
  \begin{proof}
    By definition, we have equivalences
    \begin{equation*}
      \D^\lisse(\sX) \simeq \lim_t \D(T),
      \qquad \D^\lisse(\sX)_{\ge 0} \simeq \lim_t \D(T)_{\ge 0}
    \end{equation*}
    where the limits are taken over pairs $(T, t : T \to \sX) \in \Lis_\sX$ and the transition functors are $t^*$.
    Since the latter are t-exact, they restrict to left exact functors on the subcategories $\D(T)_{\ge 0}$.
    Moreover, each $\D(T)_{\ge 0}$ is a Grothendieck prestable \inftyCat by \cite[Prop.~C.1.4.1]{LurieSAG}.
    In this situation \cite[Prop.~C.3.2.4]{LurieSAG} implies that the limit $\D^\lisse(\sX)_{\ge 0}$ is also Grothendieck prestable and that the functors $t^* : \D^\lisse(\sX)_{\ge 0} \to \D(T)_{\ge 0}$ are left exact and jointly conservative.
    Passing back to stabilizations, it follows from \cite[Cor.~C.3.2.5, Prop.~C.1.4.1]{LurieSAG} that $\D^\lisse(\sX)$ admits a t-structure whose connective part is $\D^\lisse(\sX)_{\ge 0}$ and such that the functors $t^* : \D^\lisse(\sX) \to \D(T)$ are t-exact and jointly conservative.
    The latter implies that an object $\sF \in \D^\lisse(\sX)$ belongs to the coconnective part of the t-structure if and only if $t^*(\sF) \in \D(T)_{\le 0}$ for every $(T,t)\in\Lis_\sX$.
  \end{proof}
  
  \begin{thm}\label{thm:tstruct}
    Let $f : \sX' \to \sX$ be a morphism in $\Stk$.
    \begin{thmlist}
      \item\label{item:tstructure/starpull}
      The functor $f^*$ is right t-exact.
      If $f$ is smooth, $f^*$ is also left t-exact.

      \item
      The functor $f_*$ is left t-exact.

      \item
      If $f$ is smooth of relative dimension $d$, then $f^![-2d]$ is left t-exact and $f_![2d]$ is right t-exact.

      \item\label{item:tstruct/!}
      Suppose $\D$ satisfies topological invariance and continuity.
      If $f$ is of fibre dimension $\le d$ \sssecref{sssec:fibdim}, then $f^![-2d]$ is left t-exact and $f_![2d]$ is right t-exact.

      \item
      For every K-theory class $v \in \K(\sX)$ of virtual rank $r$, the shifted Thom twist $\vb{v}[-2r] : \D^\lisse(\sX) \to \D^\lisse(\sX)$ is t-exact.
    \end{thmlist}
  \end{thm}
  \begin{proof}
    That $f^*$ is right t-exact (so that $f_*$ is left t-exact by adjunction) follows easily from the case of schemes.
    We deduce by Poincaré duality that $f^*$ is also left t-exact when $f$ is smooth.

    The statement about Thom twists $\vb{v}[-2r]$ can be checked smooth-locally on $\sX$ (since $*$-inverse image along smooth morphisms is t-exact), so we may assume the K-theory class $v$ can be represented as a difference of finite locally frees.
    In this case we reduce to showing that $\vb{r}[-2r] = (r)$ (Tate twist) is t-exact.
    This is clear since it is evidently left t-exact, and admits a left adjoint $(-r)$ which is also left t-exact.

    For $f : \sX' \to \sX$ smooth of relative dimension $d$ we deduce that $f^! \simeq f^* \vb{\Omega_f}$ sends $n$-coconnective objects to $(n+2d)$-coconnective objects, and its left adjoint $f_!$ sends $n$-connective objects to $(n-2d)$-connective objects.
    
    If $f : \sX' \to \sX$ is a morphism of fibre dimension $\le d$, then the same holds assuming that $\D$ satisfies topological invariance and continuity.
    Indeed, take an object $\sF \in \D(\sX')_{\ge n}$ and let us show that $f_!(\sF)$ is $(n-2d)$-connective.
    Replacing $f$ by its base change along some smooth atlas $X \twoheadrightarrow \sX$, we may assume that $X = \sX$ is a scheme.
    By \cite[Prop.~B.3]{BachmannHoyois} it will suffice to show that $x^*f_!(\sF)$ is $(n-2d)$-connective for every point $x : \Spec(k(x)) \to X$.
    By the base change formula we may thus reduce further to the case where $X$ is the spectrum of a field $\kappa$, and by topological invariance and continuity that it is moreover perfect.
    By nilpotent invariance we may also assume that $\sX'$ is reduced.
    Then $\sX'$ admits a dense open which is smooth over $\Spec(\kappa)$ (take a smooth atlas by a scheme and take the image of the smooth locus of the latter), so by noetherian induction and the localization triangle we reduce to the case where $f : \sX' \to \Spec(\kappa)$ is smooth.
  \end{proof}
  
  Note that an immersion $i : \sY \hook \sX$ has fibre dimension $\le 0$, so \thmref{thm:tstruct}\itemref{item:tstruct/!} only yields that $i_!$ is right t-exact and $i^!$ is left t-exact.
  Nevertheless, we have:

  \begin{cor}\label{cor:tstructi^!}
    Suppose $\D$ satisfies topological invariance and continuity.
    Let $f : \sY \to \sX$ be a smooth morphism in $\Stk$ and $i : \sZ \hook \sY$ a closed immersion.
    Let $d_{\sY/\sX}$ and $d_{\sZ/\sX}$ denote the fibre dimensions \sssecref{sssec:fibdim} and write $c = d_{\sY/\sX}|_\sZ - d_{\sZ/\sX}$.\footnote{%
      We regard $c$ as a locally constant function on $\sZ$.
    }
    Then $i^!f^*[2c]$ is left t-exact.
  \end{cor}
  \begin{proof}
    By Poincaré duality for the smooth morphism $f$, we have $i^!f^*[2c] \simeq g^!\vb{-\Omega_f}[2c]$, where $g = f\circ i$.
    The claim is that for every $0$-coconnective $\sF \in \D^\lisse(\sX)$, $g^!(\sF)\vb{-\Omega_f}$ is $(-2c)$-coconnective.
    By \thmref{thm:tstruct}, $g^![-2d_{\sZ/\sX}]$ and $\vb{-\Omega_f}[2d_{\sY/\sX}]$ are left t-exact.
    In particular, $g^!(\sF)\vb{-\Omega_f}$ is $(2d_{\sZ/\sX}-2d_{\sY/\sX})$-coconnective if $\sF$ is $0$-coconnective.
  \end{proof}

\ssec{Acyclic morphisms}

  The following terminology is inspired by \cite[Exp.~XV, D\'ef.~1.7]{SGA4}:

  \begin{defn}
    Let $f : \sY \to \sX$ be a morphism in $\Stk$.
    We say that $f$ is:
    \begin{defnlist}
      \item
      \emph{$n$-acyclic}, for some $n\in\Z$, if the unit morphism $\sF \to f_*f^*(\sF)$ is $n$-coconnective\footnote{%
        A morphism is \emph{$n$-(co)connective} if its fibre is $n$-(co)connective.
      } for every $\sF \in \D^\lisse(\sX)_{<\infty}$;

      \item
      \emph{acyclic} if it is $n$-acyclic for every $n$.
    \end{defnlist}
  \end{defn}

  For example, any vector bundle projection is acyclic (by homotopy invariance).

  \begin{rem}
    Let $f : \sY \to \sX$ be a morphism in $\Stk$.
    \begin{defnlist}
      \item
      If $f$ is $n$-acyclic, then for every $\sF \in \D(\sX)_{<\infty}$ the map
      \[
        f^* : \RGamma(\sY, \sF) \to \RGamma(\sX, \sF)
      \]
      is injective on $\pi_{n+1}$ and bijective on $\pi_i$ for $i \ge n+2$.
      In other words, the map
      \[
        f^* : \H^i(\sY, \sF) \to \H^i(\sX, \sF)
      \]
      is injective is injective for $i=-n-1$ and bijective for $i\le -n-2$.

      \item
      If the cohomological t-structure on $\D^\lisse(\sX)$ is left-complete, then $f$ is $n$-acyclic if and only if $\sF \to f_*f^*(\sF)$ is an isomorphism for \emph{every} $\sF \in \D^\lisse(\sX)$.
      
      \item
      If the cohomological t-structure on $\D^\lisse(\sX)$ is right-separated (e.g. right-complete), then $f$ is acyclic if and only if $\sF \to f_*f^*(\sF)$ is an isomorphism for every $\sF \in \D^\lisse(\sX)_{<\infty}$.
    \end{defnlist}
  \end{rem}

\ssec{Pro-acyclic morphisms}

  \begin{defn}
    Let $\sX \in \Stk$ and let $\sY := \{\sY_\nu\}_\nu$ be a sequential diagram in $\Stk_\sX$ with structural morphisms $f_\nu : \sY_\nu \to \sX$.
    We say that $f := \{f_\nu\}_\nu : \sY \to \sX$ is:
    \begin{defnlist}
      \item
      \emph{$n$-acyclic} if for every $\sF \in \D^\lisse(\sX)_{<\infty}$, the canonical morphism in $\D^\lisse(\sX)$
      \[ \sF \to \lim_\nu f_{\nu,*}f_\nu^*(\sF) \]
      is $n$-coconnective;
      
      \item
       \emph{acyclic} if it is $n$-acyclic for all $n\in\Z$;
      
      \item
      \emph{$n$-pro-acyclic} if for every $\sF \in \D^\lisse(\sX)_{<\infty}$, there exists an index $\nu(n)$ such that the morphisms
      \[ \sF \to f_{\nu,*}f_\nu^*(\sF) \]
      are $n$-coconnective for all $\nu \ge \nu(n)$;

      \item
      \emph{pro-acyclic} if it is $n$-pro-acyclic for all $n\in\Z$.
    \end{defnlist}
    Since coconnectivity is stable under limits, $n$-pro-acyclic morphisms are $n$-acyclic.
  \end{defn}

  \begin{exam}
    If each $f_\nu : \sY_\nu \to \sX$ is $n$-acyclic for all $\nu$, then $f = \{f_\nu\}_\nu$ is $n$-pro-acyclic, hence $n$-acyclic.
    In fact, it suffices that for every $n\in\Z$ there exists an index $\nu(n)$ such that $f_\nu$ is $n$-acyclic for all $\nu\ge\nu(n)$.
  \end{exam}

  \begin{rem}\label{rem:proacycunravel}
    Suppose $f := \{f_\nu\}_\nu : \sY \to \sX$ is $n$-pro-acyclic.
    For every $\sF \in \D^\lisse(\sX)_{<\infty}$, it follows then from the definition of the cohomological t-structure that there exists an index $\nu(n)$ such that for all $\nu\ge\nu(n)$, the map
    \begin{equation}\label{eq:oyjgknnk}
      f_\nu^* : \RGamma(\sX, \sF) \to \RGamma(\sX, f_{\nu,*} f_\nu^* \sF ) \simeq \RGamma(\sY_\nu, \sF)
    \end{equation}
    has $n$-coconnective fibre, for all $\nu\ge\nu(n)$.
    In other words, \eqref{eq:oyjgknnk} is injective on $\pi_{n+1}$ and bijective on $\pi_i$ for $i \ge n+2$, i.e., that
    \begin{equation*}
      f_\nu^* : \H^i(\sX, \sF) \to \H^i(\sY_\nu, \sF)
    \end{equation*}
    is injective for $i=-n-1$ and bijective for $i\le -n-2$.
    Thus if $f$ is pro-acyclic, then for every integer $i\in\Z$ the maps
    \begin{equation}
      f_\nu^* : \H^i(\sX, \sF) \to \H^i(\sY_\nu, \sF)
    \end{equation}
    become invertible for sufficiently large $\nu$, and in particular
    \begin{equation}
      \H^i(\sX, \sF) \to \lim_\nu \H^i(\sY_\nu, \sF)
    \end{equation}
    is invertible for every $i\in\Z$.
  \end{rem}

  \begin{prop}\label{prop:spectroheliograph}
    Let $\sX \in \Stk$ and let $\{f_\nu : \sY_\nu \to \sX\}_\nu$ be a sequential diagram in $\Stk_\sX$.
    If $\{f_\nu\}_\nu$ is $n$-acyclic (resp. $n$-pro-acyclic), then for every smooth morphism $\sX' \to \sX$ in $\Stk$, so is the base change $\{f'_\nu : \sY_\nu \fibprod_\sX \sX' \to \sX'\}_\nu$.
  \end{prop}
  \begin{proof}
    Follows from the smooth base change formula and the fact that $*$-inverse image along smooth morphisms is t-exact.
  \end{proof}

\ssec{Pro-acyclicity of the Borel construction}

  We adopt again the setup of \secref{sec:borel}.
  In this subsection we will prove:

  \begin{prop}\label{prop:chrestomathy}
    Let $\D$ be a lisse-extended topological weave on $\Stk$, and assume $\D$ satisfies topological invariance and continuity.
    Then we have:
    \begin{thmlist}
      \item
      The morphism $\{U_\nu/G\}_\nu \to [S/G] = BG$ is pro-acyclic.
      
      \item
      For every $X\in\Stk^G_S$, the morphism $$\{X \fibprod^G_S U_\nu\}_\nu \to [X/G]$$ is pro-acyclic, where $X \fibprod^G_S U_\nu := [X/G] \fibprod_{BG} (U_\nu/G)$.
    \end{thmlist}
  \end{prop}

  \begin{cor}\label{cor:outcourt}
    Let $\D$ be as in \propref{prop:chrestomathy}.
    For every $X\in\Stk^G_S$ and every $\sF \in \D^\lisse(S)_{<\infty}$, the morphism
    \begin{equation}
      \H^i([X/G], \sF) \to \lim_\nu \H^i(X \fibprod_S^G U_\nu, \sF)
    \end{equation}
    is invertible for all $i\in\Z$.
    Moreover, there exists a sufficiently large index $\nu$ such that
    \begin{equation*}
      \H^i([X/G], \sF) \to \H^i(X \fibprod_S^G U_\nu, \sF)
    \end{equation*}
    is invertible.
  \end{cor}
  \begin{proof}
    Combine \propref{prop:chrestomathy} with \remref{rem:proacycunravel}.
  \end{proof}

  \begin{rem}\label{rem:spearwood}
    \corref{cor:outcourt} says in other words that for any eventually coconnective $\sF \in \D^\lisse([X/G])_{<\infty}$, the pro-system
    \begin{equation*}
      \{ \pi_i \RGamma(X \fibprod^G_S U_\nu, \sF) \}_\nu
    \end{equation*}
    is essentially constant for all $i\in\Z$.
    This may fail without the eventually coconnective hypothesis.
    For example, take $X=S=\Spec(k)$ with $k$ a field, $G=\bG_{m,k}$, $\D=\SH$ as in \secref{sec:sh}, $E = \mrm{KGL} \in \SH(k)$ the algebraic K-theory spectrum, and $\sF=a^*\mrm{KGL}$ where $a : B\bG_{m,k} \to \Spec(k)$.
    Then the pro-system
    \begin{equation*}
      \{ \pi_0 \RGamma(U_\nu/\bG_{m,k}, \mrm{KGL}) \}_\nu
      \simeq \{ \K_0(\P^\nu_k) \}_\nu
      \simeq \{ \bZ[t]/(t^\nu) \}_\nu
    \end{equation*}
    is not isomorphic to the constant pro-system $\{ \bZ[\![t]\!] \}$.
    We thank Marc Levine for pointing out this example.
  \end{rem}

  The proof of \propref{prop:chrestomathy} will use the following lemma:

  \begin{lem}\label{lem:insistingly}
    Suppose that $\D$ satisfies topological invariance and continuity.
    Let $\sX \in \Stk$ and let $\{j_\nu : \sU_\nu \to \sY_\nu\}_\nu$ be a sequential diagram of open immersions in $\Stk_\sX$ with $\sY_\nu$ smooth over $\sX$.
    Write $c_\nu := d_{\sY_\nu/\sX}|_{\sZ_\nu} - d_{\sZ_\nu/\sX}$, where $\sZ_\nu := \sY_\nu \setminus \sU_\nu$ are the reduced complements and $d_{\sY_\nu/\sX}$ and $d_{\sZ_\nu/\sX}$ are the fibre dimensions \sssecref{sssec:fibdim}.
    Assume that for every $c\in\Z$ there exists an index $\nu(c)$ for which $c_\nu \ge c$ for all $\nu\ge\nu(c)$.
    If $\{f_\nu : \sY_\nu \to \sX\}_\nu$ is pro-acyclic, then $\{g_\nu : \sU_\nu \to \sX\}_\nu$ is pro-acyclic.
  \end{lem}
  \begin{proof}
    By definition of the lisse extension and of the cohomological t-structure on $\D^\lisse(\sX)$, we may assume that $X=\sX$, $Y_\nu=\sY_\nu$, and $U_\nu=\sU_\nu$ are schemes.
    For every $\nu$ and every $\sF \in \D(X)_{<\infty}$ we have a commutative triangle
    \[\begin{tikzcd}[column sep=20]
      & \sF \ar{ld}\ar{rd} &
      \\
      f_{\nu,*}f_\nu^*(\sF) \ar{rr}
      & & g_{\nu,*}g_\nu^*(\sF)
    \end{tikzcd}\]
    where the horizontal arrow is induced by the unit $\id \to j_{\nu,*}j_\nu^*$.
    This gives rise to the exact triangle
    \[ \sK(f_\nu) \to \sK(g_\nu) \to \sK_\nu \]
    where $\sK(f_\nu)$, $\sK(g_\nu)$, and $\sK_\nu$ are the fibres of the left-hand diagonal, right-hand diagonal, and horizontal arrows respectively.
    It will thus suffice to show the following claim:
    \begin{enumerate}
      \item[$(\ast)$]
      For every integer $n\in\Z$ and every $\sF \in \D^\lisse(\sX)_{<\infty}$, choose an integer $c \ge (C-n)/2$; then $\sK_{\nu}$ is $n$-coconnective for all $\nu\ge \nu(c)$.
    \end{enumerate}
    By the localization triangle, we have
    \[ \sK_\nu \simeq f_{\nu,*} i_{\nu,*} i_\nu^! f_\nu^*(\sF) \]
    where $i_\nu : Z_\nu \to Y_\nu$ is the inclusion of the reduced complement of $U_\nu$.
    Let $C \in \Z$ such that $\sF$ is $C$-coconnective.
    For every fixed $n\in\Z$, if $c \ge (C-n)/2$ then $d_{Y_\nu/X}|_{Z_\nu} - d_{Z_\nu/X} \ge c$ for all $\nu \ge \nu(c)$.
    It follows by \thmref{thm:tstruct} and \corref{cor:tstructi^!} that $\sK_\nu$ is $(C-2c)$-coconnective.
    As $(C-2c) \le n$, $\sK_\nu$ is in particular $n$-coconnective for every $\nu \ge \nu(c)$.
  \end{proof}

  \begin{proof}[Proof of \propref{prop:chrestomathy}]
    Given $X \in \Stk^G_S$, consider the tower of open immersions
    \[
      \{X \fibprod_S^G U_\nu \to X \fibprod_S^G V_\nu\}_{\nu>0}
    \]
    over $[X/G]$, where the notation is as in \ref{notat:islandlike}.
    For every $\nu$, $X \fibprod_S^G V_\nu$ is the total space of a vector bundle over $[X/G]$, hence is acyclic (by homotopy invariance for $\D$).
    It will therefore suffice to check the condition on fibre dimensions in \lemref{lem:insistingly}.
    Since fibre dimension is stable under base change, this follows from the fact that $c_\nu = d_{V_\nu/S}|_{W_\nu} - d_{W_\nu/S}$ tends to $\infty$, where $W_\nu \sub V_\nu$ is the reduced complement of $U_\nu$ (see \remref{rem:borelfeatures}).
  \end{proof}


\section{Betti and étale (co)homology}
\label{sec:betet}

In this section we specialize the results of the previous section to the following weaves:
\begin{defnlist}
  \item\emph{Betti:}
  Suppose $k=\bC$.
  For every locally of finite type $k$-scheme $X$, let $\D(X) := \mrm{D}(X(\bC), \Lambda)$ denote the derived \inftyCat of sheaves of $\Lambda$-modules on the topological space $X(\bC)$, for some commutative ring $\Lambda$.
  This satisfies topological invariance by definition.

  \item\emph{Étale (torsion coefficients):}
  For every locally of finite type $k$-scheme $X$, let $\D(X) := \mrm{D}_\et(X, \Lambda)$ denote the derived \inftyCat of sheaves of $\Lambda$-modules on the small étale site of $X$, where $\Lambda$ is a commutative ring of positive characteristic $n$, with $n$ invertible in $k$.
  Topological invariance holds by \cite[Exp.~VIII, Thm.~1.1]{SGA4}.

  \item\emph{Étale (adic coefficients):}
  For every locally of finite type $k$-scheme $S$, let $\D(X)$ denote the limit of \inftyCats
  \[
    \lim_{n>0} \mrm{D}_\et(X, \Lambda/\mfr{m}^{n})
  \]
  where $\Lambda$ is a discrete valuation ring whose residue characteristic is invertible in $k$.
  Topological invariance follows from the case of torsion coefficients.
\end{defnlist}
In each case the unit $\un_X \in \D(X)$ is just the constant sheaf with coefficients in $\Lambda$.
These all satisfy étale descent, so the lisse extension $\D^\lisse$ is the unique étale sheaf on $\Stk$ which restricts to $\D$ on $\Sch$ (in particular, it coincides with the extension to stacks considered in \cite{LiuZheng}).
More generally the following discussion goes through for topological weaves which are oriented and satisfy topological invariance and continuity, and for which the unit $\un_X \in \D(X)$ lies in the heart of the cohomological t-structure for every $X \in \Sch$.

Let $S$, $G$, and $\{U_\nu\}_\nu$ be as in \notatref{notat:islandlike}.
Corollaries~\ref{cor:limoid} and \ref{cor:outcourt} specialize to:

\begin{cor}\label{cor:balanophore}
  For every $X \in \Stk_S^G$ and $\sF \in \D^\lisse([X/G])$, there are canonical isomorphisms
  \[ \RGamma([X/G], \sF) \simeq \lim_\nu \RGamma(X \fibprod^G_S U_\nu, \sF). \]
  Moreover, if $\sF \in \D^\lisse([X/G])_{<\infty}$ is eventually coconnective, then there are canonical isomorphisms
  \[ \H^s([X/G], \sF) \simeq \lim_\nu \H^s(X \fibprod^G_S U_\nu, \sF) \]
  for every $s\in\Z$.
\end{cor}

Taking coefficients in the constant sheaf $\Lambda_{BG} = \un_{BG} \in \D^\lisse(BG)$ we deduce:

\begin{cor}\label{cor:lumberjack}
  For every $X \in \Stk_S^G$, there are canonical isomorphisms
  \begin{equation*}
    \H^s_G(X)
    \simeq \lim_\alpha \H^s(X \times^G U_\alpha)
  \end{equation*}
  for all $s\in\bZ$.
\end{cor}
\begin{proof}
  The object $f^*(\Lambda_{BG}) = \Lambda_{[X/G]} \in \D^\lisse([X/G])$ is $0$-coconnective (\thmref{thm:tstruct}\itemref{item:tstructure/starpull}).
\end{proof}

For $X \in \Stk^G$, define the equivariant Borel--Moore homology spectrum (relative to the base $S$)
\[
  \Chom^{\BM,G}(X; \Lambda) = \RGamma([X/G], f^!(\Lambda_{BG}))
\]
where $f : [X/G] \to [S/G] = BG$ is the projection.

\begin{cor}\label{cor:spongicolous}
  We have
  \[ \Chom^{\BM,G}(X; \Lambda) \simeq \lim_\nu \CBM(X \fibprod^G_S U_\nu; \Lambda)(-d_\nu+g)[-2d_\nu+2g], \]
  where $d_\nu$ is the relative dimension of $U_\nu \to S$ and $g$ is the relative dimension of $G \to S$.
\end{cor}
\begin{proof}
  By \corref{cor:balanophore} we have
  \[ \RGamma([X/G], f^!(\Lambda_{BG})) \simeq \lim_\nu \RGamma(X \fibprod^G_S U_\nu, q_\nu^* f^!(\Lambda_{BG})) \]
  where $q_\nu : X \fibprod^G_S U_\nu \to [X/G]$ is the base change of $U_\nu/G \to BG$.
  The latter are smooth of relative dimension $d_\nu$.
  By the Poincaré duality isomorphisms $q_\nu^* \simeq q_\nu^!(-d_\nu)[-2d_\nu]$ and $\Lambda_{BG} \simeq a_{BG}^!(\Lambda_S)(g)[2g]$, where $a_{BG} : BG \to S$, the right-hand side is identified with the limit of the (shifted and Tate twisted) Borel--Moore chains on $X \fibprod^G_S U_\nu$ as claimed.
\end{proof}

We define the equivariant Borel--Moore homology groups by
\[
  \H^{\BM,G}_s(X; \Lambda) = \pi_s \RGamma([X/G], f^!(\Lambda_{BG})) \simeq \H^{-s}([X/G], f^!(\Lambda_{BG}))
\]
for $s\in\Z$.

\begin{cor}\label{cor:tuberculation}
  If $X$ is of finite dimension (in the sense of \cite[0AFL]{Stacks} or \cite[Eq.~(11.14)]{LaumonMoretBailly}), then
  \[ \H^{\BM,G}_s(X; \Lambda) \simeq \lim_\nu \H^\BM_{s+2d_\nu-2g}(X \fibprod^G_S U_\nu; \Lambda)(-d_\nu+g). \]
  for every $s\in\Z$.
\end{cor}
\begin{proof}
  If $X$ is of dimension $\le d$, then $f : [X/G] \to BG$ is of relative dimension $\le d$, so $f^!(\Lambda_{BG})$ is $2d$-coconnective by \thmref{thm:tstruct}.
  We conclude by the second part of \corref{cor:balanophore}.
\end{proof}


\section{Generalized cohomology theories}
\label{sec:sh}

We now consider the stable motivic homotopy weave $\D=\SH$.
In fact, the following results hold for arbitrary topological weaves with the same proofs, but for concreteness we just record the universal case.

Given a scheme $X$, let $\SH(X)$ denote the stable \inftyCat of motivic spectra over $X$ (see e.g. \cite[App.~C]{HoyoisLefschetz}), and consider the lisse extension $\SH^\lisse(-)$ with respect to $*$-inverse image.

Let $S$, $G$, and $\{U_\nu\}_\nu$ be as in \notatref{notat:islandlike}.
By \corref{cor:limoid} we deduce the following, a vast generalization of \cite[Thm.~12.9]{sixstack}.

\begin{cor}\label{cor:propylidene}
  For every $X \in \Stk_S^G$ and $\sF \in \SH^\lisse([X/G])$, there are canonical isomorphisms
  \[ \RGamma([X/G], \sF) \simeq \lim_\nu \RGamma(X \fibprod^G_S U_\nu, \sF). \]
\end{cor}

The six functor formalism on $\SH(-)$ persists to the lisse extension $\SH^\lisse(-)$ by \cite{ChowdhuryDAngelo,Weavelisse}.
In particular, for any locally of finite type morphism $f$ in $\Stk$ one has the adjoint pair of functors $(f_!, f^!)$.

Given a motivic spectrum $E \in \SH(S)$, let $E_{BG} = E|_{BG}$ denote its $*$-inverse image in $\SH^\lisse(BG)$.
For $X \in \Stk_S^G$, define the equivariant Borel--Moore homology spectrum (relative to the base $S$)
\[
  \Chom^{\BM,G}(X; E) = \RGamma([X/G], f^!(E_{BG}))
\]
where $f : [X/G] \to [S/G] = BG$ is the projection.
We denote by $\vb{n} \simeq (n)[2n]$ the Thom twist by the trivial bundle of rank $n$.

\begin{cor}\label{cor:Orchestia}
  For every $X \in \Stk_S^G$ there is a canonical isomorphism
  \[ \Chom^{\BM,G}(X; E) \simeq \lim_\nu \CBM(X \fibprod^G_S U_\nu; E)\vb{-\Omega_{U_\nu/S}+\Omega_{G/S}}. \]
  If $E$ is oriented, then moreover
  \[ \Chom^{\BM,G}(X; E) \simeq \lim_\nu \CBM(X \fibprod^G_S U_\nu; E)\vb{-d_\nu+g}, \]
  where $d_\nu$ (resp. $g$) is the relative dimension of $U_\nu \to S$ (resp. $G \to S$).
\end{cor}
\begin{proof}
  Follows from \corref{cor:propylidene} as in the proofs of Corollaries~\ref{cor:spongicolous} and \ref{cor:tuberculation}, using the (unoriented) Poincaré duality isomorphisms
  \begin{equation*}
    q_\nu^* \simeq q_\nu^! \vb{-\Omega_{U_\nu/S}},
    \quad E_{BG} \simeq a_{BG}^!(E)\vb{\Omega_{G/S}},
  \end{equation*}
  where $\Omega_{U_\nu/S}$ is the ($G$-equivariant) relative cotangent sheaf of $U_\nu \to S$, and similarly for $\Omega_{G/S}$, and $a_{BG} : BG \to S$ is the projection.
\end{proof}

Suppose $k$ is a field, $S=\Spec(k)$.
If $k$ has characteristic exponent $e$, then $\SH[1/e]$ (and more generally $\D[1/e]$ for any topological weave $\D$) satisfies topological invariance by \cite{topinv} as well as continuity.
The cohomological t-structure of \ssecref{ssec:acyclic/t} is the homotopy t-structure in this case.
By \corref{cor:outcourt} we obtain:

\begin{cor}
  If $E \in \SH(k)_{<\infty}$ is eventually coconnective, then for every $X \in \Stk_S^G$ there are canonical isomorphisms
  \[ \H^s([X/G], E)\inv{e} \simeq \lim_\nu \H^s(X \fibprod^G_S U_\nu, E)\inv{e} \]
  for every $s\in\Z$.
\end{cor}

Consider the equivariant Borel--Moore homology groups
\[
  \H^{\BM,G}_s(X; E) = \pi_s \RGamma([X/G], f^!(E_{BG})) \simeq \H^{-s}([X/G], f^!(E_{BG}))
\]
for $s\in\Z$, where $f : [X/G] \to BG$.
The argument of \corref{cor:tuberculation} shows:

\begin{cor}\label{cor:omniparent}
  If $X$ is of finite dimension and $E \in \SH(S)_{<\infty}$ is eventually coconnective, then 
  \[ \H^{\BM,G}_s(X; E)\inv{e} \simeq \lim_\nu \H^\BM_s(X \fibprod^G_S U_\nu; E)\inv{e}\vb{-\Omega_{U_\nu/S}+\Omega_{G/S}} \]
  for every $s\in\Z$.
  If $E$ is oriented, then moreover
  \[ \H^{\BM,G}_s(X; E)\inv{e} \simeq \lim_\nu \H^\BM_{s+2d_\nu-2g}(X \fibprod^G_S U_\nu; E)\inv{e}(-d_\nu+g) \]
  for every $s\in\Z$.
\end{cor}

Let $\Lambda$ be a commutative ring in which $e$ is invertible, and let $E=\Lambda^{\mot} \in \SH(k)$ be the $\Lambda$-linear motivic cohomology spectrum.
Since the latter is eventually coconnective, we may apply \corref{cor:omniparent}.
Combining this with the comparison between motivic Borel--Moore homology of schemes and higher Chow groups (see \cite[Prop.~19.18]{MazzaVoevodskyWeibel} and \cite[Cor.~8.12]{CisinskiDegliseIntegral}), we thus deduce:

\begin{cor}\label{cor:lactoscope}
  For every algebraic space $X$ of finite type over $k$ with $G$-action, there are canonical isomorphisms
  \[
    \H^{\BM,G}_{s+2n}(X; \Lambda^\mot)(-n)
    \simeq \on{A}^G_{n}(X, s) \otimes \Lambda
  \]
  for all $n,s\in\bZ$, where on the right-hand side are the $G$-equivariant higher Chow groups of $X$ \cite[\S 2.7]{EdidinGraham}.
\end{cor}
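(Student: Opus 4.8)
The plan is to feed the oriented Borel-construction formula \corref{cor:omniparent} into the known comparison of motivic Borel--Moore homology with Bloch's higher Chow groups, and then to recognize the resulting pro-system as precisely the defining approximation of Edidin--Graham.

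First I would apply \corref{cor:omniparent} to $E=\Lambda^\mot$. This is legitimate: $\Lambda^\mot\in\SH(k)$ is oriented, and for a quasi-separated algebraic space $X$ with $G$-action the projection $f:[X/G]\to BG$ is representable, so that $\Chom^{\BM,G}(X;\Lambda^\mot)$ is defined (cf.\ \cite{ChowdhuryArXiv}). Write $S=\Spec(k)$, $g=\dim G$, and $d_\nu$ for the relative dimension of $U_\nu\to S$, so that $d_\nu-g=\dim(U_\nu/G)$. Replacing $s$ by $s+2n$ in \corref{cor:omniparent} and Tate-twisting by $(-n)$ then yields
\[
  \H^{\BM,G}_{s+2n}(X;\Lambda^\mot)(-n)
  \;\simeq\;
  \lim_\nu\,\H^\BM_{s+2(n+d_\nu-g)}\big(X\fibprod^G_S U_\nu;\Lambda^\mot\big)\big(-(n+d_\nu-g)\big).
\]
Since $G$ acts freely on each $U_\nu$, the term $X\fibprod^G_S U_\nu=[(X\times U_\nu)/G]$ is the quasi-separated algebraic space $Y_\nu:=(X\times U_\nu)/G$, of finite type over $k$ and of dimension $\dim X+d_\nu-g$.

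Next I would apply the comparison $\H^\BM_{t+2m}(Y;\Lambda^\mot)(-m)\simeq\CH_m(Y,t)\otimes\Lambda$ of \cite[Prop.~19.18]{MazzaVoevodskyWeibel} and \cite[Cor.~8.12]{CisinskiDegliseIntegral} with $Y=Y_\nu$, $m=n+d_\nu-g$, $t=s$. As those references are stated for schemes while $Y_\nu$ is merely an algebraic space, I would first promote the comparison to quasi-separated algebraic spaces of finite type: both sides carry localization sequences (Bloch's localization theorem on the cycle side, the localization triangle for the $(f_!,f^!)$-pair on the motivic side) and agree on schemes, so a Noetherian induction over a stratification of $Y_\nu$ by schemes settles it. The displayed formula then becomes
\[
  \H^{\BM,G}_{s+2n}(X;\Lambda^\mot)(-n)
  \;\simeq\;
  \lim_\nu\,\CH_{n+d_\nu-g}\big((X\times U_\nu)/G,\,s\big)\otimes\Lambda.
\]

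Finally I would match the right-hand side with Edidin--Graham. The complements $V_\nu\setminus U_\nu$ have codimension growing without bound, so for each fixed $n$ the pair $(V_\nu,U_\nu)$ is an admissible approximation for $\on{A}^G_n(X,-)$ in the sense of \cite[\S 2.7]{EdidinGraham} once $\nu$ is large; since $d_\nu-g=\dim(U_\nu/G)$, the $\nu$-th term is then $\on{A}^G_n(X,s)\otimes\Lambda$ by definition. Moreover the transition maps of the pro-system are the Gysin pullbacks along the regular closed immersions $(X\times U_\nu)/G\hookrightarrow(X\times U_{\nu+1})/G$; factoring each of these through the zero section of the vector bundle $(X\times(U_\nu\times\V_S(\sE)))/G\to(X\times U_\nu)/G$ followed by an open immersion with high-codimension complement identifies them with the approximation-comparison maps of loc.\ cit., which are eventually invertible (essential constancy of the pro-system is in any case already provided by \thmref{thm:hemiataxy}). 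Passing to the limit gives the stated isomorphism. The step I expect to need the most care is the index bookkeeping — keeping the Thom/Tate twists of \corref{cor:omniparent} in step with the dimension shift $d_\nu-g$ built into the Edidin--Graham definition — together with, to a lesser extent, the extension of the higher-Chow comparison from schemes to algebraic spaces.
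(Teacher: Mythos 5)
Your proposal is correct and follows exactly the route the paper sketches (the paper gives only a one-sentence proof invoking \corref{cor:omniparent} and the scheme-level comparison with higher Chow groups). Your elaborations — the reindexing $s\mapsto s+2n$ with twist $(-n)$, the observation that $X\fibprod^G_S U_\nu$ is a quasi-separated algebraic space so that the cited comparisons must be extended from schemes via localization and Noetherian induction, and the identification of the stabilized pro-system with the Edidin–Graham single-approximation definition using essential constancy from \thmref{thm:hemiataxy} — are precisely the details the paper leaves implicit.
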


Similarly, one has
\[
  \H^{\BM}_{s+2n}(\sX; \Lambda^\mot)(-n)
  \simeq \on{A}_n(\sX, s) \otimes \Lambda,
\]
where $\sX = [X/G]$ and the right-hand side is defined in \cite[\S 5.3]{EdidinGraham} or \cite{Kresch}.
We expect this comparison to generalize to all Artin stacks $\sX$ of finite type over $k$ with affine stabilizers, cf. \cite{BaePark}.

\begin{rem}
  The right-hand side of \corref{cor:omniparent} indicates a definition of $G$-equivariant (higher) Chow--Witt groups in terms of the Borel construction.
  By construction, the resulting theory identifies with the generalized Borel--Moore homology theory associated with the Milnor--Witt motivic cohomology spectrum (in a manner parallel to \corref{cor:lactoscope}).
\end{rem}


\section{Algebraic K-theory}
\label{sec:k}

Consider the Nisnevich sheaf of spectra $\K : \Asp^{\op} \to \Spt$ which sends an algebraic space $X$ of finite type over $k$ to its Bass--Thomason--Trobaugh K-theory spectrum (see e.g. \cite[Def.~2.6, Rem.~2.15]{KhanKstack}).
Write $\KH : \Asp^{\op} \to \Spt$ for its $\A^1$-invariant version, defined by
\[ \KH(X) = \colim_{[n]\in\bDelta^\op} \K(X \times \A^n) \]
for every $X \in \Asp$, see e.g. \cite[\S 4.2]{KhanKstack}.
The canonical map $\K(X) \to \KH(X)$ is invertible when $X$ is regular.

We study the lisse extensions $\K^\lisse$ and $\KH^\lisse$.
Note that with rational coefficients, the presheaf $\K(-)_\Q : \Asp^\op \to \Spt$ sending $X \in \Asp$ to its rationalized K-theory spectrum $\K(X) \otimes \Q$, satisfies étale descent (see e.g. \cite[Thm.~5.1]{KhanKstack}).
Thus its lisse extension $\K(-)_\Q^\lisse$ is the unique extension of $\K_\Q$ to an étale sheaf on Artin stacks; in particular, it coincides with the construction $\K^\et(-)_\Q$ in \cite[\S 5.2]{KhanKstack}.

Let $S$, $G$, and $\{U_\nu\}_\nu$ be as in \notatref{notat:islandlike}.
Applying \corref{cor:hemiataxy} to the presheaf $\KH|_{\Sch^\lci}$ yields:

\begin{cor}
  For every algebraic space $X$ of finite type over $S$ with $G$-action, there is a canonical isomorphism
  \[ \KH^\lisse([X/G]) \simeq \lim_\nu \KH(X \fibprod^G_S U_\nu). \]
\end{cor}

\begin{rem}
  More generally, we get the same computation for any localizing invariant of stable \inftyCats which satisfies $\A^1$-invariance.
  For example, this also applies to topological K-theory (over the complex numbers) and periodic cyclic homology in characteristic zero.
\end{rem}

\begin{cor}
  If $X$ is regular, then moreover
  \[ \K^\lisse([X/G]) \simeq \lim_\nu \K(X \fibprod^G_S U_\nu). \]
\end{cor}

Suppose the base ring $k$ is noetherian.
Since lci morphisms are of finite Tor-amplitude, G-theory (= algebraic K-theory of coherent sheaves) defines a presheaf of spectra $\G : \Asp^{\lci,\op} \to \Spt$ on the category of algebraic spaces of finite type over $k$ and lci morphisms (see e.g. \cite[\S 3]{KhanKstack}).
Thus \corref{cor:hemiataxy} yields:

\begin{cor}\label{cor:kerflummox}
  For every algebraic space $X$ of finite type over $S$ with $G$-action, there is a canonical isomorphism
  \[ \G^\lisse([X/G]) \simeq \lim_\nu \G(X \fibprod^G_S U_\nu). \]
\end{cor}

\begin{defn}
  With notation as above, we define \emph{Borel-type $G$-equivariant K-theory}, \emph{KH-theory}, and \emph{G-theory} by
  \begin{align*}
    \K^{G,\lisse}(X) &:= \K^\lisse([X/G]),\\
    \KH^{G,\lisse}(X) &:= \KH^\lisse([X/G]),\\
    \G^{G,\lisse}(X) &:= \G^\lisse([X/G])
  \end{align*}
  for all algebraic spaces $X$ of finite type over $S$ with $G$-action.
\end{defn}

\begin{prop}\label{prop:Gconn}
  For every algebraic space $X$ of finite type over $S$ with $G$-action, the spectrum $\G^{G,\lisse}(X)$ is connective.
  If $X$ is regular, then the same holds for $\K^{G,\lisse}(X)$.
\end{prop}
\begin{proof}
  Recall that the claim is true for $G$ trivial (i.e., $\G(X)$ is connective), and note that the second claim follows from the first since $\K(T) \simeq \G(T)$ for every $(T,t) \in \Lis_X$ when $X$ (hence $T$) is regular.
  
  For every $s > 0$, we have using \corref{cor:kerflummox} the canonical surjections
  \begin{equation}\label{eq:iprkebtm}
    \pi_{-s} \G^{G,\lisse}(X)
    \twoheadrightarrow \lim_\nu \pi_{-s} \G(X\fibprod^G U_\nu) \simeq 0
  \end{equation}
  with kernel $\lim^1_\nu \pi_{-s+1} \G(X \fibprod^G U_\nu)$.
  We claim that the pro-system
  \begin{equation}\label{eq:gkxhjuaj}
    \{ \pi_{-s+1} \G(X \fibprod^G U_\nu) \}_\nu
  \end{equation}
  satisfies the Mittag--Leffler condition for every $s>0$.
  This will imply that the $\lim^1$ terms vanish, hence $\pi_{-s}\G^{G,\lisse}(X)$ all vanish.

  By \remref{rem:borelfeatures} the transition map for $\mu>\nu$ is the composite of the restriction map along an open,
  \begin{equation*}
    \pi_{-s+1} \G(X \fibprod^G U_\mu)
    \to \pi_{-s+1} \G(X \fibprod^G (U_\nu \fibprod_S V_{\mu-\nu}))
  \end{equation*}
  and pull-back along the zero section of a vector bundle,
  \begin{equation*}
    \pi_{-s+1} \G(X \fibprod^G (U_\nu \fibprod_S V_{\mu-\nu}))
    \to \pi_{s+1} \G(X \fibprod^G U_\nu).
  \end{equation*}
  The former is surjective if $s=1$ and bijective if $s>1$ (by the long-exact localization sequence and the connectivity of G-theory of algebraic spaces), and the latter is bijective for all $s$ (by homotopy invariance).
  Thus \eqref{eq:gkxhjuaj} has surjective transition maps and in particular satisfies the Mittag--Leffler condition.
\end{proof}

We now prove \thmref{thmX:grr}.
From now on, we take $S$ to be the spectrum of the base commutative ring $k$ (so that $G$ is a group scheme over $k$).
We begin with the following spectrum-level formulation of equivariant Grothendieck--Riemann--Roch:

\begin{thm}\label{thm:grrspt}
  Suppose $k$ is regular noetherian.
  Let $X$ be an algebraic space of finite type over $k$ with $G$-action.
  Then there is a canonical isomorphism of spectra
  \begin{equation*}
    \G^{G,\lisse}(X)_\Q \simeq \prod_{i\in\Z} \Chom^{\BM,G}(X; \Q^\mot)\vb{-i}.
  \end{equation*}
  Moreover, this isomorphism commutes with equivariant proper push-forwards and equivariant quasi-smooth Gysin pull-backs.
\end{thm}

We will come to the proof at the end of this section, after recording several consequences of \thmref{thm:grrspt}.

First, taking homotopy groups and combining with \corref{cor:lactoscope}, we deduce:

\begin{cor}\label{cor:antiforeignism}
  Suppose $k$ is a field.
  Then for every algebraic space $X$ of finite type over $k$ with $G$-action, there are canonical isomorphisms
  \[
    \G^{G,\lisse}_s(X)_\Q
    \simeq \prod_{i\in\Z} \on{A}^G_{i}(X, s)_\Q
  \]
  for all $s\in\Z$.
  Moreover, these isomorphisms commute with equivariant proper push-forwards and equivariant quasi-smooth Gysin pull-backs.
\end{cor}

Rationally, we can also show that the $\lim{}\!^1$ obstruction vanishes:

\begin{cor}\label{cor:gimmel}
  Suppose $k$ is regular noetherian.
  Then for every algebraic space $X$ of finite type over $k$ with $G$-action, the canonical morphisms
  \begin{equation*}
    \G_s^{G,\lisse}(X)_\Q
    \to \lim_\nu \G_s(X \fibprod^G U_\nu)_\Q
  \end{equation*}
  are bijective for all $s\in\Z$.
\end{cor}
\begin{proof}
  In view of \corref{cor:kerflummox} there is a canonical surjection
  \begin{equation*}
    \G_s^{G,\lisse}(X)_\Q \twoheadrightarrow \lim_\nu \G_s(X \fibprod^G U_\nu)_\Q.
  \end{equation*}
  Under the isomorphisms of \thmref{thm:grrspt} this is identified with the product over $i\in\Z$ of the maps
  \begin{equation*}
    \pi_s \Chom^{\BM,G}(X;\Q^\mot)\vb{-i}
    \twoheadrightarrow \lim_\nu \pi_s \CBM(X\fibprod^G U_\nu;\Q^\mot)\vb{-i-d_\nu+g},
  \end{equation*}
  since products commute with $\pi_s$ and $\lim$.
  Each of these maps are bijective by \corref{cor:omniparent}, so the claim follows.
\end{proof}

\begin{cor}
  Suppose $k$ is a field and $G$ is a smooth affine algebraic group over $k$.
  Then for every quasi-projective scheme $X$ over $k$ with linearized $G$-action, there is a canonical isomorphism
  \[
    \G_0^G(X)^\wedge_{I_G}
    \simeq \prod_{i\in\Z} \on{A}^G_{i}(X)_\Q
  \]
  where the left-hand side is the completion of the $G$-equivariant G-theory of $X$ at the augmentation ideal $I_G \sub \K_0(BG)$.
\end{cor}
\begin{proof}
  Krishna shows in \cite[Thm.~9.10]{KrishnaCompletion} that under the assumptions, the completion $\G_0^G(X)^\wedge_{I_G}$ agrees with the right-hand side of \corref{cor:gimmel} for $s=0$.
\end{proof}

\begin{rem}
  In \cite{CarlssonJoshua} G.~Carlsson and R.~Joshua showed (under some technical hypotheses) that the right-hand side of \corref{cor:kerflummox}, and hence $\G^{G,\lisse}(X)$, agrees with the ``Adams completion'' of $\G^G(X) = \G([X/G])$ with respect to the augmentation map $\K(BG) = \K^G(S) \to \K(S)$.

  At the level of homotopy groups, A.~Krishna studied in \cite{KrishnaCompletion} the question of bijectivity of the map (the ``Atiyah--Segal completion problem'')
  \begin{equation}\label{eq:unforewarnedness}
    \G^G_s(X)^\wedge_{I_G} \to \lim_\nu \G_s(X \fibprod^G U_\nu)
  \end{equation}
  for $X$ smooth quasi-projective.
  He showed that this holds when $X$ is moreover projective and $G$ is connected split reductive, but that it may fail for $X$ non-projective (even with $G=\bG_m$).
  In general, we can say that in the smooth but non-projective case the Atiyah--Segal map \eqref{eq:unforewarnedness} is invertible with rational coefficients.
  Indeed, rationally both sides are isomorphic to $\prod_{i\in\Z} \on{A}^G_i(X,s)_\Q$: the right-hand side by Corollaries~\ref{cor:antiforeignism} and \corref{cor:gimmel}, and the left-hand side by Krishna's version of equivariant GRR in \cite{KrishnaRR}.
\end{rem}

We begin the proof of \thmref{thm:grrspt}, which involves some stable motivic homotopy theory.
Let $\KGL \in \SH(k)$ denote the algebraic K-theory spectrum over $k$ (see \cite[\S 13.1]{CisinskiDegliseBook}).
Set
\begin{equation*}
  \KGL^\lisse_\sX := a^*(\KGL),
  \quad
  \Q^{\mot,\lisse}_\sX := a^*(\Q^\mot)
  \in \SH^\lisse(\sX)
\end{equation*}
for every $\sX \in \Stk$ with structural morphism $a : \sX \to \Spec(k)$, where $\Q^\mot \in \SH(k)$ is the rational motivic cohomology spectrum as in \corref{cor:lactoscope}.
We begin by generalizing the Adams decomposition of $\KGL_\Q$ (see \cite{Riou}, \cite[\S 14.1]{CisinskiDegliseBook}) to stacks.

\begin{prop}\label{prop:pyruvate}\leavevmode
  \begin{thmlist}
    \item
    For every $\sY \in \Stk$, there is a canonical isomorphism
    \begin{equation}
      \KGL^\lisse_{\sY,\Q} \simeq \bigoplus_{i\in\Z} \Q^{\mot,\lisse}_\sY\vb{i}.
    \end{equation}
    If $\sY$ is smooth then there is also a canonical isomorphism
    \begin{equation}
      \KGL^\lisse_{\sY,\Q} \simeq \prod_{i\in\Z} \Q^{\mot,\lisse}_\sY\vb{i}.
    \end{equation}

    \item
    For every morphism $f : \sX \to \sY$ in $\Stk$ with $\sY$ smooth, there is a canonical isomorphism
    \begin{equation}
      f_*f^!(\KGL^\lisse_{\sY,\Q}) \to \prod_{i\in\Z} f_*f^!(\Q^{\mot,\lisse}_\sY)\vb{i}
    \end{equation}
    in $\SH^\lisse(\sY)$.
  \end{thmlist}
\end{prop}
\begin{proof}
  Consider the canonical morphisms
  \begin{equation*}
    \bigoplus_{i\in\Z} \Q^{\mot}\vb{i} \to \KGL_\Q \to \prod_{i\in\Z} \Q^{\mot}\vb{i}
  \end{equation*}
  in $\SH(k)$.
  The first is invertible by \cite[Thm.~5.3.10]{Riou}.
  The composite is also invertible: for every smooth $k$-scheme $X$ and $s \ge 0$, it induces an isomorphism on $\H^{-s}(X, -)$ since
  \begin{equation*}
    \H^{-s}(X, \Q^\mot\vb{i}) \simeq \H^{2i-s}(X; \Q(i)) \simeq \Gr_\gamma^i \K_{s}(X)_\Q
  \end{equation*}
  vanishes for $i<0$ and $i \gg 0$.
  The second isomorphism uses the fact that $X$ is smooth over $k$ and hence regular (see \cite[Cor.~14.2.14]{CisinskiDegliseBook}).
  
  The isomorphisms in the first claim follow by $*$-inverse image along $a : \sY \to \Spec(k)$, which commutes with colimits (resp. limits when $\sY$ is smooth).
  The second claim then follows since $f^!$ and $f_*$ commute with limits.
\end{proof}

\begin{proof}[Proof of \thmref{thm:grrspt}]
  For every $X$ as in the statement we have by \propref{prop:pyruvate} a canonical isomorphism $f_*f^!(\KGL^\lisse_{BG,\Q}) \simeq \prod_{i\in\Z} f_*f^!(\bQ^{\mot,\lisse}_{BG})\vb{i}$ where $f : [X/G] \to BG$ is the projection.
  Formation of derived global sections commutes with limits, so as $X$ varies this gives the canonical isomorphism
  \begin{equation*}
    \Chom^{\BM,G}(-; \KGL_\Q)
    \simeq \prod_{i\in\Z} \Chom^{\BM,G}(-; \Q^{\mot})\vb{i}.
  \end{equation*}
  The claim follows by combining this with the canonical isomorphism
  \begin{equation*}
    \G^{G,\lisse}(-)
    \simeq \Chom^{\BM,G}(-; \KGL)
  \end{equation*}
  obtained by lisse-extending the isomorphism $\G(-) \simeq \CBM(-; \KGL)$ of sheaves of spectra on $\Sch^\lci$ (see \cite{JinG}).
  The first displayed isomorphism commutes with equivariant proper push-forwards and equivariant quasi-smooth Gysin pull-backs by construction.
  The second displayed isomorphism commutes with equivariant proper push-forwards by the arguments of \cite[\S 3.1]{JinG} and with equivariant quasi-smooth Gysin pull-backs by \cite[\S 6.2]{KhanKstack}.
\end{proof}


\section{Algebraic bordism}
\label{sec:bordism}

Let $S=\Spec(k)$, $G$ an affine algebraic group over $k$, and $\{U_\nu\}_\nu$ as in \notatref{notat:islandlike}.
\corref{cor:Orchestia} applied to the algebraic cobordism spectrum $\MGL \in \SH(k)$ (and its Tate twists) shows that the associated equivariant Borel--Moore theory can be computed, at the level of spectra, by the Borel construction:

\begin{cor}\label{cor:semidiatessaron}
  For every $X\in \Stk_S^G$ and $r,s\in\Z$ there are canonical isomorphisms of spectra
  \[
    \Chom^{\BM,G}(X; \MGL) \simeq \lim_\nu \CBM(X\fibprod^G U_\nu; \MGL)\vb{-d_\nu+g}
  \]
  where $d_\nu=\dim(U_\nu)$ and $g=\dim(G)$.
\end{cor}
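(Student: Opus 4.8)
The plan is to read this off directly from \corref{cor:omniparent}, the oriented case of the general comparison. First I would recall that Voevodsky's algebraic cobordism spectrum $\MGL$ is oriented --- it is the universal oriented motivic ring spectrum and carries its tautological Thom class --- so the hypothesis of \corref{cor:omniparent} is satisfied. Since here $S = \Spec(k)$, the relative dimensions appearing there collapse to $d_\nu = \dim(U_\nu/k) = \dim(U_\nu)$ and $g = \dim(G)$, and $X$ is a $k$-scheme, so that $f \colon [X/G] \to BG$ is representable and the $!$-operations used to set up \corref{cor:Orchestia} are available. Applying \corref{cor:omniparent} to $E = \MGL$ then gives the asserted isomorphism in the case $r = 0$, together with its refinement to an isomorphism of pro-spectra inherited from \thmref{thm:hemiataxy} --- which is exactly what guarantees that the right-hand $\lim_\nu$ has vanishing $\lim^1$ and computes $\H^{\BM,G}_s$ correctly.

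For arbitrary $r \in \Z$ I would then apply the Tate-twist autoequivalence $(-r) \colon \SH(k) \to \SH(k)$, $\sF \mapsto \sF(-r)$. Because $\un(-r)$ is $\otimes$-invertible and pulled back from $\Spec(k)$, this functor commutes with $*$-inverse image, with the exceptional operations $(f_!, f^!)$, with $\RGamma$ on the stacks in question, and with filtered limits; hence it transports the $r = 0$ comparison to the general one, after the evident identifications $f^!(\MGL_{BG})(-r) \simeq f^!\bigl((\MGL(-r))_{BG}\bigr)$ and $\CBM(X\fibprod^G U_\nu; \MGL)(-r) \simeq \CBM(X\fibprod^G U_\nu; \MGL(-r))$. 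Equivalently, one notes that the comparison morphism produced by \thmref{thm:hemiataxy} and \corref{cor:omniparent} is $\SH(k)$-linear, in particular compatible with $\otimes\,\un(-r)$. Passing to $\pi_s$ yields the statement on homotopy groups, just as in the proof of \thmref{thm:hemiataxy}.

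I do not expect any genuine obstacle here: all the substance is already contained in \propref{prop:moneyed}, \thmref{thm:hemiataxy}, \corref{cor:Orchestia}, and \corref{cor:omniparent}. The only points that would need (routine) verification are that $\MGL$ is oriented, which is classical, and that the Tate twist commutes with global sections on $[X/G]$ and on $X\fibprod^G U_\nu$ and with the limit over $\nu$ --- both immediate since it is an invertible operation pulled back from the base.
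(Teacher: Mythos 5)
Your proposal is correct and matches the paper's (implicit) argument: the text preceding the corollary simply says that \corref{cor:omniparent} applied to $\MGL$ and its Tate twists yields the statement, which is exactly what you do. Whether one applies \corref{cor:omniparent} directly to $E=\MGL(-r)$ (which inherits a Thom isomorphism from the orientation of $\MGL$ since the Tate twist commutes with Thom twists) or, as you phrase it, applies it to $\MGL$ and then conjugates by the Tate-twist autoequivalence, the result is the same, and the commutation facts you invoke are all routine.
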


Recall that if $X$ is a quasi-projective $k$-scheme with linearized $G$-action, then each $X\fibprod^G U_\nu$ is a quasi-projective $k$-scheme (Remark~\ref{rem:unsainted}).
Thus in this case \corref{cor:semidiatessaron} computes the $G$-equivariant bordism of $X$ in terms of non-equivariant bordism of schemes.

When $k$ is a field of characteristic zero, there is an identification
\begin{equation*}
  \H^\BM_{2n}(X; \MGL)(-n)
  \simeq \Omega_n(X),
\end{equation*}
for all quasi-projective $k$-schemes $X$, of the $(2*,*)$-graded part of Borel--Moore homology with coefficients in $\MGL$, with the (lower) algebraic bordism theory defined by Levine and Morel (see \cite{LevineMorel,LevineMGL}).

Building on the theory $\Omega_*(-)$, Heller and Malag\'on-L\'opez defined in \cite{HellerMalagonLopez} a (lower) $G$-equivariant algebraic bordism theory by the formula
\begin{equation}\label{eq:HML}
  \Omega^{G,\mrm{HML}}_n(X) := \lim_\nu \Omega_{n+d_\nu-g}(X\fibprod^G U_\nu)
\end{equation}
for $X$ quasi-projective with linearized $G$-action.
By \corref{cor:semidiatessaron}, the canonical surjection from the homotopy groups of a homotopy limit to the limit of its homotopy groups reads in this case
\begin{equation}\label{eq:waff}
  \begin{multlined}
    \pi_0 \Chom^{\BM,G}(X; \MGL)\vb{-n}\\
    \twoheadrightarrow \lim_\nu \pi_0 \CBM(X\fibprod^G U_\nu; \MGL)\vb{-n-d_\nu+g}
    \simeq \Omega^{G,\mrm{HML}}_{n}(X)
  \end{multlined}
\end{equation}
for every $n\in\Z$.
Since $\MGL \in \SH(k)$ is not eventually coconnective, we cannot apply \corref{cor:omniparent} to deduce that the above map is invertible.
Nevertheless, we can show this holds after rationalization:

\begin{thm}\label{thm:mglq}
  Let $k$ be a field of characteristic zero, $G$ an affine algebraic group, and $X$ a quasi-projective $k$-scheme with linearized $G$-action.
  Then the canonical map
  \begin{equation*}
    \pi_0 \Chom^{\BM,G}(X; \MGL_\Q)\vb{-n}\\
    \twoheadrightarrow \Omega^{G,\mrm{HML}}_{n}(X)_\Q
  \end{equation*}
  is invertible for every $n\in\Z$.
\end{thm}

We begin with the analogue of \propref{prop:pyruvate} for $\MGL$.
For $\sX \in \Stk$ with structural morphism $a : \sX \to \Spec(k)$, set $\MGL^\lisse_\sX := a^*(\MGL) \in \SH^{\lisse}(\sX)$ and $\Q^{\mot,\lisse}_\sX := a^*(\Q^\mot)$.

\begin{prop}\label{prop:pyruvate MGL}\leavevmode
  \begin{thmlist}
    \item
    For every $\sY \in \Stk$, there is a canonical isomorphism
    \begin{equation}
      \MGL^\lisse_{\sY,\Q} \simeq \bigoplus_{i\ge 1} \Q^{\mot,\lisse}_\sY\vb{i}.
    \end{equation}
    If $\sY$ is smooth then there is also a canonical isomorphism
    \begin{equation}
      \MGL^\lisse_{\sY,\Q} \simeq \prod_{i\ge 1} \Q^{\mot,\lisse}_\sY\vb{i}.
    \end{equation}

    \item
    For every morphism $f : \sX \to \sY$ in $\Stk$ with $\sY$ smooth, there is a canonical isomorphism
    \begin{equation}
      f_*f^!(\MGL^\lisse_{\sY,\Q}) \to \prod_{i\ge 1} f_*f^!(\Q^{\mot,\lisse}_\sY)\vb{i}
    \end{equation}
    in $\SH^\lisse(\sY)$.
  \end{thmlist}
\end{prop}
\begin{proof}
  Recall that there is a canonical decomposition $\MGL_\Q \simeq \bigoplus_{i\ge 1} \Q\vb{i}$ in $\SH(k)$ (see \cite[Cor.~10.6]{NaumannSpitzweckOstvaer}).
  Hence one may argue exactly as in the proof of \propref{prop:pyruvate}, using the vanishing of $\Gr_\gamma^i\K_s(X)_\Q$ for $i \gg 0$.
\end{proof}

\begin{proof}[Proof of \thmref{thm:mglq}]
  For every finite type $k$-scheme $Y$ with action of an affine algebraic group $H$, we have natural isomorphisms
  \begin{equation}
    \Chom^{\BM,H}(Y; \MGL_\Q) \simeq \prod_{i\ge 1} \Chom^{\BM,H}(Y; \Q)\vb{i}
  \end{equation}
  by \propref{prop:pyruvate MGL} applied to $f : [Y/H] \to BH$.
  Under these isomorphisms, the surjection
  \begin{equation}\label{eq:waffq}
    \begin{multlined}
      \pi_0 \Chom^{\BM,G}(X; \MGL_\Q)\vb{-n}\\
      \twoheadrightarrow \lim_\nu \pi_0 \CBM(X\fibprod^G U_\nu; \MGL_\Q)\vb{-n-d_\nu+g}
      \simeq \Omega^{G,\mrm{HML}}_{n}(X)_\Q
    \end{multlined}
  \end{equation}
  is identified with the product over $i \ge 1$ of the maps
  \begin{equation*}
    \pi_0 \Chom^{\BM,G}(X;\Q^\mot)\vb{-n+i}
    \twoheadrightarrow \lim_\nu \pi_0 \CBM(X\fibprod^G U_\nu;\Q^\mot)\vb{-n+i-d_\nu+g},
  \end{equation*}
  since products commute with $\pi_0$ and $\lim$.
  Each of these maps are bijective by \corref{cor:omniparent}, so the claim follows.
\end{proof}

With integral coefficients, we suspect that \eqref{eq:waff} may not be invertible in general.\footnote{%
  For $X$ smooth projective and $G$ connected and split reductive, we expect that \eqref{eq:waff} is invertible by arguments similar to those in \cite[\S\S 6-7]{KrishnaCompletion}.
}
We propose that the ``correct'' definition\footnote{%
  Of course, one could take this as a definition, but by convention $\Omega^G_*(-)$ should be a ``geometrically'' defined theory as in \cite{LevineMorel}.
  More precisely, for every $n$ there should be geometrically defined sheaves of spectra on quasi-projective $k$-schemes, analogous to the Bloch cycle complexes, whose hypercohomologies compute $\Omega_n(-)$.
  (For smooth schemes $X$, such has been constructed in \cite[Thm.~3.4.1(ii)]{deloop3}, using finite quasi-smooth derived schemes over $X$ as generators.)
  Then the $G$-equivariant versions of these spectra should be defined by the Borel construction, i.e., via a homotopy limit as in \corref{cor:semidiatessaron}.
  Finally, $\Omega^G_n(-)$ should be defined as the $\pi_0$ of this spectrum-level $G$-equivariant bordism theory.
} of $\Omega^G_n(-)$ should satisfy
\begin{equation}
  \Omega^G_n(X) \simeq \pi_0 \Chom^{\BM,G}(X; \MGL)\vb{-n}.
\end{equation}
By the general properties of the construction $\CBM(-; E)$, for $E \in \SH(k)$, $\Omega^G_*(-)$ admits proper push-forwards and quasi-smooth Gysin pull-backs, and satisfies homotopy invariance and the projective bundle formula.
We will now show that it also satisfies the right-exact localization property\footnote{%
  Although it is claimed in \cite[Thm.~20]{HellerMalagonLopez} that the same holds for $\Omega^{G,\mrm{HML}}_*(-)$, there is a well-known gap in their proof.
}.
In \cite{virloc,constack} it is shown that it also satisfies the equivariant concentration and localization theorems.

\begin{thm}\label{thm:unchanted}
  Let $k$ be a perfect field, $G$ an affine algebraic group over $k$, and $X$ an algebraic space of finite type over $k$ with $G$-action.
  Then we have:
  \begin{thmlist}
    \item\label{item:metropolis}
    For every $G$-equivariant closed immersion $i : Z \to X$ with complementary open immersion $j : U \to X$, the localization sequence
    \[
      \begin{multlined}
        \H^{\BM,G}_{2n}(Z; \MGL)(-n)\\\quad\xrightarrow{i_*}
        \H^{\BM,G}_{2n}(X; \MGL)(-n) \xrightarrow{j^*}
        \H^{\BM,G}_{2n}(U; \MGL)(-n) \to 0.
      \end{multlined}
    \]
    is exact.
    \item\label{item:antirattler}
    For every integer $n\in\Z$, the spectrum $\Chom^{\BM,G}(X; \MGL)\vb{-n}$ is connective.
    That is, we have
    \[ \H^{\BM,G}_{s}(X; \MGL)(-r) = 0 \]
    for all $r,s\in\Z$ with $s<2r$.
  \end{thmlist}
\end{thm}

\begin{proof}
  The localization exact triangle
  \[
    \Chom^{\BM,G}(Z; \MGL) \to \Chom^{\BM,G}(X; \MGL) \to \Chom^{\BM,G}(U; \MGL)
  \]
  induces a long exact sequence in $\H^{\BM,G}_*(-; \MGL)(-n)$ for every $n \in \Z$.
  In particular we have the exact sequence
  \[
    \begin{multlined}
      \H^{\BM,G}_{2n}(Z; \MGL)(-n) \xrightarrow{i_*}
      \H^{\BM,G}_{2n}(X; \MGL)(-n) \xrightarrow{j^*}
      \H^{\BM,G}_{2n}(U; \MGL)(-n)\\\xrightarrow{\partial}
      \H^{\BM,G}_{2n-1}(Z; \MGL)(-n).
    \end{multlined}
  \]
  This shows that the second claim implies the first.

  We now demonstrate claim~\itemref{item:antirattler} in the case of $G$ the trivial group.
  If $X$ is a smooth scheme, then by Poincaré duality the claim is equivalent to the vanishing of $\H^q(X; \MGL(p))$ for $q > 2p$, see e.g. \cite[Thm.~B.1]{ChowTstr}.
  In general, the schematic locus defines a dense open $U \sub X$ (see \cite[Tag 06NH]{Stacks}).
  Since the field $k$ is perfect, the smooth locus $V$ of $U$ is a further dense open.
  Using the localization sequence
  \begin{equation*}
    \H^\BM_{s}(Z; \MGL)(-r)
    \to \H^\BM_{s}(X; \MGL)(-r)
    \to \H^\BM_{s}(V; \MGL)(-r),
  \end{equation*}
  where $Z \sub X$ is the reduced closed complement, we conclude by noetherian induction.

  For the case of general $G$, the proof is similar to that of \propref{prop:Gconn}.
  We claim that for every $n\in\Z$ the pro-system
  \begin{equation}\label{eq:spiculated}
    \{ \pi_0 \Chom^{\BM}(X \fibprod^G U_\nu; \MGL)\vb{-n-d_\nu} \}_\nu
  \end{equation}
  satisfies the Mittag--Leffler condition.
  Indeed, by \remref{rem:borelfeatures} the transition map for $\mu>\nu$ is by construction the composite of the restriction map along an open,
  \begin{equation*}
    \pi_0 \Chom^{\BM}(X \fibprod^G U_\mu; \MGL)\vb{-n-d_\mu}
    \to \pi_0 \Chom^{\BM}(X \fibprod^G (U_\nu \times V_{\mu-\nu}); \MGL)\vb{-n-d_\mu}
  \end{equation*}
  and Gysin pull-back along the zero section of a vector bundle,
  \begin{equation*}
    \pi_0 \Chom^{\BM}(X \fibprod^G (U_\nu \times V_{\mu-\nu}); \MGL)\vb{-n-d_\mu}
    \to \pi_0 \Chom^{\BM}(X \fibprod^G U_\nu; \MGL)\vb{-n-d_\nu}.
  \end{equation*}
  By claim~\itemref{item:metropolis} in the case of $G$ trivial, the former is surjective.
  By homotopy invariance, the latter is invertible.
  Thus the pro-system \eqref{eq:spiculated} has surjective transition maps and in particular satisfies the Mittag--Leffler condition.

  Using \corref{cor:semidiatessaron}, we have the canonical surjections
  \begin{equation}\label{eq:ginny}
    \pi_{-1} \Chom^{\BM,G}(X; \MGL)\vb{-n}\\
    \twoheadrightarrow \lim_\nu \pi_{-1} \CBM(X\fibprod^G U_\nu; \MGL)\vb{-n-d_\nu+g}
  \end{equation}
  with kernel $\lim^1_\nu \pi_0 \CBM(X \fibprod^G U_\nu; \MGL)\vb{-n-d_\nu}$.
  The latter vanishes by the Mittag--Leffler condition verified above.
  Hence in that case, Claim~\itemref{item:metropolis} for $G$ trivial implies that the target vanishes, hence so does the source.

  Now, by induction we see that the pro-system $\{\pi_{-s+1} \CBM(X \fibprod^G U_\nu; \MGL)\vb{\ast} \}_\nu$ vanishes for all $s>1$.
  Arguing by the Milnor exact sequence again, we have bijectivity of \eqref{eq:ginny} for all lower homotopy groups as well, and we conclude again by Claim~\itemref{item:metropolis} for $G$ trivial.
  This shows claim~\itemref{item:antirattler}.
\end{proof}

Combining with \thmref{thm:mglq}, we deduce that the theory of \cite{HellerMalagonLopez} satisfies the right-exact localization property with rational coefficients:

\begin{cor}\label{cor:mgllocq}
  Let $k$ be a field of characteristic zero, $G$ an affine algebraic group over $k$, and $X$ a quasi-projective $k$-scheme with linearized $G$-action.
  Then for every $G$-equivariant closed immersion $i : Z \to X$ with complementary open immersion $j : U \to X$, the localization sequence
  \[
    \begin{multlined}
      \Omega^{G,\mrm{HML}}_n(Z)_\Q\xrightarrow{i_*}
      \Omega^{G,\mrm{HML}}_n(X)_\Q \xrightarrow{j^*}
      \Omega^{G,\mrm{HML}}_n(U)_\Q \to 0.
    \end{multlined}
  \]
  is exact.
\end{cor}


\section{Categorification}
\label{sec:cat}

Let $\D : \Sch^\op \to \Pres$ be a Nisnevich sheaf with values in the \inftyCat $\Pres$ of presentable \inftyCats and left adjoint functors.
Denote by $\D^\lisse : \Stk^\op \to \InftyCat$ its lisse extension as in \defnref{defn:uncircumspectly}.
Given a morphism $f : \sX \to \sY$ we denote the induced functor by $f^* = \D(f) : \D(\sY) \to \D(\sX)$, and by $f_* : \D(\sX) \to \D(\sY)$ its right adjoint.

We will assume that $\D$ satisfies the following two properties:
\begin{defnlist}
  \item
  \emph{Local $\A^1$-invariance}: for every $X \in \Sch$, the unit morphism
  \[ \id \to \pi_*\pi^* \]
  is fully faithful, where $\pi : X\times\A^1 \to X$ is the projection.
  In other words, $\pi^* : \D(X) \to \D(X\times\A^1)$ is fully faithful.

  \item\label{item:vibrioid}
  \emph{Smooth base change formula}: for every cartesian square in $\Sch$
  \[\begin{tikzcd}
    X' \ar{r}{g}\ar{d}{u}
    & Y' \ar{d}{v}
    \\
    X \ar{r}{f}
    & Y,
  \end{tikzcd}\]
  the base change transformation
  \[\begin{tikzcd}
    v^* f_*
    \xrightarrow{\mrm{unit}} g_* g^* v^* f_*
    \simeq g_* u^* f^* f_*
    \xrightarrow{\mrm{counit}} g_* u^*
  \end{tikzcd}\]
  is invertible.
\end{defnlist}
For example, we may take $\D = \SH$ (see \secref{sec:sh}) or more generally any topological weave in the sense of \cite{Weaves}.

Fix $S$, $G$, and $\{U_\nu\}_\nu$ as in \notatref{notat:islandlike}.
For every $X\in\Stk_S^G$ we consider the square
\begin{equation}\label{eq:coppet}
  \begin{tikzcd}
    X \fibprod_S U_\nu \ar{r}{p_\nu}\ar{d}{v_\nu}
    & X \ar{d}{u}
    \\
    X \fibprod^G_S U_\nu \ar{r}{q_\nu}
    & {[X/G]}
  \end{tikzcd}
\end{equation}
where $p_\nu$ and $q_\nu$ are the projections, and $u$ and $v_\nu$ are the quotient maps.

\begin{thm}\label{thm:recool}
  For every $\sF \in \D^\lisse([X/G])$, the unit maps induce a canonical isomorphism
  \begin{equation*}
    \sF \to \lim_\nu q_{\nu,*} q_\nu^{*} (\sF)
  \end{equation*}
  in $\D^\lisse([X/G])$.
\end{thm}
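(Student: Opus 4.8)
The plan is to reduce the claim, via the very definition of $\D^\lisse$ as a limit over $\Lis_{[X/G]}$, to a statement about $\D$ on schemes, and then to recognize that statement as an instance of $\tau$-descent combined with $\A^1$-invariance. First I would unwind $\D^\lisse([X/G]) \simeq \lim_{(T,t)} \D(T)$, the limit running over smooth $T \to [X/G]$ with $T$ a scheme. Since limits of $\infty$-categories are computed objectwise, it suffices to fix such a $(T,t)$ and show that $t^*(\sF) \to t^*\bigl(\lim_n q_{n,*}q_n^*\sF\bigr)$ is an isomorphism in $\D(T)$. The right-hand side commutes with the limit over $n$ (a limit of $\infty$-categories, or equivalently pointwise), so the target is $\lim_n t^* q_{n,*} q_n^*(\sF)$. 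Using the smooth base-change formula \itemref{item:vibrioid} applied to the pullback of the square \eqref{eq:coppet} along $t$, I can rewrite $t^* q_{n,*} q_n^*(\sF)$ as $(q_n')_* (q_n')^* t^*(\sF)$, where $q_n' : T \fibprod_{[X/G]} (X\fibprod_S^G U_n) \to T$ is the base change of $q_n$. Thus the whole statement is reduced to: for every scheme $T$ over $[X/G]$, writing $\sG := t^*(\sF) \in \D(T)$, the unit $\sG \to \lim_n (q_n')_* (q_n')^* \sG$ is invertible.

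Now $q_n' : T_n \to T$ is the base change of the smooth surjection $q_n : X\fibprod_S^G U_n \to [X/G]$; in particular each $T_n$ is a scheme, each $q_n'$ is smooth, and — because $U_n/G$ is, for a fixed affine étale chart of $[X/G]$, eventually a scheme admitting $\tau$-local sections over that chart (by condition \itemref{item:endellionite} of \notatref{notat:duces}, as used in \thmref{thm:hemiataxy}) — for $n$ large the map $q_n'$ admits $\tau$-local sections. For such $n$, $\tau$-descent for $\D$ together with the smooth base change formula identifies $(q_n')_*(q_n')^*\sG$ with the totalization of the \v Cech nerve of $q_n'$; this is the standard ``comonadic descent'' computation. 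But the more robust route, and the one I expect the authors take, is to recognize that the tower $\{T_n\}_n$ over $T$ is a base change of the tower $\{U_n\}_n$, hence satisfies the hypotheses of \notatref{notat:duces} over $T$, and to apply \propref{prop:moneyed} (or rather its categorified shadow): the colimit presheaf $\colim_n T_n \to T$ is a local equivalence in $\Fun(\Lis_T^\op, \Anima)$, and $\D$, being an $\A^1$-invariant $\tau$-sheaf, inverts local equivalences. Applying $\D$ to $\colim_n T_n \simeq T$ (in the appropriate localized category) gives $\D(T) \xrightarrow{\sim} \lim_n \D(T_n)$, and chasing $\sG$ through this equivalence yields exactly $\sG \simeq \lim_n (q_n')_*(q_n')^*\sG$, since the right adjoint along $T_n \to T$ followed by restriction is precisely the structure map of the limit.

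The one genuinely delicate point is the interchange of the two limits — the ``external'' limit over $\Lis_{[X/G]}$ defining $\D^\lisse$ and the ``internal'' limit over $n$ — together with the compatibility of smooth base change with that internal limit. Concretely, I must check that the functor $\D^\lisse([X/G]) \to \D(T)$ carries $\lim_n q_{n,*}q_n^*(\sF)$ to $\lim_n (q_n')_*(q_n')^* t^*(\sF)$ coherently in $(T,t)$, i.e.\ that the base-change isomorphisms assemble into a natural equivalence of functors on $\Lis_{[X/G]}$. This is a purely formal but bookkeeping-heavy matter: the smooth base change transformations of \itemref{item:vibrioid} are natural, they commute with limits in $\D$ because $f_*$ is computed pointwise in a limit of presentable categories, and the structure maps of the tower $\{U_n\}_n$ are $G$-equivariant (so the squares \eqref{eq:coppet} for varying $n$ are compatible). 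Once this coherence is in place, everything else is an application of \propref{prop:moneyed} and $\tau$-descent, so I expect the write-up to be short, with the bulk of the work hidden in invoking the pro-approximation machinery already established.
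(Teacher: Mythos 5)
Your proposal and the paper's proof both go through Proposition~\ref{prop:moneyed}, but at different levels, and the route you call ``the more robust route'' has a genuine error while the route you call ``bookkeeping-heavy'' is essentially the one the paper sidesteps entirely by a cleverer choice of presheaf.

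The error in your second route is in the sentence ``Applying $\D$ to $\colim_n T_n \simeq T$ \ldots gives $\D(T) \xrightarrow{\sim} \lim_n \D(T_n)$.'' This would require Proposition~\ref{prop:moneyed} to apply to $\D$ itself as a $\Pres$-valued presheaf, which needs $\D$ to be $\A^1$-invariant in the sense that $\pi^* : \D(X) \to \D(X\times\A^1)$ is an \emph{equivalence}. But the $\A^1$-invariance hypothesis of Section~\ref{sec:cat} only asks that $\pi^*$ be \emph{fully faithful} (i.e.\ that the unit $\id \to \pi_*\pi^*$ be invertible), not essentially surjective. In examples like $\SH$ this distinction is real: $\SH(X) \to \SH(X\times\A^1)$ is fully faithful but not essentially surjective, so $\D$ does \emph{not} invert local equivalences. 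Correspondingly, the categorical statement that would come out — $\D^\lisse([X/G]) \simeq \lim_n \D^\lisse(X\fibprod^G_S U_n)$ — is false in general; Corollary~\ref{cor:estovers} deliberately asserts only fully faithfulness of $q^*$, and Corollary~\ref{cor:soteriologic} would be vacuous if $q^*$ were an equivalence.

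Your first route — evaluate at $(T,t)\in\Lis_{[X/G]}$, commute $t^*$ past the inner limit, apply smooth base change to get $t^*q_{n,*}q_n^*\sF \simeq (q_n')_*(q_n')^*t^*\sF$, then invoke Proposition~\ref{prop:moneyed} on $(V,v)\mapsto v_*v^*\sG$ over $T$ — is conceptually sound but carries real overhead. The base change square you form has an algebraic space $T_n$ (not a scheme) in the corner, so the scheme-level hypothesis \itemref{item:vibrioid} does not apply directly; one would need a descent argument along the lines of Lemma~\ref{lem:dissyllabize} to upgrade it. And the coherence of these base-change identifications as $(T,t)$ varies, which you flag as the delicate point, is indeed nontrivial.

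The paper sidesteps all of this by applying Proposition~\ref{prop:moneyed} not to $\D$ itself and not after restricting to a fixed $T$, but to the $\D^\lisse([X/G])$-valued presheaf $F(T,t) = t_*t^*(\sF)$ on $\Lis_{[X/G]}$. This presheaf is $\A^1$-invariant precisely because the unit $\id\to\pi_*\pi^*$ is invertible on objects (which is what the weak $\A^1$-invariance hypothesis does give), it is a $\tau$-sheaf because $\D$ is, and it is lisse-extended by construction, so $F^\lisse([X/G]) = \sF$ and $F^\lisse(X\fibprod^G_S U_n) = q_{n,*}q_n^*(\sF)$. Proposition~\ref{prop:moneyed} then delivers the pro-isomorphism with no base change computation and no pointwise reduction. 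I'd suggest looking at this trick: it is exactly how the weaker (object-level) $\A^1$-invariance of $\D$ can be exploited even though the category-level $\A^1$-invariance fails.
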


\begin{proof}
  For a fixed $\sF \in \D^\lisse([X/G])$, the presheaf
  $$F : \LisStk_{[X/G]}^\op \to \D^\lisse([X/G])$$
  sending $(T, t : T \to [X/G]) \mapsto t_*t^*(\sF)$ is, by construction, lisse-extended from its restriction to $\Lis_{[X/G]}$.
  Thus the claim follows from \propref{prop:moneyed} applied to $F|_{\Lis_{[X/G]}}$.
\end{proof}

Consider the right Kan extension of $\D^\lisse$ to ind-objects, so that
\[
  \D^\lisse(X\fibprod_S \{U_\nu\}_\nu) \simeq \lim_\nu \D^\lisse(X \fibprod_S U_\nu),
  \quad \D^\lisse(X \fibprod^G_S \{U_\nu\}_\nu) \simeq \lim_\nu \D^\lisse(X \fibprod^G_S U_\nu)
\]
where the transition functors are $*$-inverse image.
We have the induced functors
\begin{align}
  &p^* = (p_\nu^*)_\nu : \D^\lisse(X) \to \D^\lisse(X \fibprod_S \{U_\nu\}_\nu),\nonumber\\
  &q^* = (q_\nu^*)_\nu : \D^\lisse([X/G]) \to \D^\lisse(X \fibprod^G_S \{U_\nu\}_\nu).\label{eq:haggada}
\end{align}

\begin{cor}\label{cor:estovers}
  The functor \eqref{eq:haggada} is fully faithful.
\end{cor}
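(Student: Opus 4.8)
The plan is to deduce full faithfulness of $q^* = (q_n^*)_n$ directly from the unit isomorphism of \thmref{thm:recool}. Recall that $q^*$ lands in $\D^\lisse(X\fibprod^G_S\{U_n\}_n) \simeq \lim_n \D^\lisse(X\fibprod^G_S U_n)$, and that this limit has a right adjoint $q_*$ assembled from the functors $q_{n,*}$: concretely, for a compatible system $\sG = (\sG_n)_n$ one sets $q_*\sG := \lim_n q_{n,*}(\sG_n)$, using that the $q_{n,*}$ commute (up to canonical isomorphism) with the transition functors by the smooth base change formula~\itemref{item:vibrioid} applied to the cartesian squares expressing $U_n$ as an open in $U_{n+1}$ over $[X/G]$. (The transition maps $X\fibprod^G_S U_n \hook X\fibprod^G_S U_{n+1}$ are open immersions, hence smooth, so base change applies.) With this description, full faithfulness of $q^*$ is equivalent to the assertion that the unit $\sF \to q_* q^* \sF$ is an isomorphism for every $\sF \in \D^\lisse([X/G])$.

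First I would identify $q_*q^*\sF$ with $\lim_n q_{n,*}q_n^*\sF$. This is essentially formal: $q^*\sF$ is the system $(q_n^*\sF)_n$, and applying the right adjoint $q_*$ to it gives $\lim_n q_{n,*}(q_n^*\sF)$ by the explicit formula above; one just needs to check that the structure maps of this system are the ones induced by the units, which follows by unwinding the adjunctions. Then \thmref{thm:recool} says precisely that the composite $\sF \to \lim_n q_{n,*}q_n^*\sF \simeq q_*q^*\sF$ is an isomorphism, and this composite is the unit of the adjunction $(q^*, q_*)$. Hence $q^*$ is fully faithful.

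The one point requiring care — and the main obstacle — is the verification that the right adjoint to $q^*$ really is computed termwise as $\lim_n q_{n,*}$, i.e. that the Beck--Chevalley/base change isomorphisms $i^* q_{n+1,*} \simeq q_{n,*} i'^*$ (where $i, i'$ are the relevant open immersions) hold and are coherent enough to define a functor on the limit $\inftyCat$. This is where hypothesis~\itemref{item:vibrioid} enters, and strictly speaking one needs its extension to $\D^\lisse$ along smooth (even open) morphisms of stacks; this holds because $\D^\lisse$ is a $\tau$-sheaf with the six-functor behaviour inherited from $\D$ on $\Sch$, so smooth base change persists to the lisse extension. Granting this, the right adjoint exists and has the stated form, and the rest of the argument is the formal manipulation above. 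I would therefore structure the proof as: (i) recall that $q^*$ admits the right adjoint $q_* = \lim_n q_{n,*}$ by smooth base change for $\D^\lisse$; (ii) observe that the unit $\sF \to q_*q^*\sF$ is identified with the map $\sF \to \lim_n q_{n,*}q_n^*\sF$ of \thmref{thm:recool}; (iii) conclude by \thmref{thm:recool} that this unit is invertible, whence $q^*$ is fully faithful.
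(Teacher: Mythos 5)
Your proof follows exactly the paper's line of argument: identify the right adjoint of $q^*$ as $(\sG_n)_n \mapsto \lim_n q_{n,*}\sG_n$, observe that full faithfulness of $q^*$ is equivalent to invertibility of the unit $\sF \to q_*q^*\sF \simeq \lim_n q_{n,*}q_n^*\sF$, and conclude from Theorem~\ref{thm:recool}.

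However, your justification of the right adjoint's form contains two errors. First, the transition maps $X\fibprod^G_S U_n \to X\fibprod^G_S U_{n+1}$ are \emph{closed} immersions, not open ones: by Notation~\ref{notat:islandlike}, $U_\nu \subset \V_S(\sE)^{\oplus\nu}$ sits inside $\V_S(\sE)^{\oplus(\nu+1)}$ along the vanishing of the last coordinate. Second, and more importantly, no Beck--Chevalley or base change isomorphism is needed, and the squares you gesture at (with $q_n$ and $q_{n+1}$ over the same base $[X/G]$) are not cartesian, so base change would not even apply there. The formula $q_*\sG = \lim_n q_{n,*}\sG_n$ is a purely formal consequence of the description of mapping spaces in a limit $\infty$-category: for $\sF \in \D^\lisse([X/G])$ and $\sG = (\sG_n)_n$ in the limit,
\[
\Maps(q^*\sF, \sG) \simeq \lim_n\Maps(q_n^*\sF, \sG_n) \simeq \lim_n \Maps(\sF, q_{n,*}\sG_n) \simeq \Maps\bigl(\sF, \lim_n q_{n,*}\sG_n\bigr),
\]
where the transition morphisms $q_{n+1,*}\sG_{n+1} \to q_{n,*}\sG_n$ arise from the unit of the adjunction $(i_n^*, i_{n,*})$ along the closed immersion $i_n$ via $q_{n+1,*} \to q_{n+1,*}i_{n,*}i_n^* \simeq q_{n,*}i_n^*$ together with the compatibility $i_n^*\sG_{n+1} \simeq \sG_n$. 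The only input is the strict compatibility $q_{n+1}\circ i_n = q_n$, not a base change formula. Once that detour is removed, your argument is precisely the one in the text.
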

\begin{proof}
  The functor $q^*$ admits as right adjoint $(\sF_\nu) \mapsto \lim_\nu q_{\nu,*} \sF_\nu$, so fully faithfulness amounts to invertibility of the unit map
  \[
    \sF \to \lim_\nu q_{\nu,*} q_\nu^{*} (\sF)
  \]
  for all $\sF \in \D([X/G])$, which is the assertion of \thmref{thm:recool}.
\end{proof}

We say that the group scheme $G$ is \emph{Nisnevich-special} if the quotient morphism $S \twoheadrightarrow [S/G] = BG$ admits Nisnevich-local sections, i.e., if every étale $G$-torsor is Nisnevich-locally trivial.
For example, this includes special group schemes in the sense of Serre such as $\GL_{n,S}$.

\begin{thm}\label{thm:soteriologic}
  If $\D$ satisfies étale descent, or $G$ is Nisnevich-special, then the squares \eqref{eq:coppet} induce a cartesian square of \inftyCats
  \[\begin{tikzcd}
    \D^\lisse([X/G]) \ar{r}{q^*}\ar{d}{u^*}
    & \D^\lisse(X \fibprod^G_S \{U_\nu\}_\nu) \ar{d}{v^*}
    \\
    \D^\lisse(X) \ar{r}{p^*}
    & \D^\lisse(X \fibprod_S \{U_\nu\}_\nu).
  \end{tikzcd}\]
\end{thm}

We will prove \thmref{thm:soteriologic} at the end of this section.

\begin{rem}
  If $\D$ only satisfies Nisnevich descent and $G$ is not Nisnevich-special, then one still has a cartesian square of \inftyCats
  \[\begin{tikzcd}
    \D^\lisse([X/G]) \ar{r}{q^*}\ar{d}{u^*}
    & \D^\lisse(X \fibprod^G_S \{U_\nu\}_\nu) \ar{d}{v^*}
    \\
    \D^\lisse(Y) \ar{r}{p^*}
    & \D^\lisse(Y' \fibprod^G_S \{U_\nu\}_\nu),
  \end{tikzcd}\]
  where $u : Y \twoheadrightarrow [X/G]$ is any smooth morphism with Nisnevich-local sections and $Y' \to Y$ is the $G$-torsor classified by $Y \to [X/G] \to BG$, since $u^*$ is conservative in this case.
\end{rem}

\begin{lem}\label{lem:dissyllabize}
  With notation as above, suppose that $X$ is a scheme.
  Then the cartesian square
  \[ \begin{tikzcd}
    X \fibprod_S \{U_\nu\}_\nu \ar{r}{p}\ar{d}{v}
    & X \ar{d}{u}
    \\
    X \fibprod^G_S \{U_\nu\}_\nu \ar{r}{q}
    & {[X/G]}
  \end{tikzcd} \]
  satisfies the smooth base change formula.
  That is, the natural transformation
  \[
    u^* q_*
    \xrightarrow{\mrm{unit}} p_* p^* u^* q_*
    \simeq p_* v^* q^* q_*
    \xrightarrow{\mrm{counit}} p_*v^* 
  \]
  is invertible.
\end{lem}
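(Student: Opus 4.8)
The plan is to reduce the statement to the finite‑level squares \eqref{eq:coppet} and then pass to the limit, the crucial point being that $u^*$ commutes with the relevant cofiltered limit.

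First I would unwind the functors on the ind‑stacks: for $\sG = (\sG_n)_n \in \D^\lisse(X\fibprod^G_S\{U_n\}_n) = \lim_n \D^\lisse(X\fibprod^G_S U_n)$, the right‑adjoint formulas give $q_*\sG \simeq \lim_n q_{n,*}\sG_n$ and $p_*v^*\sG \simeq \lim_n p_{n,*}v_n^*\sG_n$, and the natural transformation of the lemma is identified with the composite of the canonical comparison $u^*\bigl(\lim_n q_{n,*}\sG_n\bigr) \to \lim_n u^*(q_{n,*}\sG_n)$ with the limit over $n$ of the base‑change maps $u^*q_{n,*}\sG_n \to p_{n,*}v_n^*\sG_n$ attached to the cartesian squares \eqref{eq:coppet}. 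So it suffices to prove (i) the comparison map is invertible, and (ii) each finite‑level base‑change map is invertible.

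For (i): since $u : X \to [X/G]$ is a $G$‑torsor it is smooth, so $u^* : \D^\lisse([X/G]) \to \D^\lisse(X)$ admits a left adjoint $u_\#$ (the six operations persist from $\D$ to the lisse extension $\D^\lisse$, see \cite{sixstack,KhanHomology}; in particular this is automatic for $\D=\SH$ or $\D$ a constructible $\infty$-category). A functor with a left adjoint preserves all limits, so $u^*$ commutes with $\lim_n q_{n,*}\sG_n$ and (i) holds. (If one wants to use only $\A^1$-invariance and smooth base change, one instead argues that the pro‑object $\{q_{n,*}\sG_n\}_n$ in $\Pro(\D^\lisse([X/G]))$ is essentially constant — via \propref{prop:moneyed} applied to the presheaf on $\Lis_{[X/G]}$ sending $(T,t)$ to the pushforward to $[X/G]$ of the restriction of $\sG$ to the $T$-pullback of the Borel ind‑stack — an essentially constant pro‑object having its limit preserved by every functor.) For (ii): the squares \eqref{eq:coppet} have $u$ smooth and $X$ a scheme, so $u^*q_{n,*} \to p_{n,*}v_n^*$ is the instance of smooth base change for $\D^\lisse$ attached to such a square; I would deduce it from the smooth base change property \itemref{item:vibrioid} of $\D$ on schemes by $\tau$-descent, using \lemref{lem:coccus} to compute $\D^\lisse([X/G])$ from a smooth atlas by a scheme with $\tau$-local sections (e.g.\ $X$ itself when $\tau=\et$ or $G$ is Nisnevich‑special, and one exists in general by \thmref{thm:woolert}), base‑changing \eqref{eq:coppet} over the \v{C}ech nerve of the atlas and over auxiliary scheme atlases of the algebraic spaces $X\fibprod^G_S U_n$, and reducing to cartesian squares of schemes. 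A final routine check identifies the composite of the two identifications with the base‑change transformation of the statement.

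The main obstacle is (i): the reduction to the finite‑level squares does not by itself prove the lemma, because $u^*$ is a priori only a left adjoint, and one genuinely needs either the smoothness of the torsor $u$ (producing $u_\#$, hence making $u^*$ continuous) or the pro‑approximation of \propref{prop:moneyed}. Step (ii) is the standard descent reduction to schemes and presents no difficulty of principle.
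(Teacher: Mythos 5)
Your architecture is close to the paper's, but there are two points where the details diverge meaningfully.

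For the passage to the limit (your step (i)), the paper does not appeal to the existence of a left adjoint $u_\#$, nor to any pro-approximation at this stage. Instead, after writing $\D^\lisse([X/G])$ and each $\D^\lisse(X\fibprod^G_S U_n)$ as totalizations over the \v{C}ech nerves $X_\bullet$ and $Y_\bullet = X_\bullet \fibprod_S U_n$ of $u$ and $v_n$, it invokes \cite[Vol.~I, Pt.~I, Chap.~1, 2.6.4]{GaitsgoryRozenblyum} to conclude in a single step that the right adjoint $q_*$ and the associated base change transformation are computed levelwise in the \v{C}ech direction; at level $m=0$ one reads off exactly $u^* q_* \to p_* v^*$. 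This simultaneously resolves the interchange of $u^*$ with the limit and reduces to schemes, while staying strictly within the stated hypotheses on $\D$ ($\A^1$-invariance plus smooth base change). Your option~1 is sound but invokes $\#$-pushforward for smooth morphisms, which is extra structure not assumed in this section (though available for $\SH$ and constructible $\infty$-categories); your option~2 is not correctly set up, since the object $\sG=(\sG_n)_n$ lives on the ind-stack and is not of the form $t^*\sF$ for $\sF\in\D^\lisse([X/G])$, so \propref{prop:moneyed} and \thmref{thm:recool} do not directly apply to the pro-object $\{q_{n,*}\sG_n\}_n$.

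For your step (ii), the auxiliary scheme atlases of the algebraic spaces $X\fibprod^G_S U_n$ are unnecessary: the terms $X_m$ of the \v{C}ech nerve of $u$ are schemes (e.g.\ $X_1\simeq X\fibprod_S G$), so the base changes $Y_{n,m}=X_m\fibprod_S U_n$ are schemes as well, and the only descent needed is along $u$. This is what makes the paper's single invocation of the Gaitsgory--Rozenblyum lemma immediately land on cartesian squares of schemes, to which hypothesis \itemref{item:vibrioid} applies directly.
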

\begin{proof}
  This is the limit over $n$ of the natural transformations
  \[
    u^* q_{\nu,*}
    \to p_{\nu,*} v_\nu^*,
  \]
  associated to the squares \eqref{eq:coppet}.
  Using descent for the \v{C}ech nerve of $u : X \twoheadrightarrow [X/G]$ and its base change $v_\nu$, which we denote $X_\bullet$ and $Y_\bullet$ respectively, \cite[Chap.~1, 2.6.4]{GaitsgoryRozenblyum} implies that this is map is in turn the limit of the corresponding natural transformations for all the squares
  \[
    \begin{tikzcd}
      Y_{m+1} \ar{r}{p_m}\ar{d}{d^i}
      & X_{m+1} \ar{d}{d^i}
      \\
      Y_m \ar{r}{q_m}
      & X_m
    \end{tikzcd}
  \]
  where the horizontal arrows are base changed from $q$ and $p$ and the vertical arrows $d^i$ are the face maps (for $0\le i\le m$).
  By the smooth base change formula for schemes \itemref{item:vibrioid}, these are invertible for all $m$ and all $i$.
\end{proof}

\begin{lem}\label{lem:superremuneration}
  Suppose given a commutative square of \inftyCats
  \[\begin{tikzcd}
    \cC \ar{r}{q^*}\ar{d}{u^*}
    & \cC' \ar{d}{v^*}
    \\
    \cD \ar{r}{p^*}
    & \cD'
  \end{tikzcd}\]
  where $p^*$ and $q^*$ are fully faithful with respective right adjoints $p_*$ and $q_*$, the base change transformation
  \[
    u^* q_*
    \xrightarrow{\mrm{unit}} p_* p^* u^* q_*
    \simeq p_* v^* q^* q_*
    \xrightarrow{\mrm{counit}} p_*v^*
  \]
  is invertible, and $v^*$ is conservative.
  Then the essential image of $q^*$ is spanned by objects $c' \in \cC'$ for which $v^*(c')$ belongs to the essential image of $p^*$.
\end{lem}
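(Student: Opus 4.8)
The plan is to characterize the essential image of the fully faithful functor $q^*$ by means of its counit. Since $q^*$ is fully faithful with right adjoint $q_*$, an object $c' \in \sC'$ lies in the essential image of $q^*$ precisely when the counit $q^*q_*c' \to c'$ is an equivalence; likewise an object $d' \in \sD'$ lies in the essential image of $p^*$ precisely when $p^*p_*d' \to d'$ is an equivalence. One inclusion is then immediate: if $c' \simeq q^*(c)$, then by commutativity of the square $v^*(c') \simeq v^*q^*(c) \simeq p^*u^*(c)$ lies in the essential image of $p^*$.

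For the converse, suppose $v^*(c')$ lies in the essential image of $p^*$, and let $\varepsilon \colon q^*q_*c' \to c'$ denote the counit. Since $v^*$ is conservative, it is enough to show that $v^*(\varepsilon) \colon v^*q^*q_*c' \to v^*c'$ is an equivalence. Here I would use the commutativity isomorphism $v^*q^* \simeq p^*u^*$ together with the hypothesis that the base change transformation $u^*q_* \simeq p_*v^*$ is invertible to obtain a chain of equivalences
\[
v^*q^*q_*c' \;\simeq\; p^*u^*q_*c' \;\simeq\; p^*p_*v^*c',
\]
the second one coming from applying $p^*$ to the base change isomorphism. The crucial point is that, under this identification, $v^*(\varepsilon)$ corresponds to the counit $p^*p_*v^*c' \to v^*c'$ of the adjunction $(p^*,p_*)$. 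Granting this, the assumption that $v^*c'$ lies in the essential image of $p^*$ makes that counit an equivalence, hence $v^*(\varepsilon)$ is an equivalence, and therefore so is $\varepsilon$, which finishes the proof.

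The step I expect to be the main obstacle is this last identification. It is purely formal, but it is where the precise shape of the base change transformation enters: since $u^*q_* \to p_*v^*$ is defined as the composite $u^*q_* \to p_*p^*u^*q_* \simeq p_*v^*q^*q_* \to p_*v^*$ built from the unit of $(p^*,p_*)$ and the counit of $(q^*,q_*)$, an application of the triangle identities shows that its $(p^*,p_*)$-adjunct $p^*u^*q_* \to v^*$ is exactly the composite $p^*u^*q_* \simeq v^*q^*q_* \xrightarrow{v^*(\varepsilon)} v^*$. I would either carry out this two-categorical diagram chase explicitly (tracking units, counits and the two triangle identities) or, in the $\infty$-categorical setting, reduce it to the corresponding $1$-categorical statement by testing on mapping spaces; either way it amounts to bookkeeping rather than a genuine difficulty.
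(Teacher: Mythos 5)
Your proof is correct and follows essentially the same route as the paper: characterize the essential image of the fully faithful $q^*$ by invertibility of the counit $q^*q_*c' \to c'$, use conservativity of $v^*$, and identify $v^*$ of that counit with the $(p^*,p_*)$-counit at $v^*(c')$ via the base change isomorphism. The ``main obstacle'' you flag---that the $(p^*,p_*)$-adjunct of the base change transformation is $v^*(\varepsilon_q)$ precomposed with the commutativity isomorphism---is exactly the mate correspondence, and the paper uses the same identification implicitly without spelling out the triangle-identity check.
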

\begin{proof}
  Note that an object $c' \in \cC'$ belongs to the essential image of $q^*$ if and only if the counit $q^*q_*(c') \to c'$ is invertible.
  Indeed, the condition is clearly sufficient.
  Conversely, suppose $c' \simeq q^*(c)$ for an object $c \in \cC$.
  By the adjunction identities, the composite
  \[
    q^*(c)
    \xrightarrow{\mrm{unit}} q^*q_*q^*(c)
    \xrightarrow{\mrm{counit}} q^*(c)
  \]
  is the identity.
  Since $q^*$ is fully faithful, the first arrow is invertible.
  It follows that the second arrow is also invertible.

  Now since $v^*$ is conservative, invertibility of the counit $q^*q_*(c') \to c'$ is equivalent to invertibility of
  \[
    \mrm{counit}:
    p^*p_*v^*(c')
    \simeq p^*u^*q_*(c')
    \simeq v^*q^*q_*(c')
    \xrightarrow{\mrm{counit}} v^*(c')
  \]
  where we have used the base change isomorphism.
  As in the first paragraph, since $p^*$ is fully faithful this is equivalent to the condition that $v^*(c')$ belongs to the essential image of $p^*$.
\end{proof}

\begin{proof}[Proof of \thmref{thm:soteriologic}]
  Given $(T,t) \in \Lis_{[X/G]}$, we may form the base change of the squares \eqref{eq:coppet} along $t : T \to [X/G]$ to get
  \[ \begin{tikzcd}
    T' \fibprod_S \{U_\nu\}_\nu \ar{r}{p_T}\ar{d}{v_T}
    & T' \ar{d}{u_T}
    \\
    T \fibprod_{BG} \{U_\nu/G\}_\nu \ar{r}{q_T}
    & T
  \end{tikzcd} \]
  where $T' \to T \simeq [T'/G]$ is the $G$-torsor classified by $T \to [X/G] \to BG$.
  By definition of $\D^\lisse$, the square in question is the limit over $(T,t)$ of the squares
  \[\begin{tikzcd}
    \D^\lisse(T) \ar{r}{q_T^*}\ar{d}{u_T^*}
    & \D^\lisse(T \fibprod_{BG} \{U_\nu/G\}_\nu) \ar{d}{v^*}
    \\
    \D^\lisse(T') \ar{r}{p^*}
    & \D^\lisse(T' \fibprod_S \{U_\nu\}_\nu).
  \end{tikzcd}\]
  We may therefore replace $X$ by $T'$ and thereby assume that $X$ is a scheme.

  By \corref{cor:estovers} the upper horizontal arrow is fully faithful.
  The same holds for the lower horizontal arrow (note that $\{U_\nu\}_\nu$ also serves as a Borel construction for the trivial group).
  This implies that the square is cartesian on mapping spaces.
  Essential surjectivity of the functor
  \[
    \D^\lisse([X/G])
    \to \D^\lisse(X) \fibprod_{\D^\lisse(X \fibprod_S \{U_\nu\}_\nu)} \D^\lisse(X \fibprod^G_S \{U_\nu\}_\nu)
  \]
  then follows from \lemref{lem:superremuneration} in view of the base change formula $u^* q_* \simeq p_* v^*$ (\lemref{lem:dissyllabize}) and the conservativity of $v^*$ (since $u$ and hence $v$ admits Nisnevich-local sections).
\end{proof}



\bibliographystyle{halphanum}

Institute of Mathematics, Academia Sinica, 10617 Taipei, Taiwan

Max-Planck-Institut für Mathematik, Vivatsgasse 7, 53111 Bonn, Germany

\end{document}